\documentclass[12pt,reqno]{amsart}

\usepackage[vmargin=1in,hmargin=1.2in]{geometry}
\usepackage{graphicx, latexsym, graphics}
\usepackage{amsfonts, amssymb, amsmath, amsthm, mathtools, mathrsfs}
\usepackage{algorithmic, algorithm}

\newcommand{\NN}{\mathbb N}

\newcommand{\CC}{\mathbb C}
\newcommand{\RR}{\mathbb R}
\newcommand{\ZZ}{\mathbb Z}

\newcommand{\EE}{\mathcal E}
\newcommand{\DD}{\mathcal D}
\newcommand{\SSS}{\mathcal S}

\newcommand{\XX}{\mathcal X}

\newcommand{\supp}{\operatorname{supp}}

\theoremstyle{plain}
\newtheorem{theorem}{Theorem}[section]
\newtheorem{proposition}[theorem]{Proposition}
\newtheorem{lemma}[theorem]{Lemma}

\theoremstyle{remark}
\newtheorem{remark}[theorem]{Remark}

\theoremstyle{definition}

\allowdisplaybreaks

\numberwithin{equation}{section}

\begin{document}

\author[S. Pilipovi\' c]{Stevan Pilipovi\' c}
\address{Department of Mathematics and Informatics,
University of Novi Sad, Trg Dositeja Obradovi\'{c}a 4, 21000 Novi Sad, Serbia}
\email{stevan.pilipovic@dmi.uns.ac.rs}

\author[B. Prangoski]{Bojan Prangoski}
\thanks{The work of S. Pilipovi\'c and B. Prangoski was partially supported by the bilateral project ``Microlocal analysis and applications'' funded by the Macedonian Academy of Sciences and Arts and the Serbian Academy of Sciences and Arts.}
\address{Department of Mathematics, Faculty of Mechanical
Engineering-Skopje, Ss. Cyril and Methodius University in Skopje, Karposh 2 b.b., 1000 Skopje, Macedonia}
\email{bprangoski@yahoo.com}

\author[S. Tuti\' c]{Stefan Tuti\' c}
\thanks{The work of S.Tuti\' c was partially supported by the Science Fund of the Republic of Serbia, $\#$GRANT No 2727, \emph{Global and local analysis of operators and distributions} - GOALS. The author also acknowledges the support of the Ministry of Science, Technological Development and Innovation of the Republic of Serbia (Grants No. 451-03-33/2026-03/ 200125 $\&$ 451-03-34/2026-03/ 200125).}
\address{Department of Mathematics and Informatics,
University of Novi Sad, Trg Dositeja Obradovi\'{c}a 4, 21000 Novi Sad, Serbia}
\email{stefan.tutic@dmi.uns.ac.rs}

\title{On the pullback on spaces of distributions with prescribed microregularity with respect to a general Banach space}

\frenchspacing

\begin{abstract}
We consider the space $\DD'^E_L(U)$ of all distributions on the open set $U$ whose wave front set measured with respect to a Banach space $E$ lies in a closed conic subset $L$ of $U\times(\RR^n\backslash\{0\})$. The Banach space $E$ only satisfies mild technical assumptions. We show that the pullback by a diffeomorphism $f:O\rightarrow U$ is a topological isomorphism $f^*:\DD'^E_L(U)\rightarrow\DD'^E_{f^*L}(O)$. In the case when $E$ is the $L^p$-Sobolev space $W^{r,p}(\RR^n)$ of order $r\in\RR$, we study the pullback by a smooth map $f:O\rightarrow U$ of constant rank.
\end{abstract}

\keywords{Translation-modulation invariant Banach spaces of tempered distributions, Sobolev spaces, $E-$wave front set, pullback}

\maketitle

\section{Introduction}

Given a smooth map $f:O\rightarrow U$ between the open sets $O$ and $U$ in $\RR^m$ and $\RR^n$ respectively, the pullback by $f$ is the continuous linear operator $f^*:\mathcal{C}^{\infty}(U)\rightarrow \mathcal{C}^{\infty}(O)$, $f^*u:=u\circ f$. In \cite[Theorem 2.5.11']{hormander} and \cite[Theorem 8.2.4, p. 263]{hor}, H\"ormander showed that $f^*$ can be extended to distributions $u\in\DD'(U)$ as long as their $\mathcal{C}^{\infty}$-wave front set $WF(u)$ does not intersect the so-called set of normals $\mathcal{N}_f$ of $f$ (see \eqref{nor-toa-mapinskl} below). He considered the space $\DD'_L(U):=\{u\in\DD'(U)\,|\, WF(u)\subseteq L\}$ with $L$ a closed conic subset of $U\times(\RR^n\backslash\{0\})$ (the cotangent bundle of $U$ without the zero section) and showed that $f^*$ uniquely extends to a well-defined and sequentially continuous mapping $f^*:\DD'_L(U)\rightarrow \DD'_{f^*L}(O)$ when $\mathcal{N}_f\cap L=\emptyset$; $\DD'_L(U)$ is equipped with an appropriate locally convex topology. Later, in \cite{D1,DB}, the authors introduced a stronger topology on $\DD'_L(U)$ and, among other things, they showed that the H\"ormander extension $f^*:\DD'_L(U)\rightarrow \DD'_{f^*L}(O)$ is in fact continuous with respect to this new topology. Recently, in \cite[Theorem 3.21]{PP}, a refinement of the H\"ormander result was shown: assuming $\mathcal{N}_f\cap L=\emptyset$, the pullback extends to a well-defined and continuous map $f^*:\DD'^{r_2}_L(U)\rightarrow \DD'^{r_1}_{f^*L}(O)$ when $r_2-r_1>n/2$ and $r_2>n/2$ where $\DD'^r_L(U)$ is the space of all distributions $u\in\DD'(U)$ whose $L^2$-Sobolev wave front set $WF^r(u)$ of order $r\in\RR$ is contained in $L$ (this is a refinement of the H\"ormander result since $\DD'_L(U)=\bigcap_{r>0}\DD'^r_L(U)$). Furthermore, \cite[Theorem 3.21]{PP} also considers the case when $f$ has a constant rank $k\in\ZZ_+$ and in this case one can be more precise with the bounds on $r_2$ and $r_1$, namely $r_2-r_1\geq(n-k)/2$ and $r_2> (n-k)/2$; in \cite[Appendix A]{PP} it is shown that these conditions on $r_2$ and $r_1$ are essentially optimal. A natural problem is to extend this result to the corresponding spaces defined via the $L^p$-Sobolev wave front sets. In this article, we address this problem. We first define and study the space $\DD'^E_L(U)$ consisting of the distribution whose wave front set measured with respect to a Banach space $E$ is contained in $L$; here $E$ satisfies certain mild technical assumptions. When $E$ is the $L^2$-Sobolev space $H^r(\RR^n)$, of order $r\in\RR$, one recovers $\DD'^r_L(U)$ while by taking $E$ to be the $L^p$-Sobolev space $W^{r,p}(\RR^n)$, $1<p<\infty$, of order $r\in\RR$ we obtain the space to which we wish to extend the pullback by $f$. Of course, one can consider other special cases of $E$; e.g. $E$ can be the H\"older spaces of non integer order or the Besov spaces in which case $\DD'^E_L(U)$ contains the distributions whose Besov wave front set (studied in \cite{dap-r-scl}) is contained in $L$. The main result of the article is Theorem \ref{the-for-pulbknosklskhtrk}. Here we specialise $E=W^{r,p}(\RR^n)$, $1<p<\infty$, and denoting $\DD'^E_L(U)$ by $\DD'^{r,p}_L(U)$, we show that the pullback by a map of constant rank $k\in\ZZ_+$ extends to a well-defined and continuous map $f^*:\DD'^{r_2,p}_L(U)\rightarrow \DD'^{r_1,p}_{f^*L}(O)$ when $\mathcal{N}_f\cap L=\emptyset$, $r_2-r_1>(n-k)/p$ and $r_2>(n-k)/p$ which is in-line with the conditions on $r_2$ and $r_1$ in the $L^2$-Sobolev case in \cite[Theorem 3.21]{PP}. We point out that a significant difficulty which appears in the proof of Theorem \ref{the-for-pulbknosklskhtrk} is the fact that $\mathcal{F}L^p(\RR^n)$ is not solid when $p\neq 2$ (cf. \cite[Theorem (b)]{fefferman}) unlike the case $p=2$ when $\mathcal{F}L^2(\RR^n)=L^2(\RR^n)$. The most useful applications of Theorem \ref{the-for-pulbknosklskhtrk} are when $f$ is either an imbedding of or a projection on a lower dimensional hyperplane in which case $f^*$ is restricting distributions to a lower dimensional hyperplane or extending distributions to an open set in higher dimension respectively (see the comment at the very end of the article). Finally, it is likely that Theorem \ref{the-for-pulbknosklskhtrk} may be employed in extending the (pointwise) product to distributions as in \cite[Theorem 8.2.10, p. 267]{hor} and in \cite{Tutic}.

\section{Preliminaries}

We fix the constants in the Fourier transform as $\mathcal{F}f(\xi):=\int_{\RR^n}e^{-ix\xi}f(x)dx$, $f\in L^1(\RR^n)$. We write $\check{f}(x)=f(-x)$ for the reflection operator. Throughout the article, $B(x,r)$ stands for the open ball with centre at $x$ and radius $r>0$. For any $A\subseteq \RR^n$, we denote by $\mathbf{1}_A$ the function which equals $1$ on $A$ and $0$ on $\RR^n\backslash A$. As standard, $\langle x\rangle:=(1+|x|^2)^{1/2}$, $x\in\RR^n$. We denote $\RR_+:=\{\lambda\in\RR\,|\, \lambda>0\}$ and, for any $B\subseteq \RR^n$, $\RR_+B$ stands for the cone $\{\lambda x\in\RR^n\,|\, \lambda\in\RR_+,\, x\in B\}$. The cotangent bundle without the zero section of the open set $U\subseteq \RR^n$ is canonically identified with $U\times (\RR^n\backslash\{0\})$. A set $L\subseteq U\times (\RR^n\backslash\{0\})$ is said to be conic if $(x,\xi)\in L$ implies $(x,\lambda\xi)\in L$, $\lambda>0$ (i.e. it is a cone with respect to the second variable for each fixed $x\in U$). For such $L$, we denote $L^c:=(U\times (\RR^n\backslash\{0\}))\backslash L$. Given two open sets $O$ and $U$ in $\RR^m$ and $\RR^n$ respectively and a $\mathcal{C}^{\infty}$ map $f:O\rightarrow U$, we denote by $f'(x)$ the derivative of $f$ at $x\in O$. Following H\"ormander \cite{hor}, we denote
\begin{equation}\label{nor-toa-mapinskl}
\mathcal{N}_f:=\{(f(x),\eta)\in U\times \RR^n\,|\, x\in O,\,{}^tf'(x)\eta=0\}
\end{equation}
and, given a closed conic subset $L$ of $U\times(\RR^n\backslash\{0\})$ which satisfies $L\cap \mathcal{N}_f=\emptyset$,
\begin{equation*}
f^*L:=\{(x,{}^tf'(x)\eta)\in O\times \RR^m\,|\, (f(x),\eta)\in L\}
\end{equation*}
is a closed conic subset of $O\times(\RR^m\backslash\{0\})$. When $m=n$, we set $|f'|(x):=|\det f'(x)|$, $x\in O$. If $f$ is a diffeomorphism, the pullback $f^*:\mathcal{C}^{\infty}(U)\rightarrow\mathcal{C}^{\infty}(O)$, $f^*\varphi:=\varphi\circ f$, uniquely extends to the well-defined and continuous map
\begin{equation*}
f^*:\DD'(U)\rightarrow \DD'(O),\quad \langle f^*u,\varphi\rangle:=\langle u,|f^{-1\, '}|\varphi\circ f^{-1}\rangle,\,\, u\in\DD'(U),\, \varphi\in\DD(O),
\end{equation*}
and the latter restricts to a continuous map $f^*:\EE'(U)\rightarrow\EE'(O)$. \\
\indent For $0\leq \rho\leq1$ and $r\in\RR$, we denote by $S^r_{\rho,0}(\RR^{2n})$ the H\"ormander space of symbols on $\RR^{2n}$ \cite{hormander,hor2}: it consists of all $a\in\mathcal{C}^{\infty}(\RR^{2n})$ satisfying $\sup_{x,\xi\in\RR^n}\langle\xi\rangle^{-r+\rho|\alpha|}|\partial^{\alpha}_{\xi}\partial^{\beta}_xa(x,\xi)|<\infty$, $\alpha,\beta\in\NN^n$. As standard, we denote $S^{-\infty}(\RR^{2n}):=\bigcap_{r\in\RR}S^r_{\rho,0}(\RR^{2n})$; clearly, the definition is independent of $\rho\in[0,1]$. The pseudo-differential operator with symbol $a\in S^r_{\rho,0}(\RR^{2n})$ is defined as
\begin{equation*}
a(x,D)\varphi(x):=\frac{1}{(2\pi)^n}\int_{\RR^n}e^{i x\xi}a(x,\xi)\mathcal{F}\varphi(\xi)d\xi,\quad \varphi\in\SSS(\RR^n).
\end{equation*}
It is a continuous operator on $\SSS(\RR^n)$, it extends to a continuous operator on $\SSS'(\RR^n)$ and, when $r=0$, $a(x,D)$ is also a continuous operator on $L^2(\RR^n)$; we refer to \cite{hor2} for its further properties. We point out that if $a\in S^0_{1,0}(\RR^{2n})$, then $a(x,D)$ is also a continuous operator on $L^p(\RR^n)$, $1<p<\infty$. When $a\in S^0_{0,0}(\RR^{2n})$ does not depend on $x$, i.e. $a(x,\xi)=a(\xi)$, the $\Psi$DO is a Fourier multiplier and we write $a(D)$ instead of $a(x,D)$; notice that $a(D)\varphi=\mathcal{F}^{-1}(a)*\varphi$, $\varphi\in\SSS(\RR^n)$. We denote by $W^{r,p}(\RR^n)$, $1<p<\infty$, the $L^p$-Sobolev space of order $r\in\RR$, i.e. $W^{r,p}(\RR^n):=\{u\in\SSS'(\RR^n)\,|\, \mathcal{F}^{-1}(\langle\cdot\rangle^r \mathcal{F}u)\in L^p(\RR^n)\}$, and we point out that the $\Psi$DOs with symbols in $S^0_{1,0}(\RR^{2n})$ are continuous operators on $W^{r,p}(\RR^n)$.\\
\indent For any measurable $w:\RR^n\rightarrow(0,\infty)$ which satisfies $C^{-1}\langle x\rangle^{-\tau}\leq w(x)\leq C\langle x\rangle^{\tau}$, a.a. $x$, for some $C,\tau> 0$, we denote by $L^p_w(\RR^n)$, $1\leq p\leq \infty$, the weighted $L^p$-space $L^p_w(\RR^n):=\{f\in L^1_{\operatorname{loc}}(\RR^n)\,|\, wf\in L^p(\RR^n)\}$ with norm $\|f\|_{L^p_w(\RR^n)}:=\|wf\|_{L^p(\RR^n)}$.\\
\indent A function $\psi:\RR^n\rightarrow \CC$ is said to be positively homogeneous of order $0$ away from the origin if there is $R>0$ such that $\psi(\lambda x)=\psi(x)$, $|x|>R$, $\lambda>1$. We denote by $\XX(\RR^n)$ the space of smooth positively homogeneous functions of order $0$ away from the origin. If $\psi\in\XX(\RR^m)$, then $\langle\cdot\rangle^{|\alpha|} \partial^{\alpha}\psi\in L^{\infty}(\RR^n)$, $\alpha\in\NN^n$. Hence, $(x,\xi)\mapsto \psi(\xi)$ belongs to $S^0_{1,0}(\RR^{2n})$ and consequently $\psi(D)$ is continuous on $L^p(\RR^n)$, $1<p<\infty$.\\
\indent When $X$ and $Y$ are locally convex spaces (in short: l.c.s.), we denote by $\mathcal{L}(X,Y)$ the space of continuous linear operators from $X$ into $Y$. We employ the notation $\mathcal{L}_b(X,Y)$ for the space $\mathcal{L}(X,Y)$ equipped with the topology of uniform convergence on all bounded sets while $\mathcal{L}_{\sigma}(X,Y)$ will denote that $\mathcal{L}(X,Y)$ is equipped with the topology of simple (pointwise) convergence. When $X=Y$, we simply write $\mathcal{L}(X)$, $\mathcal{L}_b(X)$ and $\mathcal{L}_{\sigma}(X)$ instead.

\subsection{Wave front sets with respect to a class of Banach spaces of distributions}

We recall the following class of Banach spaces of distributions from \cite{DPPV-TMIB}; throughout the section, $T_x$, $x\in\RR^n$, stands for the operator of translation $T_xf:=f(\cdot-x)$ and $M_{\xi}$, $\xi\in\RR^n$, stands for the operator of modulation $M_{\xi}f:=e^{i\xi\,\cdot}f$. The Banach space $E$ with norm $\|\cdot\|_E$ is said to be a \textit{translation-modulation invariant Banach space of distributions} (abbreviated as (TMIB)-space) if it satisfies the following conditions:
\begin{itemize}
\item[$(i)$] $\SSS(\RR^n)\subseteq E\subseteq \SSS'(\RR^n)$ with continuous and dense inclusions;
\item[$(ii)$] $T_x\in\mathcal{L}(E)$, $x\in\RR^n$, and $M_{\xi}\in\mathcal{L}(E)$, $\xi\in\RR^n$;
\item[$(iii)$] there are $\tau,C>0$ such that
\begin{equation*}
\omega_E(x):=\|T_x\|_{\mathcal{L}_b(E)}\leq C(1+|x|)^{\tau}\quad \mbox{and}\quad \nu_E(\xi):=\|M_{-\xi}\|_{\mathcal{L}_b(E)}\leq C(1+|\xi|)^{\tau}.
\end{equation*}
\end{itemize}
We recall here the properties of these spaces that we employ throughout the article and we defer to \cite{DPPV-TMIB} for the complete account. The functions $\omega_E$ and $\nu_E$ defined in $(iii)$ are submultiplicative and Borel measurable. The convolution mapping $*:\SSS(\RR^n)\times\SSS(\RR^n)\rightarrow \SSS(\RR^n)$ and the multiplication mapping $\cdot:\SSS(\RR^n)\times\SSS(\RR^n)\rightarrow\SSS(\RR^n)$ uniquely extend to the following continuous bilinear mappings:
\begin{align}
&*:L^1_{\omega_E}(\RR^n)\times E\rightarrow E,\quad \|f*e\|_E\leq \|f\|_{L^1_{\omega_E}(\RR^n)}\|e\|_E,\,\, f\in L^1_{\omega_E}(\RR^n),\, e\in E,\label{con-module-ine}\\
&\cdot:\mathcal{F}L^1_{\nu_E}(\RR^n)\times E\rightarrow E,\quad \|g\cdot e\|_E\leq \|g\|_{\mathcal{F}L^1_{\nu_E}(\RR^n)}\|e\|_E,\,\, g\in\mathcal{F}L^1_{\nu_E}(\RR^n),\, e\in E.\label{mult-module-ine}
\end{align}
Consequently, $E$ becomes a Banach convolution module over the Beurling convolution algebra $L^1_{\omega_E}(\RR^n)$ and a Banach multiplication module over the Fourier-Beurling multiplication algebra $\mathcal{F}L^1_{\nu_E}(\RR^n)$ (the multiplication in $\mathcal{F}L^1_{\nu_E}(\RR^n)$ is defined by $g_1\cdot g_2:=\mathcal{F}(\mathcal{F}^{-1}g_1*\mathcal{F}^{-1}g_2)$; when $\nu_E\geq 1$, $\mathcal{F}L^1_{\nu_E}(\RR^n)$ consists of continuous functions and this multiplication coincides with the ordinary pointwise multiplication of continuous functions). Furthermore, $\mathcal{F}E$ (with the induced norm $\|v\|_{\mathcal{F}E}:=\|\mathcal{F}^{-1}v\|_E$, $v\in\mathcal{F}E$) is again a (TMIB)-space with $\omega_{\mathcal{F}E}=\check{\nu}_E$ and $\nu_{\mathcal{F}E}=\omega_E$. Typical examples of (TMIB)-spaces are $L^p_{\langle\cdot\rangle^s}(\RR^n)$, $1\leq p<\infty$, $s\in\RR$, and the space of continuous functions which vanish at infinity $\mathcal{C}_0(\RR^n)$ (below we will give more examples).\\
\indent The Banach space $E$ is said to be a \textit{dual translation-modulation invariant Banach space of distributions} (abbreviated as (DTMIB)-space) if it is the strong dual of a (TMIB)-space $E_0$. In this case $E$ again satisfies the continuous inclusions $\SSS(\RR^n)\subseteq E\subseteq \SSS'(\RR^n)$ but $\SSS(\RR^n)$ may fail to be dense in $E$ (e.g. $E=L^{\infty}(\RR^n)$). Furthermore, $E$ always satisfies $(ii)$ and $(iii)$ and $\omega_E=\check{\omega}_{E_0}$ and $\nu_E=\nu_{E_0}$. One defines the convolution with elements in $L^1_{\omega_E}(\RR^n)$ and the multiplication with elements in $\mathcal{F}L^1_{\nu_E}(\RR^n)$ via duality: $\langle f*e,e_o\rangle:=\langle e,\check{f}*e_0\rangle$, $e\in E$, $f\in L^1_{\omega_E}(\RR^n)$, $e_0\in E_0$, and $\langle g\cdot e,e_0\rangle:=\langle e,g\cdot e_0\rangle$, $e\in E$, $g\in \mathcal{F}L^1_{\nu_E}(\RR^n)$, $e_0\in E_0$. With this, $E$ also satisfies \eqref{con-module-ine} and \eqref{mult-module-ine}, i.e. it becomes a convolution module over $L^1_{\omega_E}(\RR^n)$ and a multiplication module over $\mathcal{F}L^1_{\nu_E}(\RR^n)$. Typical examples of (DTMIB)-spaces are $L^p_{\langle\cdot\rangle^s}(\RR^n)$, $1< p\leq\infty$, $s\in\RR$, and the space of bounded Radon measures $\mathcal{M}^1(\RR^n)$.\\
\indent Let $E$ be a (TMIB) or a (DTMIB)-space. We are going to consider wave front sets which collect the directions where a given distribution does not behave locally as an elements of $E$. For this purpose, we impose the following additional condition introduced in \cite[Section 3]{DPP} (we refer to \cite[Section 3]{DPP} for the reasons why such condition is needed for the wave front set to behave in a natural way):
\begin{itemize}
\item[$(iv)$] The elements of $\XX(\RR^n)$ are Fourier multipliers for $E$, i.e for each $\chi\in\XX(\RR^n)$, the map $\mathcal{F}E\rightarrow \mathcal{F}E$, $f\mapsto \chi f$, is well-defined and continuous (equivalently, the map $E\rightarrow E$, $e\mapsto \chi(D) e$, is well-defined and continuous).
\end{itemize}
Given $u\in\DD'(U)$, $U$ open set in $\RR^n$, the wave front set of $u$ with respect to $E$, in notation $WF^E(u)$, is defined as follows; see \cite[Section 3]{DPP}. For each $\varphi\in\DD(U)$, we first define the closed cone $\Sigma^E(\varphi u)$ in $\RR^n\backslash\{0\}$ be declaring that $\xi\in\RR^n\backslash\{0\}$ does not belong to $\Sigma^E(\varphi u)$ if there is an open cone $V$ in $\RR^n\backslash\{0\}$ containing $\xi$ and a nonnegative $\psi\in\XX(\RR^n)$ satisfying $\psi=\operatorname{const.}>0$ on $\overline{V}\backslash B(0,R)$ for some $R>0$ such that $\psi\mathcal{F}(\varphi u)\in\mathcal{F}E$. For each $x\in U$, set $\Sigma^E_x(u):=\bigcap_{\varphi\in\DD(U),\, \varphi(x)\neq0}\Sigma^E(\varphi u)$ and define
\begin{equation*}
WF^E(u):=\{(x,\xi)\in U\times(\RR^n\backslash\{0\})\,|\, \xi\in \Sigma^E_x(u)\}.
\end{equation*}
Then $WF^E(u)$ is a closed conic subset of $U\times(\RR^n\backslash\{0\})$ and we refer to \cite[Section 3]{DPP} that $WF^E(u)$ indeed behaves in a natural way.\footnote{Aside from the condition $(iv)$ which is the same as in \cite[Section 3]{DPP}, the assumption that $E$ is a (TMIB) or a (DTMIB)-space readily imply that $E$ satisfies the conditions \cite[$(i)$-$(iii)$ in Section 3]{DPP}. We will need the slightly stronger assumption that $E$ is (TMIB) or a (DTMIB)-space for some of the results in the article.} Of course, $L^p_{\langle\cdot\rangle^s}(\RR^n)$, $1<p<\infty$, $s\in\RR$, satisfy $(iv)$, however $L^1_{\langle\cdot\rangle^s}(\RR^n)$, $L^{\infty}_{\langle\cdot\rangle^s}(\RR^n)$, $s\in\RR$, and $\mathcal{C}_0(\RR^n)$ do not; see \cite[Theorem 2.10]{DPP}. Also, the $L^p$-Sobolev spaces $W^{r,p}(\RR^n)$, $1<p<\infty$, $r\in\RR$, are (TMIB)-space that satisfy $(iv)$; when $E=W^{r,p}(\RR^n)$, $WF^E(u)$ is the ordinary $L^p$-wave front set of order $r\in\RR$ of $u$, usually denoted by $WF^{r,p}(u)$ (when $p=2$, this is further abbreviated as $WF^r(u)$). In addition the Besov spaces $B^s_{p,q}(\RR^n)$, $s\in\RR$, $p,q\in[1,\infty)$, are (TMIB)-spaces which satisfy $(iv)$ while $B^s_{\infty,\infty}(\RR^n)$, $s\in\RR$, is a (DTMIB)-space which satisfies $(iv)$ (see \cite[Theorem 2.17, p. 257, and Corollary 5.2, p. 608]{sawano} and \cite[Theorem, p. 140]{triebel}); consequently, the H\"older spaces of non-integer order $s>0$ are (DTMIB)-space which satisfy $(iv)$ since they coincide with $B^s_{\infty,\infty}(\RR^n)$.\\
\indent We end the section with the following technical result which we will tacitly employ throughout the rest of the article.

\begin{lemma}\label{lem-for-tmbsjk}
The (TMIB)-space $E$ satisfies $(iv)$ if and only if $E'$ does so.
\end{lemma}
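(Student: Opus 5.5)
Since $E$ is (TMIB), $\SSS(\RR^n)$ is dense in $E$ and $E'$ is a (DTMIB)-space with predual $E$; in particular $\SSS(\RR^n)\subseteq E'\subseteq\SSS'(\RR^n)$ continuously and $\langle e',\varphi\rangle_{E'\times E}=\langle e',\varphi\rangle_{\SSS'\times\SSS}$ for $\varphi\in\SSS$, $e'\in E'$. The plan is to use that $\chi(D)$ has a transpose of the same type: for $\chi\in\XX(\RR^n)$ and $\varphi,\phi\in\SSS(\RR^n)$,
\begin{equation*}
\int_{\RR^n}(\chi(D)\varphi)\phi\,dx=\int_{\RR^n}\varphi\,(\check{\chi}(D)\phi)\,dx,
\end{equation*}
which follows from Parseval, and $\check{\chi}\in\XX(\RR^n)$ whenever $\chi\in\XX(\RR^n)$. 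So $\XX(\RR^n)$ is invariant under reflection, and the duality statement reduces to showing that $\chi(D)$ on $E$ and $\check{\chi}(D)$ on $E'$ are mutual transposes.

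($\Rightarrow$) Assume $(iv)$ for $E$ and fix $\chi\in\XX(\RR^n)$. Then $\check{\chi}(D)\in\mathcal{L}(E)$, so its Banach transpose ${}^t(\check{\chi}(D))\in\mathcal{L}(E')$. I would check that this transpose coincides with $\chi(D)$ on $\SSS'$ restricted to $E'$: for $e'\in E'$ and $\varphi\in\SSS$, $\langle {}^t(\check{\chi}(D))e',\varphi\rangle=\langle e',\check{\chi}(D)\varphi\rangle=\langle\chi(D)e',\varphi\rangle$, the last equality being exactly the definition of $\chi(D)$ on $\SSS'(\RR^n)$ by duality, which is consistent with the identity above. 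Since $\SSS$ separates points of $\SSS'$, $\chi(D)e'={}^t(\check{\chi}(D))e'\in E'$ and $\chi(D):E'\to E'$ is continuous. Note that I must take care that $\chi(D)$ is well defined on $\SSS'$: $\chi$ is smooth with all derivatives bounded, so it is a multiplier on $\SSS$ and hence on $\SSS'$.

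($\Leftarrow$) Assume $(iv)$ for $E'$. Given $\chi\in\XX(\RR^n)$ and $\varphi\in\SSS$, estimate $\|\chi(D)\varphi\|_E$ via Hahn--Banach: $\|\chi(D)\varphi\|_E=\sup_{\|e'\|_{E'}\leq1}|\langle e',\chi(D)\varphi\rangle|=\sup|\langle\check{\chi}(D)e',\varphi\rangle|\leq\|\check{\chi}(D)\|_{\mathcal{L}(E')}\|\varphi\|_E$, where $\chi(D)\varphi\in\SSS\subseteq E$. This is the main (mild) obstacle: the identity $\langle e',\chi(D)\varphi\rangle=\langle\check{\chi}(D)e',\varphi\rangle$ requires the duality pairing with $\check\chi(D)e'\in E'$ to agree with the $\SSS'$-pairing, which holds since elements of $E'$ are identified with distributions by restriction to $\SSS$. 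Hence $\chi(D)$ extends by density of $\SSS$ in $E$ to a continuous operator on $E$; finally, the extension agrees with $\chi(D)$ on $\SSS'$ restricted to $E$, because both are continuous $E\to\SSS'$ (as $E\hookrightarrow\SSS'$ continuously) and agree on the dense set $\SSS$. This gives $(iv)$ for $E$.
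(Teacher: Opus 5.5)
Your proposal is correct and essentially matches the paper's proof. The paper also calls ($\Rightarrow$) straightforward, and proves ($\Leftarrow$) by the same duality estimate $|\langle e',\chi(D)\varphi\rangle|\leq\|\check{\chi}(D)e'\|_{E'}\|\varphi\|_E$, together with density of $\SSS(\RR^n)$ in $E$ and identification of the limit in $\SSS'(\RR^n)$. The only difference is packaging: the paper uses a Cauchy sequence and the closed graph theorem, whereas you read off the operator bound directly and extend by continuity.
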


\begin{proof}
It is straightforward to show that $E'$ satisfies $(iv)$ when $E$ does so. Assume that $E'$ satisfies $(iv)$ and let $\chi\in\XX(\RR^n)$. It suffices to show $\chi(D)E\subseteq E$ since then the continuity will follow from the closed graph theorem. Let $e\in E$ and pick a sequence $(\varphi_j)_{j\in\ZZ_+}$ in $\SSS(\RR^n)$ which converges to $e$ in $E$. Then
\begin{equation*}
|\langle e',\chi(D)\varphi_j-\chi(D)\varphi_k\rangle|\leq \|\check{\chi}(D)e'\|_{E'}\|\varphi_j-\varphi_k\|_E\leq C\|e'\|_{E'}\|\varphi_j-\varphi_k\|_E,
\quad e'\in E',
\end{equation*}
which implies $\|\chi(D)\varphi_j-\chi(D)\varphi_k\|_E\leq C\|\varphi_j-\varphi_k\|_E$. Whence, $(\chi(D)\varphi_j)_{j\in\ZZ_+}$ is a Cauchy sequence in $E$. Since it converges to $\chi(D)e$ in $\SSS'(\RR^n)$, we deduce $\chi(D)e\in E$ and the proof is complete.
\end{proof}

\section{The space of distributions with $E$-wave front in a fixed conic set}\label{Sec1}

Let $U$ be an open set in $\RR^n$ and $L$ a closed conic subset of $U\times(\RR^n\backslash\{0\})$. We define the space
\begin{equation*}
\DD'^E_L(U):=\{u\in\DD'(U)\,|\, WF^E(u)\subseteq L\}.
\end{equation*}
In what follows, we will frequently employ the following subspace of $\XX(\RR^n)$:
\begin{equation*}
\XX_0(\RR^n):=\{\chi\in\XX(\RR^n)\,|\, 0\notin\supp\chi\}.
\end{equation*}
Reasoning as in the proof of \cite[Lemma 8.2.1, p. 262]{hor} and applying \cite[Proposition 3.6]{DPP} one verifies the following fact.

\begin{lemma}\label{lem-for-semwelds}
Let $U$ be an open set in $\RR^n$ and $L$ a closed conic subset of $L\subseteq U\times (\RR^n\backslash\{0\})$. Then $u\in\DD'(U)$ belongs to $\DD'^E_L(U)$ if and only if for every $\varphi\in\DD(U)$ and every $\psi\in\XX_0(\RR^n)$ satisfying $(\supp\varphi\times\supp\psi)\cap L=\emptyset$ it holds that $\psi\mathcal{F}(\varphi u)\in \mathcal{F}E$.
\end{lemma}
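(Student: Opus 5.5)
We prove the two implications separately, following the proof of \cite[Lemma 8.2.1, p. 262]{hor} with Sobolev-type estimates replaced by membership in $\mathcal{F}E$. The tool we rely on is \cite[Proposition 3.6]{DPP}. We use it in this form: if $\varphi\in\DD(U)$, $u\in\DD'(U)$, and $\psi\in\XX(\RR^n)$ is supported, away from a ball, in a closed cone $\Gamma$ disjoint from $\Sigma^E(\varphi u)$, then $\psi\mathcal{F}(\varphi u)\in\mathcal{F}E$; moreover $\Sigma^E(\chi\varphi u)\subseteq \Sigma^E(\varphi u)$ for every $\chi\in\DD(U)$. Besides this we use the multiplier property $(iv)$ and the module property \eqref{mult-module-ine}, and we absorb changes of cutoff by a partition of unity.

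\emph{Sufficiency.} Assume the condition and fix $(x_0,\xi_0)\in L^c$. Since $L$ is closed and conic, there are a compact neighbourhood $K$ of $x_0$ and a closed conic neighbourhood $\Gamma$ of $\xi_0$ with $(K\times\Gamma)\cap L=\emptyset$. Choose $\varphi\in\DD(U)$ with $\supp\varphi\subseteq K$ and $\varphi(x_0)\neq0$. Choose $\psi\in\XX_0(\RR^n)$, $\psi\geq0$, with $\supp\psi\subseteq\Gamma$ and $\psi$ equal to a positive constant on $\overline{V}\backslash B(0,R)$ for some open cone $V\ni\xi_0$ and some $R>0$. By hypothesis $\psi\mathcal{F}(\varphi u)\in\mathcal{F}E$, so $\xi_0\notin\Sigma^E(\varphi u)$. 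Hence $\xi_0\notin\Sigma^E_{x_0}(u)$, i.e. $(x_0,\xi_0)\notin WF^E(u)$.

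\emph{Necessity.} Let $WF^E(u)\subseteq L$, and let $\varphi\in\DD(U)$ and $\psi\in\XX_0(\RR^n)$ with $(\supp\varphi\times\supp\psi)\cap L=\emptyset$. Put $\Gamma:=\RR_+\supp\psi\cup\{0\}$, which is closed outside the origin. For each $x\in\supp\varphi$ and each $\xi\in\Gamma\cap\mathbb{S}^{n-1}$ we have $(x,\xi)\notin WF^E(u)$. Hence there are $\varphi_{x,\xi}\in\DD(U)$ with $\varphi_{x,\xi}(x)\neq0$ and an open cone $V_{x,\xi}\ni\xi$ such that $\overline{V_{x,\xi}}\cap\Sigma^E(\varphi_{x,\xi}u)=\emptyset$. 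By compactness of $\Gamma\cap\mathbb{S}^{n-1}$, for fixed $x$ finitely many cones cover $\Gamma$. Intersecting the corresponding finitely many cutoffs, i.e. taking $\phi_x:=\prod_j\varphi_{x,\xi_j}$ and using $\Sigma^E(\chi v)\subseteq\Sigma^E(v)$, we obtain $\phi_x\in\DD(U)$, nonvanishing near $x$, with $\Sigma^E(\phi_x u)\cap\Gamma=\emptyset$. Cover $\supp\varphi$ by finitely many of the sets $\{\phi_{x_k}\neq0\}$ and choose a smooth partition $\varphi=\sum_k\varphi\chi_k$ with $\supp\chi_k\subseteq\{\phi_{x_k}\neq0\}$. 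Then $\varphi\chi_k u=(\varphi\chi_k/\phi_{x_k})\,\phi_{x_k}u$, so $\Sigma^E(\varphi\chi_k u)\subseteq\Sigma^E(\phi_{x_k}u)$, which is disjoint from $\Gamma$. Since $\supp\psi\subseteq\Gamma$, \cite[Proposition 3.6]{DPP} gives $\psi\mathcal{F}(\varphi\chi_ku)\in\mathcal{F}E$ for each $k$, and summing over $k$ gives $\psi\mathcal{F}(\varphi u)\in\mathcal{F}E$.

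\emph{Main obstacle.} The delicate point is the passage from ``$\psi$ equal to a constant on a cone'' in the definition of $\Sigma^E$ to an arbitrary $\psi\in\XX_0$ supported in a cone disjoint from $\Sigma^E$, together with the monotonicity $\Sigma^E(\chi v)\subseteq\Sigma^E(v)$. The second requires showing that $\psi\mathcal{F}(\chi v)=\psi\,(2\pi)^{-n}(\mathcal{F}\chi*\mathcal{F}v)$ lies in $\mathcal{F}E$. This is exactly where the lack of solidity of $\mathcal{F}E$ bites. Both facts are the content of \cite[Proposition 3.6]{DPP}, whose proof splits $\mathcal{F}\chi$ into a part near $0$ (controlled by $(iv)$ and by the translation bounds, since $\mathcal{F}E$ is (TMIB) with $\omega_{\mathcal{F}E}=\check{\nu}_E$) and a rapidly decaying remainder (controlled by \eqref{con-module-ine}). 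We therefore invoke it rather than reprove it.
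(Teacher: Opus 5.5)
Your proof is correct and takes the route the paper indicates. The paper's only justification is that one reasons as in the proof of \cite[Lemma 8.2.1, p. 262]{hor} and applies \cite[Proposition 3.6]{DPP}. Your argument writes that reasoning out: compactness in the cone variable, a product of cutoffs via $\Sigma^E(\chi v)\subseteq\Sigma^E(v)$, and a partition of unity in $x$.

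One small remark: your first black-box fact, passing from cutoffs that are constant on a cone to an arbitrary $\psi\in\XX$ supported in a closed cone disjoint from $\Sigma^E$, follows directly from compactness of the sphere and $(iv)$. So only the monotonicity $\Sigma^E(\chi v)\subseteq\Sigma^E(v)$ actually needs \cite[Proposition 3.6]{DPP}.
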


We equip $\DD'^E_L(U)$ with the locally convex topology induced by all continuous seminorms on $\DD'(U)$ together with all seminorms
\begin{equation}
\label{seminorm}
\mathfrak{p}_{E;\varphi,\psi}(u):=\|\psi\mathcal{F}(\varphi u)\|_{\mathcal F E},
\end{equation}
where $\varphi\in \DD(U)$ and $\psi\in\XX_0(\RR^m)$ satisfy $(\supp\varphi\times\supp\psi)\cap L=\emptyset$. Clearly, $\mathcal{C}^{\infty}(U)\subseteq \DD'^E_{L}(U)\subseteq \DD'(U)$ continuously; in fact $\{v\in\DD'(U)\,|\, \varphi v\in E,\, \forall\varphi\in \DD(U)\}\subseteq \DD'^E_L(U)$.

\begin{remark}
Let $\varphi\in\DD(U)$ and $\psi\in\XX(\RR^n)$ be such that $(\supp\varphi\times\supp\psi)\cap L=\emptyset$. Pick $\chi\in\mathcal{C}^{\infty}(\RR^n)$ such that $0\leq \chi\leq1$, $\chi=0$ on $B(0,1)$ and $\chi=1$ on $\RR^n\backslash B(0,2)$. For $u\in\DD'(U)$, write $\psi\mathcal{F}(\varphi u)=(1-\chi)\psi\mathcal{F}(\varphi u)+\chi\psi\mathcal{F}(\varphi u)$ and notice that $(1-\chi)\psi\mathcal{F}(\varphi u)\in\DD(\RR^n)\subseteq \mathcal{F}E$. Hence $\psi\mathcal{F}(\varphi u)\in\mathcal{F}E$ if and only if $\chi\psi\mathcal{F}(\varphi u)\in\mathcal{F}E$ which implies that Lemma \ref{lem-for-semwelds} is true even with $\XX(\RR^n)$ in place of $\XX_0(\RR^n)$. Furthermore, since $\DD'(U)\rightarrow [0,\infty)$, $v\mapsto \|(1-\chi)\psi\mathcal{F}(\varphi v)\|_{\mathcal{F}E}$, is a continuous seminorm on $\DD'(U)$, we deduce that $\mathfrak{p}_{E;\varphi,\psi}(u):=\|\psi\mathcal{F}(\varphi u)\|_{\mathcal F E}$ is also a continuous seminorm on $\DD'^E_L(U)$.
\end{remark}

\begin{remark}
The reason why we used $\XX_0(\RR^n)$ in the definition of the topology of $\DD'^E_L(U)$ is the following useful property: For $\chi\in\XX_0(\RR^n)$, the cone $\{0\}\cup\RR_+\supp\chi$ is closed in $\RR^n$; we will always tacitly employ this fact throughout the rest of the article. To verify it, first notice that $\{0\}\cup\RR_+\supp\chi=\{0\}\cup\RR_+(\supp\chi\cap \overline{B(0,R)})$, for $R>0$ large enough. As $\supp\chi\cap \overline{B(0,R)}$ is compact and does not contain $0$, it is straightforward to show that $\{0\}\cup\RR_+(\supp\chi\cap \overline{B(0,R)})$ is closed.\footnote{If $0\in\supp\chi$, then $\{0\}\cup\RR_+(\supp\chi\cap \overline{B(0,R)})$ may fail to be closed in $\RR^n$.}
\end{remark}

\begin{remark}
Notice that $\DD'^E_{U\times(\RR^n\backslash\{0\})}(U)=\DD'(U)$ as l.c.s. On the other hand, if $E=H^r(\RR^n)$, $r\in\RR$, then $\DD'^E_L(U)$ is exactly the space $\DD'^r_L(U)$ considered in \cite{PP}.
\end{remark}

The following technical result will prove useful throughout the rest of the article.

\begin{lemma}\label{lem-for-est-oftheremininestwavefrps}
Let $\chi\in\SSS(\RR^n)$ and let $\psi_1,\psi_2\in\DD_{L^{\infty}}(\RR^n)$ satisfy the following: There are closed cones $V_1$ and $V_2$ in $\RR^n$ and $R>0$ such that $V_1\cap V_2=\{0\}$ and $\supp\psi_j\backslash B(0,R)\subseteq V_j$, $j=1,2$. Then
\begin{equation}\label{map-con-onskl}
\lim_{\substack{\longrightarrow \\ s\rightarrow\infty}}L^1_{\langle \cdot\rangle^{-s}}(\RR^n)\rightarrow \SSS(\RR^n),\quad v\mapsto \psi_2(\chi*(\psi_1v)),
\end{equation}
is a well-defined and continuous map.
\end{lemma}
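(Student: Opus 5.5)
The plan is to prove directly that, for each fixed $s>0$, the map $v\mapsto \psi_2(\chi*(\psi_1 v))$ is continuous from $L^1_{\langle\cdot\rangle^{-s}}(\RR^n)$ into $\SSS(\RR^n)$. By the universal property of the inductive limit, this gives both well-definedness and continuity of \eqref{map-con-onskl}. Concretely, for all $N\in\NN$ and $\beta\in\NN^n$ I will aim for a bound
\begin{equation*}
\sup_{\xi\in\RR^n}\langle\xi\rangle^N\bigl|\partial^\beta\bigl(\psi_2(\chi*(\psi_1v))\bigr)(\xi)\bigr|\leq C\|v\|_{L^1_{\langle\cdot\rangle^{-s}}(\RR^n)}.
\end{equation*}
Since $\psi_1v\in L^1_{\langle\cdot\rangle^{-s}}$ and $\chi\in\SSS$, the convolution is smooth and may be differentiated under the integral: $\partial^\gamma(\chi*(\psi_1v))=(\partial^\gamma\chi)*(\psi_1v)$. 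The derivatives of $\psi_2$ are bounded because $\psi_2\in\DD_{L^\infty}(\RR^n)$, and $\supp\partial^\delta\psi_2\subseteq\supp\psi_2$. By Leibniz's rule it therefore suffices to bound
\begin{equation*}
\sup_{\xi\in\supp\psi_2}\langle\xi\rangle^N\int_{\RR^n}|\partial^\gamma\chi(\xi-\eta)||\psi_1(\eta)||v(\eta)|\,d\eta .
\end{equation*}

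The first geometric step is an elementary fact: there is $c>0$ such that $|\xi-\eta|\geq c(|\xi|+|\eta|)$ for all $\xi\in V_2$ and $\eta\in V_1$. Both sides are positively homogeneous of degree $1$ in $(\xi,\eta)$, so it suffices to minimize $|\xi-\eta|$ over the compact set $\{(\xi,\eta)\in V_2\times V_1\,|\,|\xi|+|\eta|=1\}$. The minimum is positive: $\xi=\eta$ would force $\xi=\eta\in V_1\cap V_2=\{0\}$, which is excluded by $|\xi|+|\eta|=1$.

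Next I split the domain of integration and the range of $\xi$ into three cases, in each case using $|\partial^\gamma\chi(\zeta)|\leq C_M\langle\zeta\rangle^{-M}$ with $M$ large.
\begin{itemize}
\item[(a)] If $|\xi|\leq R$, the weight $\langle\xi\rangle^N$ is bounded. Peetre's inequality gives $\langle\xi-\eta\rangle^{-s}\leq 2^{s/2}\langle\xi\rangle^{s}\langle\eta\rangle^{-s}$, which yields the bound $C\|v\|_{L^1_{\langle\cdot\rangle^{-s}}}$.
\item[(b)] If $|\xi|>R$ and $|\eta|\leq 2R$, then for $|\xi|\geq 4R$ we have $\langle\xi-\eta\rangle\geq c\langle\xi\rangle$, while bounded $\xi$ is handled as in (a). Taking $M=N+s$ and using $\langle\eta\rangle^{-s}\geq c'$ on this bounded region gives the bound.
\item[(c)] If $|\xi|>R$ and $|\eta|>R$ with $\psi_1(\eta)\neq0$, then $\xi\in V_2$ and $\eta\in V_1$. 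The cone estimate gives $\langle\xi-\eta\rangle^{-N-s}\leq C\langle\xi\rangle^{-N}\langle\eta\rangle^{-s}$, and so the integral is at most $C\|\psi_1\|_{L^\infty}\|v\|_{L^1_{\langle\cdot\rangle^{-s}}}$.
\end{itemize}

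The main point is case (c), where the cone separation replaces the missing decay of $v$. The rest is bookkeeping, with case (a) needing Peetre's inequality to absorb the polynomial growth of $v$. Linearity together with the seminorm bounds gives continuity from each step $L^1_{\langle\cdot\rangle^{-s}}(\RR^n)$, which completes the argument.
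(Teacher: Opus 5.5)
Your proposal is correct and follows essentially the same route as the paper. Both arguments use Leibniz's rule, a cone-separation inequality, Peetre's inequality for bounded frequencies, and the fact that bounded subsets of each step $L^1_{\langle\cdot\rangle^{-s}}(\RR^n)$ are mapped into bounded subsets of $\SSS(\RR^n)$. The only differences are minor: you get the symmetric form $|\xi-\eta|\geq c(|\xi|+|\eta|)$ by compactness where the paper derives $|\xi'-\xi|\geq\varepsilon|\xi'|$ by the triangle inequality, and your compactness formulation also covers the degenerate cases $V_1=\{0\}$ or $V_2=\{0\}$, which the paper treats separately.
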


\begin{proof}
When $V_1=\{0\}$ or $V_2=\{0\}$ the claim is trivial. Assume that both $V_1\backslash\{0\}\neq \emptyset$ and $V_2\backslash\{0\}\neq\emptyset$. There is $0<\varepsilon<1/2$ such that $|\xi-\xi'|\geq 2\varepsilon$, $\xi'\in V_1\cap\mathbb{S}^{n-1}$, $\xi\in V_2\cap\mathbb{S}^{n-1}$. For general $\xi'\in V_1\backslash\{0\}$ and $\xi\in V_2\backslash\{0\}$, it holds that
\begin{equation*}
\left|\frac{\xi'}{|\xi'|}-\frac{\xi}{|\xi'|}\right|\geq \left|\frac{\xi'}{|\xi'|}-\frac{\xi}{|\xi|}\right|-\left|\frac{\xi}{|\xi|}-\frac{\xi}{|\xi'|}\right|\geq 2\varepsilon-\frac{\left||\xi'|-|\xi|\right|}{|\xi'|}\geq 2\varepsilon-\frac{|\xi'-\xi|}{|\xi'|}.
\end{equation*}
Consequently, $|\xi'-\xi|\geq \varepsilon|\xi'|$; this trivially holds even when $\xi=0$ or $\xi'=0$.\\
\indent Clearly, the right-hand side of \eqref{map-con-onskl} is a well-defined smooth function on $\RR^n$ which we denote by $I_v$. Let $s>0$ and let $B$ be a bounded subset of $L^1_{\langle\cdot\rangle^{-s}}(\RR^n)$. For $v\in B$, $l>0$ and $\alpha\in\NN^n$, we infer
\begin{equation}\label{ine-for-operashconts}
\langle\xi\rangle^l|\partial^{\alpha}I_v(\xi)|\leq C\sum_{\beta\leq \alpha} \int_{\RR^n}|\partial^{\beta}\chi(\eta)|\langle\eta\rangle^l\langle\xi-\eta\rangle^l|\partial^{\alpha-\beta}\psi_2(\xi)| |\psi_1(\xi-\eta)||v(\xi-\eta)|d\eta.
\end{equation}
The right-hand side of \eqref{ine-for-operashconts} is uniformly bounded for $|\xi|\leq 2R$ and equals $0$ when $\xi\in \RR^n\backslash (V_2\cup B(0,2R))$. Let $\xi\in V_2\backslash B(0,2R)$. The domain of the integration can be reduced to $\{\eta\in\RR^n\,|\,\xi-\eta\in V_1\}\cup\{\eta\in\RR^n\,|\, |\xi-\eta|\leq R\}$. On the first set, the above inequality (applied with $\xi'=\xi-\eta$) implies $|\eta|\geq \varepsilon|\xi-\eta|$, while on the second we trivially have $|\xi-\eta|\leq R\leq |\eta|$. Consequently, the right-hand side of \eqref{ine-for-operashconts} is uniformly bounded when $\xi\in V_2\backslash B(0,R)$. We deduce that the map $L^1_{\langle \cdot\rangle^{-s}}(\RR^n)\rightarrow \SSS(\RR^n)$, $v\mapsto \psi_2(\chi*(\psi_1v))$, is well-defined and maps bounded sets into bounded sets. Consequently, it is continuous. Since $s>0$ was arbitrary, \eqref{map-con-onskl} is well-defined and continuous.
\end{proof}

The next proposition better describes the topology of $\DD'^E_L(U)$. Its main ingredients and its proof are similar to \cite[Proposition 3.7]{PP}. To state it, we need the following objects. Assume that $L^c\neq \emptyset$. Pick a countable dense subset $\{(x^{(j)},\xi^{(j)})\}_{j\in\ZZ_+}$ of $L^c$ and denote $\omega^{(j)}:=\xi^{(j)}/|\xi^{(j)}|\in\mathbb{S}^{n-1}$, $j\in\ZZ_+$. Set\footnote{Here we employ $\operatorname{dist}(x,\emptyset)=\infty$, for any element $x$.}
$$
s_j:=\min\{1,\operatorname{dist}(x^{(j)},\partial U),\operatorname{dist}((x^{(j)},\omega^{(j)}),L)\}>0.
$$
Clearly, $(\overline{B(x^{(j)},s_j/2)}\times \overline{B(\omega^{(j)},s_j/2)})\cap L=\emptyset$ and $B(x^{(j)},s_j)\subseteq U$, $j\in\ZZ_+$. For each $j,k\in\ZZ_+$, we set:
\begin{gather*}
O_{j,k}:=B(x^{(j)},s_j/(5k)),\quad O'_{j,k}:=B(x^{(j)},s_j/(4k)),\quad O''_{j,k}:=B(x^{(j)},s_j/(3k));\\
V_{j,k}:=\RR_+ B(\omega^{(j)},s_j/(5k)),\quad V'_{j,k}:=\RR_+ B(\omega^{(j)},s_j/(4k)),\quad V''_{j,k}:=\RR_+ B(\omega^{(j)},s_j/(3k)).
\end{gather*}
Notice that
$$
\overline{O_{j,k}}\times \overline{V_{j,k}}\subseteq O'_{j,k}\times (V'_{j,k}\cup\{0\}),\quad \overline{O'_{j,k}}\times \overline{V'_{j,k}}\subseteq O''_{j,k}\times (V''_{j,k}\cup\{0\}),\quad (\overline{O''_{j,k}}\times \overline{V''_{j,k}})\cap L=\emptyset.
$$
For each $j,k\in\ZZ_+$, pick $\phi_{j,k}\in\DD(\RR^n)$ and $\widetilde{\phi}_{j,k}\in\XX_0(\RR^n)$ which satisfy the following conditions:
\begin{itemize}
\item[$(a)$] $0\leq \phi_{j,k}\leq 1$ and $0\leq \widetilde{\phi}_{j,k}\leq 1$;
\item[$(b)$] $\phi_{j,k}=1$ on $\overline{O_{j,k}}$ and $\supp\phi_{j,k}\subseteq O'_{j,k}$;
\item[$(c)$] $\widetilde{\phi}_{j,k}=1$ on $\overline{V}_{j,k}\backslash B(0,2)$ and $\supp\widetilde{\phi}_{j,k}\subseteq V'_{j,k}\backslash \overline{B(0,1)}$.
\end{itemize}

\begin{proposition}\label{pro-for-top-isk111}
Let $L$ be a closed conic subset of $U\times(\RR^n\backslash\{0\})$ satisfying $L^c\neq \emptyset$. Let $O_{j,k}$, $O'_{j,k}$, $O''_{j,k}$, $V_{j,k}$, $V'_{j,k}$, $V''_{j,k}$, $\phi_{j,k}$ and $\widetilde{\phi}_{j,k}$ be as above. Then the mapping
\begin{gather*}
\mathcal{I}:\DD'^E_L(U)\rightarrow \DD'(U)\times (\mathcal{F}E^{\ZZ_+\times \ZZ_+}),\quad \mathcal{I}(u)=(u,\mathbf{f}_u),\,\, \mbox{where}\\
\mathbf{f}_u(j,k):=\widetilde{\phi}_{j,k}\mathcal{F}(\phi_{j,k} u),\, j,k\in\ZZ_+,
\end{gather*}
is a well-defined topological imbedding with closed image. Consequently, $\DD'^E_L(U)$ is complete.
\end{proposition}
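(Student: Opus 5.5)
Well-definedness and continuity are immediate. Since $\supp\phi_{j,k}\times\supp\widetilde\phi_{j,k}\subseteq O'_{j,k}\times V'_{j,k}$ is disjoint from $L$, Lemma \ref{lem-for-semwelds} gives $\mathbf{f}_u(j,k)\in\mathcal{F}E$ for $u\in\DD'^E_L(U)$. Moreover, each coordinate $u\mapsto\|\mathbf{f}_u(j,k)\|_{\mathcal{F}E}=\mathfrak{p}_{E;\phi_{j,k},\widetilde\phi_{j,k}}(u)$ is one of the defining seminorms. Hence $\mathcal{I}$ is continuous, and it is clearly injective.

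The real content is that $\mathcal{I}$ is open onto its image: every defining seminorm $\mathfrak{p}_{E;\varphi,\psi}$ must be dominated by a continuous seminorm on $\DD'(U)$ plus finitely many $\|\mathbf{f}_u(j,k)\|_{\mathcal{F}E}$. Fix $\varphi\in\DD(U)$ and $\psi\in\XX_0(\RR^n)$ with $(\supp\varphi\times\supp\psi)\cap L=\emptyset$. The set $K=\supp\varphi\times(\overline{\RR_+\supp\psi}\cap\mathbb{S}^{n-1})$ is compact, using the closedness of $\{0\}\cup\RR_+\supp\psi$, and it is contained in $L^c$. By density of $\{(x^{(j)},\xi^{(j)})\}$ and compactness, we choose finitely many pairs $(j_l,k_l)$, $l=1,\dots,N$, with $k_l$ large, such that the sets $O_{j_l,k_l}\times V_{j_l,k_l}$ cover $K$.

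Next we build a partition of unity $\sum_l\varphi_l=1$ on $\supp\varphi$ with $\supp\varphi_l\subseteq O_{j_l,k_l}$, and cutoffs $\psi_l\in\XX_0$. This has to be done carefully, with a covering adapted to both variables; the standard trick (as in \cite[Proposition 3.7]{PP}) is first to cover $\supp\varphi$ by finitely many small balls, then to cover the compact angular fibre over each ball. The goal is to write
\[
\psi\mathcal{F}(\varphi u)=\sum_l \psi_l\,\mathcal{F}(\varphi\varphi_l\phi_{j_l,k_l}u)+(\text{smoothing remainder}),
\]
where $\psi_l=\psi\,\widetilde\phi_{j_l,k_l}$-type factors have supports in $V_{j_l,k_l}$ away from the origin, and $\widetilde\phi_{j_l,k_l}=1$ there. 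Each main term is $(2\pi)^{-n}\psi_l\big(\mathcal{F}(\varphi\varphi_l)*(\widetilde\phi_{j_l,k_l}\mathcal{F}(\phi_{j_l,k_l}u))\big)$ plus $(2\pi)^{-n}\psi_l\big(\mathcal{F}(\varphi\varphi_l)*((1-\widetilde\phi_{j_l,k_l})\mathcal{F}(\phi_{j_l,k_l}u))\big)$.

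Both pieces are handled with earlier results. For the first piece, we use that $\mathcal{F}E$ is a (TMIB) or (DTMIB)-space, so by \eqref{con-module-ine} convolution with $\mathcal{F}(\varphi\varphi_l)\in\SSS\subseteq L^1_{\omega_{\mathcal{F}E}}$ is bounded on $\mathcal{F}E$. By condition $(iv)$, multiplication by $\psi_l\in\XX$ is also bounded on $\mathcal{F}E$. Hence the first piece is bounded by $C\|\mathbf{f}_u(j_l,k_l)\|_{\mathcal{F}E}$.

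For the second piece, $\supp\psi_l\setminus B(0,R)$ lies in a closed cone inside $V_{j_l,k_l}$, while $1-\widetilde\phi_{j_l,k_l}$ vanishes on $\overline V_{j_l,k_l}\setminus B(0,2)$. Choosing $\psi_l$ supported in a slightly smaller closed cone therefore gives disjoint cones $V_1,V_2$, and Lemma \ref{lem-for-est-oftheremininestwavefrps} applies with $v=\mathcal{F}(\phi_{j_l,k_l}u)$. Since $u\mapsto\mathcal{F}(\phi u)$ is continuous from $\DD'(U)$ into $\varinjlim L^1_{\langle\cdot\rangle^{-s}}$, and bounded sets of $\DD'(U)$ give bounded sets there, the second piece is bounded in $\SSS\subseteq\mathcal{F}E$ by a continuous seminorm on $\DD'(U)$. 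I expect the main obstacle to be exactly this bookkeeping of cutoffs, so that the angular localisation of $\psi$ is compatible with each $\widetilde\phi_{j_l,k_l}$. That is why the finer cones $V\subset V'\subset V''$ were introduced.

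Finally we show the image is closed. Let $(u_\lambda,\mathbf{f}_{u_\lambda})\to(u,\mathbf{f})$ be a net. Then $\mathbf{f}_{u_\lambda}(j,k)\to\widetilde\phi_{j,k}\mathcal{F}(\phi_{j,k}u)$ in $\SSS'$, and also $\mathbf{f}_{u_\lambda}(j,k)\to\mathbf{f}(j,k)$ in $\mathcal{F}E\hookrightarrow\SSS'$. Hence $\mathbf{f}(j,k)=\mathbf{f}_u(j,k)\in\mathcal{F}E$. This yields $WF^E(u)\subseteq L$: every point of $L^c$ lies in some $O_{j,k}\times V_{j,k}$ by density, and there $\widetilde\phi_{j,k}$ equals $1$ on a conic neighbourhood and $\phi_{j,k}(x)\neq0$. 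So $u\in\DD'^E_L(U)$ and $\mathbf{f}=\mathbf{f}_u$. Completeness then follows because $\DD'(U)$ and $\mathcal{F}E$ are complete, so the product is complete and its closed subspace is complete.
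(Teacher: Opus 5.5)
Your proposal is correct, and its analytic core is the paper's. You split $\mathcal F(\phi_{j,k}u)$ into $\widetilde\phi_{j,k}\mathcal F(\phi_{j,k}u)$ plus a remainder, bound the first part via \eqref{con-module-ine} and $(iv)$, and bound the second via Lemma \ref{lem-for-est-oftheremininestwavefrps}.

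The difference is the decomposition; note that the paper does not use a fibred covering. It takes one finite family $J_0$ and normalises the given cutoffs independently in each variable, $\psi_{j',k'}=(\varphi/\sum_{J_0}\phi_{j,k})\phi_{j',k'}$ and $\widetilde\psi_{j,k}=(\chi\psi/\sum_{J_0}\widetilde\phi_{j,k})\widetilde\phi_{j,k}$. This produces mismatched cross terms $\widetilde\psi_{j,k}\mathcal F(\psi_{j',k'}u)$. They are handled by a covering fact imported from \cite[Proposition 3.7]{PP}: there is an index $m$ with $O'_{j',k'}\subseteq O_{m,1}$ and $V'_{j,k}\subseteq V_{m,2}$. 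Each term is then controlled by $\mathbf f_u(m,1)$, and the cone separation comes from the margin between $V_{m,2}$ and $V_{m,1}$.

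So the paper gets its cutoffs for free but needs that covering fact. Your route avoids cross terms and the auxiliary index and is self-contained, but the angular cutoffs depend on the spatial piece and must be built by hand. To make this precise, your single-index display should become a double sum:
\begin{itemize}
\item With $V:=\{0\}\cup\RR_+\supp\psi$, set $W_{x_0}:=\bigcap\{O_{j_l,k_l}\,|\,x_0\in O_{j_l,k_l}\}$ for $x_0\in\supp\varphi$.
\item Take $\varphi_i$ with $\supp\varphi_i\subseteq W_{x_i}$ and $\sum_i\varphi_i=1$ on $\supp\varphi$.
\item Put $\psi_{i,l}(\xi):=\chi(\xi)\psi(\xi)\rho_{i,l}(\xi/|\xi|)$. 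Here $\chi\in\XX_0(\RR^n)$ vanishes near $0$ and equals $1$ outside a ball, and $\{\rho_{i,l}\}_l$ is a smooth partition of unity on $V\cap\mathbb S^{n-1}$ subordinate to $\{V_{j_l,k_l}\cap\mathbb S^{n-1}\,|\,x_i\in O_{j_l,k_l}\}$.
\end{itemize}
Then, for those $l$, $\varphi_i\phi_{j_l,k_l}=\varphi_i$, and $\supp\psi_{i,l}$ lies in a closed cone contained in $V_{j_l,k_l}\cup\{0\}$. Taking $\psi\widetilde\phi_{j_l,k_l}$ literally would not separate the cones, since its support reaches into $V'_{j_l,k_l}$, where $1-\widetilde\phi_{j_l,k_l}\neq0$.

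Your closed-image argument is also correct, and it supplies what the paper omits by reference to \cite{PP}. The density step already works with $k=1$, because $s_j$ is bounded below for $(x^{(j)},\omega^{(j)})$ near a given point of $L^c$.
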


\begin{proof}
It is straightforward to show that $\mathcal{I}$ is a well-defined continuous injection. We prove that $\mathcal{I}$ is an open mapping onto its image. It suffices to show that for each $\varphi\in\DD(U)\backslash\{0\}$ and $\psi\in\XX_0(\RR^m)\backslash\{0\}$ satisfying $(\supp \varphi \times\supp\psi)\cap L=\emptyset$, there is a finite $J\subseteq \ZZ_+\times \ZZ_+$, a continuous seminorm $\mathfrak{p}$ on $\DD'(U)$ and $C>0$ such that
\begin{equation}\label{ine-sem-for-openmap111}
\mathfrak{p}_{r;\varphi,\psi}(u)\leq C\mathfrak{p}(u)+C\sum_{(j,k)\in J}\|\widetilde{\phi}_{j,k}\mathcal{F}(\phi_{j,k} u)\|_{\mathcal{F}E},\quad u\in\DD'^E_L(U).
\end{equation}
Fix such $\varphi$ and $\psi$ and set $V:=\{0\}\cup\RR_+ \supp\psi$. Then $V$ is a closed cone in $\RR^n$ and $(\supp\varphi\times V)\cap L=\emptyset$. Pick $0<\varepsilon<1$ such that
$$
\big((\supp\varphi+\overline{B(0,\varepsilon)})\times ((V\cap \mathbb{S}^{n-1})+\overline{B(0,\varepsilon)})\big)\cap L=\emptyset\quad \mbox{and}\quad \supp\varphi+\overline{B(0,\varepsilon)}\subseteq U.
$$
Define the closed cones $V'$ and $V''$ by
$$
V'':=\RR_+\big((V\cap \mathbb{S}^{n-1})+\overline{B(0,\varepsilon)}\big)\cup\{0\}\quad \mbox{and}\quad V':=\RR_+\big((V\cap \mathbb{S}^{n-1})+\overline{B(0,\varepsilon/15)}\big)\cup\{0\}
$$
and the compact sets $K''$, $K'$ and $K$ by
$$
K'':=\supp\varphi+\overline{B(0,\varepsilon)},\quad K':=\supp\varphi+\overline{B(0,\varepsilon/15)}\quad \mbox{and}\quad K:=\supp\varphi.
$$
Of course, $K\times V\subseteq K'\times V'\subseteq K''\times V''\subseteq U\times V''$ and $(K''\times V'')\cap L=\emptyset$. In the proof of \cite[Proposition 3.7]{PP}, it is shown that there is a finite $J_0\subseteq \ZZ_+\times\ZZ_+$ such that
$$
K\times (V\backslash\{0\})\subseteq \bigcup_{(j,k)\in J_0} O_{j,k}\times V_{j,k}\subseteq K'\times V',
$$
and for each $(j,k), (j',k')\in J_0$ there is $m=m(j,k,j',k')\in\ZZ_+$ such that $O'_{j',k'}\subseteq O_{m,1}$ and $V'_{j,k}\subseteq V_{m,2}$. Set $\phi:=\sum_{(j,k)\in J_0}\phi_{j,k}\in \DD(\bigcup_{(j,k)\in J_0} O'_{j,k})$ and notice that $\phi\geq 1$ on $\bigcup_{(j,k)\in J_0}\overline{O_{j,k}}$. Hence $\varphi/\phi\in \DD(U)$. We also denote $\psi_{j,k}:=(\varphi/\phi)\phi_{j,k}\in \DD(O'_{j,k})$, $(j,k)\in J_0$, and notice that $\sum_{(j,k)\in J_0}\psi_{j,k}=\varphi$. Pick $\chi\in\XX_0(\RR^n)$ such that $0\leq \chi\leq 1$, $\chi=0$ on $B(0,3)$ and $\chi=1$ on $\RR^n\backslash B(0,4)$. Set $\widetilde{\phi}:=\sum_{(j,k)\in J_0}\widetilde{\phi}_{j,k}\in \XX_0(\RR^n)$ and notice that $\widetilde{\phi}\geq 1$ on $(\bigcup_{(j,k)\in J_0}\overline{V_{j,k}})\backslash B(0,2)$. Consequently, $(\chi\psi)/\widetilde{\phi}\in \XX_0(\RR^n)$, $\widetilde{\psi}_{j,k}:=((\chi\psi)/\widetilde{\phi})\widetilde{\phi}_{j,k}\in \XX_0(\RR^n)$, $(j,k)\in J_0$, and $\sum_{(j,k)\in J_0}\widetilde{\psi}_{j,k}=\chi\psi$. We infer
\begin{equation}\label{sum-for-bou-ftoplimbed111}
\psi\mathcal{F}(\varphi u)= (1-\chi)\psi\mathcal{F}(\varphi u)+\sum_{\substack{(j,k)\in J_0\\ (j',k')\in J_0}} \widetilde{\psi}_{j,k}\mathcal{F}(\psi_{j',k'} u).
\end{equation}
Since $\DD'(U)\rightarrow \DD(U)$, $v\mapsto (1-\chi)\psi\mathcal{F}(\varphi v)$, is well-defined and continuous, $v\mapsto \|(1-\chi)\psi\mathcal{F}(\varphi v)\|_{\mathcal{F}E}$ is a continuous seminorm on $\DD'(U)$. It remains to show that for each $(j,k),(j',k')\in J_0$, $\|\widetilde{\psi}_{j,k}\mathcal{F}(\psi_{j',k'} u)\|_{\mathcal{F}E}$ can be bounded by seminorms as in the right-hand side of \eqref{ine-sem-for-openmap111}. Let $(j,k),(j',k')\in J_0$ be arbitrary but fixed. In view of the above, we can find $m\in\ZZ_+$ such that $O'_{j',k'}\subseteq O_{m,1}$ and $V'_{j,k}\subseteq V_{m,2}$. Hence $\psi_{j',k'}=\psi_{j',k'}\phi_{m,1}$ and $\widetilde{\psi}_{j,k}=\widetilde{\psi}_{j,k}\widetilde{\phi}_{m,2}$ and thus
\begin{multline*}
\widetilde{\psi}_{j,k}\mathcal{F}(\psi_{j',k'} u) =(2\pi)^{-n}\widetilde{\psi}_{j,k}\widetilde{\phi}_{m,2}\left(\mathcal{F}\psi_{j',k'}*\left(\widetilde{\phi}_{m,1}\mathcal{F}(\phi_{m,1} u)\right)\right)\\
+(2\pi)^{-n}\widetilde{\psi}_{j,k}\widetilde{\phi}_{m,2}\left(\mathcal{F}\psi_{j',k'}*\left((1-\widetilde{\phi}_{m,1})\mathcal{F}(\phi_{m,1} u)\right)\right).
\end{multline*}
As $\DD'(U)\rightarrow \displaystyle \lim_{\substack{\longrightarrow\\ s\rightarrow\infty}}L^1_{\langle\cdot\rangle^{-s}}(\RR^n)$, $v\mapsto \mathcal{F}(\phi_{m,1} v)$, is well-defined and continuous, Lemma \ref{lem-for-est-oftheremininestwavefrps} implies that $u\mapsto \|(\mbox{second term})\|_{\mathcal{F}E}$ is a well-defined continuous seminorm on $\DD'(U)$ of $u$ (apply the lemma with $\psi_2:=\widetilde{\psi}_{j,k}\widetilde{\phi}_{m,2}$ and $\psi_1:=(1-\widetilde{\phi}_{m,1})$). For the first term, we infer
\begin{align*}
\left\|\widetilde{\psi}_{j,k}\widetilde{\phi}_{m,2}\left(\mathcal{F}\psi_{j',k'}*\left(\widetilde{\phi}_{m,1}\mathcal{F}(\phi_{m,1} u)\right)\right)\right\|_{\mathcal{F}E}&\leq C\left\|\mathcal{F}\psi_{j',k'}*\left(\widetilde{\phi}_{m,1}\mathcal{F}(\phi_{m,1} u)\right)\right\|_{\mathcal{F}E}\\
&\leq C\|\mathcal{F}\psi_{j',k'}\|_{L^1_{\omega_{\mathcal{F}E}}(\RR^n)}\|\widetilde{\phi}_{m,1}\mathcal{F}(\phi_{m,1} u)\|_{\mathcal{F}E},
\end{align*}
which completes the proof for the fact that $\mathcal{I}$ is a topological imbedding. The proof that the range of $\mathcal{I}$ is closed is analogous to the proof of the corresponding fact in \cite[Proposition 3.7]{PP} (cf. \cite[Remark 3.8]{PP}) and we omit it.
\end{proof}

Besides the completeness, one can derive other interesting properties of $\DD'^E_L(U)$ from Proposition \ref{pro-for-top-isk111} as in \cite{PP}; however, we will not need such facts.\\
\indent Next, we show that $\DD(U)$ is dense in $\DD'^E_L(U)$ when $E$ is a (TMIB)-space. This is verified by the next result whose proof is similar to \cite[Proposition 3.13]{PP}.

\begin{proposition}\label{seq-den-comsmf}
Let $\phi_j\in\DD(U)$, $j\in\ZZ_+$, be such that $0\leq \phi_j\leq 1$, $\phi_j=1$ on the compact $\{x\in U\,|\, |x|\leq j,\, \operatorname{dist}(x,\partial U)\geq 3/j\}$ and $\supp\phi_j\subseteq \{x\in U\,|\, \operatorname{dist}(x,\partial U)> 2/j\}$. Let $\chi$ be a nonnegative function in $\DD(\RR^n)$ satisfying $\supp\chi\subseteq \{x\in\RR^n\,|\, |x|\leq 1\}$ and $\int_{\RR^n} \chi(x) dx=1$ and set $\chi_j(x):=j^n\chi(jx)$, $x\in\RR^n$, $j\in\ZZ_+$. For each $j\in\ZZ_+$, the operators
\begin{equation}\label{ope-app-ide-simc}
P_j:\DD'(U)\rightarrow\DD(U),\quad P_ju=\chi_j*(\phi_j u),
\end{equation}
are well-defined and continuous. Furthermore, for every (TMIB)-space $E$ which satisfies $(iv)$ and every closed conic subset $L$ of $U\times (\RR^n\backslash\{0\})$, $\{P_j\}_{j\in\ZZ_+}$ is a bounded subset of $\mathcal{L}_b(\DD'^E_L(U))$ and $P_j\rightarrow \operatorname{Id}$ in $\mathcal{L}_{\sigma}(\DD'^E_L(U))$. In particular, $\DD(U)$ is sequentially dense in $\DD'^E_L(U)$.
\end{proposition}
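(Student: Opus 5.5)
We handle the three claims in turn: well-definedness and continuity of each $P_j$, uniform boundedness in $\mathcal{L}_b(\DD'^E_L(U))$, and pointwise convergence to the identity.

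\textbf{Continuity of each $P_j$.} For $v\in\DD'(U)$ the function $\chi_j*(\phi_jv)$ is smooth. Its support lies in $\supp\phi_j+\overline{B(0,1/j)}$, which is a compact subset of $\{\operatorname{dist}(x,\partial U)\geq 1/j\}\subseteq U$. Continuity $\DD'(U)\rightarrow\DD(U)$ follows because $\phi_jv$ ranges in $\EE'(K_j)$ for a fixed compact $K_j$. Convolution with the fixed test function $\chi_j$ maps $\EE'(K_j)$ continuously into $\DD_{K_j+\overline{B(0,1/j)}}$. Since $\DD(U)\subseteq\DD'^E_L(U)$ continuously, each $P_j$ is also continuous on $\DD'^E_L(U)$. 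It is also standard that $P_ju\rightarrow u$ in $\DD'(U)$, and that $\{P_j\}$ is equicontinuous on $\DD'(U)$. For the latter, note $\langle P_ju,\theta\rangle=\langle u,\phi_j(\check\chi_j*\theta)\rangle$, and for $\theta$ in a bounded subset of $\DD(U)$ the functions $\phi_j(\check\chi_j*\theta)$ stay in a bounded set of $\DD(U)$ once $j$ is large. This controls all continuous seminorms on $\DD'(U)$.

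\textbf{Uniform bound on the $E$-seminorms.} Fix $\varphi\in\DD(U)$ and $\psi\in\XX_0(\RR^n)$ with $(\supp\varphi\times\supp\psi)\cap L=\emptyset$. As in the proof of Proposition \ref{pro-for-top-isk111}, choose $\varepsilon$, compacts $K\subseteq K'\subseteq K''$ and cones $V\subseteq V'\subseteq V''$ with $(K''\times V'')\cap L=\emptyset$. Pick $\varphi_1\in\DD(U)$ equal to $1$ near $K'$ with $\supp\varphi_1\subseteq K''$, and $\psi_1\in\XX_0(\RR^n)$ equal to $1$ on $V'$ away from a ball with $\supp\psi_1\subseteq V''$.

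For $j$ large, $\phi_j=1$ on $K'$, and $\varphi\,\chi_j*(\phi_ju)=\varphi\,\chi_j*(\varphi_1u)$ whenever $1/j<\varepsilon/15$. Hence
\begin{equation*}
\psi\mathcal{F}(\varphi P_ju)=(2\pi)^{-n}\psi\big(\mathcal{F}\varphi*(\mathcal{F}\chi_j\cdot\psi_1\mathcal{F}(\varphi_1u))\big)+(2\pi)^{-n}\psi\big(\mathcal{F}\varphi*(\mathcal{F}\chi_j(1-\psi_1)\mathcal{F}(\varphi_1u))\big).
\end{equation*}
Here $\mathcal{F}\chi_j(\xi)=\mathcal{F}\chi(\xi/j)$ is bounded in $S^0_{1,0}$ uniformly in $j$ (it is not in $\XX$).

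For the second term, apply Lemma \ref{lem-for-est-oftheremininestwavefrps} with $\psi_2=\psi$ and with $(1-\psi_1)\mathcal{F}\chi_j$ in place of $\psi_1$. The estimates in its proof depend only on $L^\infty$-bounds, which are uniform in $j$. This gives a bound by a continuous seminorm on $\DD'(U)$, uniformly in $j$.

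For the first term, apply \eqref{con-module-ine} in $\mathcal{F}E$ to remove $\mathcal{F}\varphi*$. This leaves us to bound $\|\mathcal{F}\chi_j\,\psi_1\mathcal{F}(\varphi_1u)\|_{\mathcal{F}E}$ by $C\|\psi_1\mathcal{F}(\varphi_1u)\|_{\mathcal{F}E}$ uniformly in $j$. The latter quantity is a seminorm of $\DD'^E_L(U)$.

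\textbf{Main obstacle.} The key difficulty is showing that $\mathcal{F}\chi(\cdot/j)$ are Fourier multipliers of $E$ with uniformly bounded norms. Condition $(iv)$ only concerns $\XX$, and $\mathcal{F}E$ need not be solid, so this cannot come from a pointwise bound. The way around it is that multiplication by $\mathcal{F}\chi_j$ on the $\mathcal{F}E$ side is convolution by $\chi_j$ on the $E$ side. Since $\chi_j\geq0$, $\|\chi_j\|_{L^1}=1$, $\supp\chi_j\subseteq B(0,1)$ and $\omega_E$ is bounded on $B(0,1)$, \eqref{con-module-ine} gives
\begin{equation*}
\|\chi_j*e\|_E\leq\|\chi_j\|_{L^1_{\omega_E}}\|e\|_E\leq\sup_{|x|\leq1}\omega_E(x)\,\|e\|_E,
\end{equation*}
uniformly in $j$. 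So the multiplier bound is immediate, and $(iv)$ is needed only through Lemma \ref{lem-for-semwelds}. Together with the equicontinuity on $\DD'(U)$, this shows $\{P_j\}$ is equicontinuous, hence bounded in $\mathcal{L}_b(\DD'^E_L(U))$.

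\textbf{Convergence $P_j\rightarrow\operatorname{Id}$.} By equicontinuity (Banach--Steinhaus type argument) it suffices to prove $P_ju\rightarrow u$ on a dense subset, but density is what we are proving, so instead argue directly. In the decomposition above, the term $\psi\mathcal{F}(\varphi u)$ has the same structure with $\mathcal{F}\chi_j$ replaced by $1$. The error in the first term then reduces to $\|(\chi_j*e-e)\|_E$ with $e=\mathcal{F}^{-1}(\psi_1\mathcal{F}(\varphi_1u))\in E$. This tends to $0$ because $\chi_j$ is an approximate identity: for $e\in\SSS(\RR^n)$ convergence is clear, and it extends to all $e\in E$ by density of $\SSS$ in $E$ (this is where the (TMIB) assumption is used) together with the uniform bound just established.

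The second-term error tends to $0$ by dominated convergence inside the estimates of Lemma \ref{lem-for-est-oftheremininestwavefrps}, since $\mathcal{F}\chi(\xi/j)\rightarrow1$ locally uniformly with bounded derivatives. Convergence in $\DD'(U)$ is classical. Sequential density of $\DD(U)$ then follows.
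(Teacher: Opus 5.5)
Your proposal is correct and follows essentially the same route as the paper. You localize with a cutoff equal to $1$ near $\supp\varphi$ and split with a conic cutoff. The off-cone piece is handled by Lemma \ref{lem-for-est-oftheremininestwavefrps}, and multiplication by $\mathcal{F}\chi_j$ on $\mathcal{F}E$ is controlled through $\|\chi_j\|_{L^1_{\omega_E}}$; the paper phrases this as the multiplication-module bound with $\nu_{\mathcal{F}E}=\omega_E$, which is the same estimate. The convergence step reduces to $\chi_j*e\rightarrow e$ in $E$, which you prove by density rather than citing \cite[Corollary 1]{DPV}. You also prove equicontinuity, which is slightly stronger than the boundedness in $\mathcal{L}_b$ that the paper shows.

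One small correction to your commentary: $(iv)$ is not used only through Lemma \ref{lem-for-semwelds}. It is also what lets you drop the outer multiplier $\psi\in\XX_0(\RR^n)$ from the $\mathcal{F}E$-norm in the first term; this is the constant $C_1$ in the paper.
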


\begin{proof}
We point out that $P_j\rightarrow\operatorname{Id}$ in $\mathcal{L}_b(\DD'(U))$; this can be shown in the same way as in \cite[Proposition 3.13]{PP}. This proves the proposition when $L=U\times (\RR^n\backslash\{0\})$.\\
\indent Assume that $L^c\neq\emptyset$. In view of the above, to show the boundedness of $\{P_j\}_{j\in\ZZ_+}$ in $\mathcal{L}_b(\DD'^E_L(U))$ it remains to prove that for every bounded set $B$ in $\DD'^E_L(U)$ and every seminorm $\mathfrak{p}_{E;\varphi,\psi}$, it holds that $\sup_{j\in\ZZ_+}\sup_{u\in B}\mathfrak{p}_{E;\varphi,\psi}(P_j u)<\infty$. Let $B$ be a bounded subset of $\DD'^E_L(U)$ and let $\varphi\in\DD(U)\backslash\{0\}$ and $\psi\in\XX_0(\RR^n)\backslash\{0\}$ be such that $(\supp\varphi\times\supp\psi)\cap L=\emptyset$. Set $V:=\{0\}\cup\RR_+\supp\psi$. Then $V$ is a closed cone in $\RR^n$ and $(\supp\varphi\times V)\cap L=\emptyset$. A standard compactness argument shows that there are $\widetilde{\varphi}\in\DD(U)$ and closed cones $V'$ and $\widetilde{V}$ in $\RR^n$ such that $\widetilde{\varphi}=1$ on a neighbourhood of $\supp\varphi$, $V\backslash\{0\}\subseteq\operatorname{int}V'$, $V'\backslash\{0\}\subseteq \operatorname{int}\widetilde{V}$ and $(\supp\widetilde{\varphi}\times\widetilde{V})\cap L=\emptyset$. Pick $\widetilde{\psi}\in \XX_0(\RR^n)$ such that $\supp\widetilde{\psi}\subseteq \widetilde{V}$ and $\widetilde{\psi}=1$ on $V'\backslash B(0,1)$. There is $j_0\in\ZZ_+$ such that $u_j:=\chi_j*(\widetilde{\varphi} u)\in\DD(U)$, for all $u\in B$, $j\geq j_0$, and there is $j'\geq j_0$ such that $\phi_j=\widetilde{\varphi}=1$ on $\supp\varphi +\overline{B(0,2/j)}$, for all $j\geq j'$. Whence,
$$
\varphi P_ju-\varphi u_j=\varphi(\chi_j*((\phi_j-\widetilde{\varphi})u))=0,\quad j\geq j',\,\, u\in B.
$$
Thus, it suffices to show $\sup_{j\geq j'}\sup_{u\in B}\mathfrak{p}_{E;\varphi,\psi}(u_j)<\infty$. Write
\begin{equation*}
\psi\mathcal{F}(\varphi u_j)= (2\pi)^{-n}\psi\left(\mathcal{F}\varphi*\left(\widetilde{\psi}\mathcal{F}\chi_j\mathcal{F}(\widetilde{\varphi}u)\right)\right) +(2\pi)^{-n}\psi\left(\mathcal{F}\varphi*\left((1-\widetilde{\psi})\mathcal{F}\chi_j\mathcal{F}(\widetilde{\varphi}u)\right)\right)
\end{equation*}
Since $\mathcal{F}\chi_j=\mathcal{F}\chi(\cdot/j)$ and there is $l>0$ such that $\sup_{u\in B}\|\langle\cdot\rangle^{-l}\mathcal{F}(\widetilde{\varphi}u)\|_{L^{\infty}(\RR^n)}<\infty$, $\{\mathcal{F}\chi_j\mathcal{F}(\widetilde{\varphi}u)\,|\, u\in B,\, j\in\ZZ_+\}$ is a bounded subset of $\displaystyle \lim_{\substack{\longrightarrow\\ s\rightarrow\infty}}L^1_{\langle\cdot\rangle^{-s}}(\RR^n)$. Hence, Lemma \ref{lem-for-est-oftheremininestwavefrps} implies that the $\mathcal{F}E$-norm of the second term is uniformly bounded for $j\in\ZZ_+$ and $u\in B$. For the first term, we infer
\begin{align*}
\left\|\psi\left(\mathcal{F}\varphi*\left(\widetilde{\psi}\mathcal{F}\chi_j\mathcal{F}(\widetilde{\varphi}u)\right)\right)\right\|_{\mathcal{F}E} \leq C_1\|\mathcal{F}\varphi\|_{L^1_{\omega_{\mathcal{F}E}}(\RR^n)} \|\mathcal{F}\chi_j\|_{\mathcal{F}L^1_{\nu_{\mathcal{F}E}}(\RR^n)}\|\widetilde{\psi}\mathcal{F}(\widetilde{\varphi}u)\|_{\mathcal{F}E}.
\end{align*}
Since $\sup_{j\in\ZZ_+}\|\mathcal{F}\chi_j\|_{\mathcal{F}L^1_{\nu_{\mathcal{F}E}}(\RR^n)}=\sup_{j\in\ZZ_+}\|\chi_j\|_{L^1_{\nu_{\mathcal{F}E}}(\RR^n)} <\infty$, the right-hand side is uniformly bounded for $j\in\ZZ_+$ and $u\in B$. This completes the proof of the fact that $\{P_j\}_{j\in\ZZ_+}$ is bounded in $\mathcal{L}_b(\DD'^E_L(U))$.\\
\indent We now show that $P_j\rightarrow \operatorname{Id}$ in $\mathcal{L}_{\sigma}(\DD'^E_L(U))$. Since the convergence holds in $\mathcal{L}_{\sigma}(\DD'(U))$, it remains to show that for each $u\in\DD'^E_L(U)$ and $\varphi\in\DD(U)\backslash\{0\}$ and $\psi\in\XX_0(\RR^m)\backslash\{0\}$ satisfying $(\supp\times\supp\psi)\cap L=\emptyset$, it holds that $\mathfrak{p}_{E;\varphi,\psi}(P_ju-u)\rightarrow0$, as $j\rightarrow\infty$. Fix such $u$, $\varphi$ and $\psi$. Let $\widetilde{\varphi}$, $V$, $V'$, $\widetilde{V}$, $\widetilde{\psi}$ and $j'$ be as above. Again, we write $u_j:=\chi_j*(\widetilde{\varphi}u)$ and recall that $\varphi P_ju=\varphi u_j$, $j\geq j'$. As before, write
\begin{multline}\label{equ-con-infeofdels}
\psi\mathcal{F}(\varphi P_ju)-\psi\mathcal{F}(\varphi u) =(2\pi)^{-n}\psi\left(\mathcal{F}\varphi*\left(\widetilde{\psi}(\mathcal{F}\chi_j-1)\mathcal{F}(\widetilde{\varphi}u)\right)\right)\\
+(2\pi)^{-n}\psi\left(\mathcal{F}\varphi*\left((1-\widetilde{\psi})(\mathcal{F}\chi_j-1)\mathcal{F}(\widetilde{\varphi}u)\right)\right).
\end{multline}
There is $l>0$ such that $\|\langle\cdot\rangle^{-l}\mathcal{F}(\widetilde{\varphi}u)\|_{L^{\infty}(\RR^n)}<\infty$. Since $\mathcal{F}\chi_j=\mathcal{F}\chi(\cdot/j)$ and $\mathcal{F}\chi(0)=1$, dominated convergence implies that $(\mathcal{F}\chi_j-1)\mathcal{F}(\widetilde{\varphi}u)\rightarrow0$ in $L^1_{\langle\cdot\rangle^{-l-n-1}}(\RR^n)$ and Lemma \ref{lem-for-est-oftheremininestwavefrps} verifies that the second term in \eqref{equ-con-infeofdels} converges to $0$ in $\mathcal{F}E$. For the first term, since $\mathcal{F}^{-1}(\widetilde{\psi}\mathcal{F}(\widetilde{\varphi}u))\in E$, we infer
\begin{align*}
&\left\|\psi\left(\mathcal{F}\varphi*\left(\widetilde{\psi}(\mathcal{F}\chi_j-1) \mathcal{F}(\widetilde{\varphi}u)\right)\right)\right\|_{\mathcal{F}E}\\
&\leq C_2\|\mathcal{F}\varphi\|_{L^1_{\omega_{\mathcal{F}E}}(\RR^n)} \|(\mathcal{F}\chi_j-1)\widetilde{\psi}\mathcal{F}(\widetilde{\varphi}u)\|_{\mathcal{F}E}\\
&=C_2\|\mathcal{F}\varphi\|_{L^1_{\omega_{\mathcal{F}E}}(\RR^n)} \|\chi_j*\mathcal{F}^{-1}(\widetilde{\psi}\mathcal{F}(\widetilde{\varphi}u))- \mathcal{F}^{-1}(\widetilde{\psi}\mathcal{F}(\widetilde{\varphi}u))\|_E\rightarrow0,\quad \mbox{as}\quad j\rightarrow\infty,
\end{align*}
in view of \cite[Corollary 1]{DPV}. This completes the proof of the proposition.
\end{proof}

\section{The pullback on $\DD'^E_L$ by smooth maps of constant rank}\label{Sec2}

We are now ready to show our main results on the pullback by smooth maps. We first study the pullback by local diffeomorphisms on $\DD'^E_L$. For this purposes, we consider the following additional condition:
\begin{itemize}
\item[$(v)$] $E$ is locally diffeomorphism invariant, namely for every open set $O$, every diffeomorphism $f:O\rightarrow f(O)$ and every compact set $K$ in $f(O)$, the pullback $f^*:\EE'(f(O))\rightarrow\EE'(O)(\subseteq \EE'(\RR^n))$ restricts to a continuous map $f^*:E_K\rightarrow E$, where $E_K$ is the Banach subspace of $E$ containing the elements in $E$ supported by $K$.
\end{itemize}

\begin{lemma}\label{lem-for-diffinvorskl}
The (TMIB)-space $E$ satisfies $(v)$ if and only if $E'$ satisfies $(v)$.
\end{lemma}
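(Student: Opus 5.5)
The plan is a duality argument modelled on the proof of Lemma \ref{lem-for-tmbsjk}: the key identity is that the transpose of the pullback by a diffeomorphism is again a pullback, multiplied by a Jacobian. Fix an open set $O$, a diffeomorphism $f:O\rightarrow f(O)$ and a compact $K\subseteq f(O)$. Choose a compact neighbourhood $K_1$ of $f^{-1}(K)$ in $O$ and $\theta\in\DD(O)$ with $\theta=1$ on a neighbourhood of $f^{-1}(K)$ and $\supp\theta\subseteq K_1$. Choose also $\vartheta\in\DD(f(O))$ with $\vartheta=1$ near $K$.

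For $e\in E_K$ (with $E_K$ viewed inside $\EE'(f(O))$) and any test function $\varphi$, the definition of $f^*$ on $\EE'$ gives
\begin{equation*}
\langle f^*e,\varphi\rangle=\langle e,\vartheta\,|f^{-1\,'}|\,(\theta\varphi)\circ f^{-1}\rangle=\langle e,\vartheta\,(g^*(\theta\varphi))\rangle ,
\end{equation*}
where $g:=f^{-1}:f(O)\rightarrow O$ is a diffeomorphism. Here I first multiply $\varphi$ by $\theta$, then pull back by $g$ and absorb the smooth compactly supported factor $\vartheta|g'|$. Since $\vartheta|g'|\in\DD(\RR^n)\subseteq\mathcal{F}L^1_{\nu}(\RR^n)$, multiplication by it is bounded on $E$ and $E'$ by \eqref{mult-module-ine}. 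Likewise, multiplication by $\theta$ maps $E$ (or $E'$) continuously into the elements supported in $K_1$.

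\emph{Direction $E'$ satisfies $(v)$ $\Rightarrow$ $E$ satisfies $(v)$.} Take $e\in E_K$ and $\varphi\in\SSS(\RR^n)$. Hypothesis $(v)$ for $E'$, applied to $g$ and the compact $K_1\subseteq O=g(f(O))$, gives
\begin{equation*}
|\langle f^*e,\varphi\rangle|\leq \|e\|_E\|\vartheta|g'|g^*(\theta\varphi)\|_{E'}\leq C\|e\|_E\|\theta\varphi\|_{E'}\leq C'\|e\|_E\|\varphi\|_{E'}.
\end{equation*}
To conclude $f^*e\in E$ with $\|f^*e\|_E\leq C'\|e\|_E$, I need $E$ to be normed by $\SSS(\RR^n)$ inside $E'$. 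Since $E$ is (TMIB), $E=(E')'_{\sigma}$ restricted appropriately: the functional $\varphi\mapsto\langle f^*e,\varphi\rangle$ extends by density to $E''$'s predual side only if $\SSS$ is dense in $E'$, which may fail.

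The way around this is the approximation trick from Lemma \ref{lem-for-tmbsjk}. Take $\varphi_j\in\SSS(\RR^n)$ with $\operatorname{supp}\varphi_j$ in a fixed compact set and $\varphi_j\rightarrow e$ in $E$; these exist by cutting off with $\vartheta$ and mollifying, using \eqref{con-module-ine} and \eqref{mult-module-ine}. By the transposed estimate, applied to $e'\in E'$, I get $\|f^*(\varphi_j-\varphi_k)\|_E\leq C\|\varphi_j-\varphi_k\|_E$, because $\langle f^*\phi,e'\rangle=\langle\phi,\vartheta|g'|g^*(\theta e')\rangle$ and $E$ is normed by $E'$. So $f^*\varphi_j$ is Cauchy in $E$, it converges to $f^*e$ in $\SSS'$, and hence $f^*e\in E$ with the bound. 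Making this pairing identity rigorous for general $e'\in E'$ requires $(v)$ for $E'$ on elements supported in $K_1$, which is exactly why I multiply by $\theta$.

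\emph{Direction $E$ satisfies $(v)$ $\Rightarrow$ $E'$ satisfies $(v)$.} This is the easier direction. For $e'\in E'_K$ and $\varphi\in\SSS(\RR^n)$, I write $\langle f^*e',\varphi\rangle=\langle e',\vartheta|g'|g^*(\theta\varphi)\rangle$ and bound it by $C\|e'\|_{E'}\|\varphi\|_E$ using $(v)$ for $E$. Density of $\SSS$ in $E$ then extends the functional to $E$, with norm at most $C\|e'\|_{E'}$.

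I expect the main obstacle to be the technical justification that the formula $\langle f^*v,\varphi\rangle=\langle v,\vartheta|g'|g^*(\theta\varphi)\rangle$ holds for general $v\in E_K$ or $E'_K$ and $\varphi$ in $E'$ or $E$, rather than only for smooth data. I would handle this by approximating with compactly supported smooth functions, using the module properties \eqref{con-module-ine} and \eqref{mult-module-ine}, and by passing to limits in $\SSS'$.
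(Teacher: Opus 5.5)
Your proposal is correct and follows essentially the same route as the paper. The easy direction is the routine duality-plus-density argument. For the hard direction, the paper likewise mollifies $e\in E_K$ to get test functions supported in a fixed compact set, and shows $\{f^*(\chi_j*e)\}_j$ is Cauchy in $E$ by pairing with $e'\in E'$ and applying $(v)$ for $E'$ to $f^{-1}$ and a cut-off multiple of $e'$. It then identifies the limit with $f^*e$ via convergence in $\EE'(O)$.

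The differences are cosmetic. The paper gets continuity from the closed graph theorem, whereas you pass to the limit in $\|f^*\varphi_j\|_E\leq C\|\varphi_j\|_E$. Your preliminary direct-duality attempt and your flagged ``main obstacle'' are unnecessary, since the transpose identity is only ever used with one smooth argument, where it is just the definition of the pullback on distributions. One small fix: take $\vartheta=1$ near the common support of the $\varphi_j$, and $\theta=1$ near its preimage, not just near $K$ and $f^{-1}(K)$.
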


\begin{proof}
It is straightforward to show that $E$ satisfying $(v)$ implies that $E'$ does so. Assume that $E'$ satisfies $(v)$. Let $K$ be a compact set in $f(O)$. It is enough to show that $f^*(E_K)\subseteq E$ since this automatically implies $f^*(E_K)\subseteq E_{f^{-1}(K)}$ and the closed graph theorem implies that continuity of the map. Let $e\in E_K$ and let $\chi_j\in\DD(\RR^n)$, $j\in\ZZ_+$, be as in the statement of Proposition \ref{seq-den-comsmf}. There is $j_0\in\ZZ_+$ such that $\supp\chi_j*e\subseteq K'$, $j\geq j_0$, for some fixed compact set $K'\supseteq K$ in $f(O)$. Pick $\phi\in \DD(f(O))$ such that $\phi=1$ on a neighbourhood of $K'$. For $e'\in E'$, we infer
\begin{align*}
|\langle e',f^*(\chi_j*e)-f^*(\chi_k*e)\rangle|&=|\langle f^{-1\,*}(e'f^*(|f^{-1\,'}|\phi)),\chi_j*e-\chi_k*e\rangle|\\
&\leq \|f^{-1\,*}(e'f^*(|f^{-1\,'}|\phi))\|_{E'}\|\chi_j*e-\chi_k*e\|_E\\
&\leq C\|e'\|_{E'}\|\chi_j*e-\chi_k*e\|_E,
\end{align*}
whence $\|f^*(\chi_j*e)-f^*(\chi_k*e)\|_E\leq C\|\chi_j*e-\chi_k*e\|_E$. In view of \cite[Corollary 1]{DPV}, $\chi_j*e\rightarrow e$ in $E$ and consequently $\{f^*(\chi_j*e)\}_{j\in\ZZ_+}$ is a Cauchy sequence in $E$. Since it converges to $f^*e\in \EE'(O)$ in $\EE'(O)$, we deduce $f^*e\in E$ and the proof is complete.
\end{proof}

We point out that the $L^p$-Sobolev spaces $W^{r,p}(\RR^m)$, $1<p<\infty$, of order $r\in \RR$, satisfy the condition $(v)$ (this is a nontrivial but classical result and it essentially follows from \cite[Theorem 18.1.17, p. 81]{hor2}).\\
\indent Before we state the next result, we make the following observation. Let $f:O\rightarrow U$ be a local diffeomorphism with $O$ and $U$ open sets in $\RR^m$. Then the pullback $f^*:\mathcal{C}^{\infty}(U)\rightarrow \mathcal{C}^{\infty}(O)$, $f^*(u)=u\circ f$, uniquely extends to a well-defined and continuous mapping $f^*:\DD'(U)\rightarrow \DD'(O)$. This follows from \cite[Theorem 3.21]{PP} (cf. \cite[Remark 3.6]{PP}) by applying it with $L=U\times(\RR^n\backslash\{0\})$ since $\mathcal{N}_f=f(O)\times \{0\}$.

\begin{theorem}\label{pull-back-diff}
Let $O$ and $U$ be open sets in $\RR^m$, let $f:O\rightarrow U$ be a local diffeomorphism and let $L$ be a closed conic subset of $U\times (\RR^m\backslash\{0\})$. Let $E$ be a (TMIB) or a (DTMIB)-space which satisfies $(iv)$ and $(v)$. The pullback $f^*:\mathcal{C}^{\infty}(U)\rightarrow \mathcal{C}^{\infty}(O)$, $f^*(u)=u\circ f$, uniquely extends to a well-defined and continuous map $f^*:\DD'(U)\rightarrow \DD'(O)$ and the latter restricts to a well-defined and continuous map $f^*:\DD'^E_L(U)\rightarrow \DD'^E_{f^*L}(O)$. Consequently, if $f$ is a diffeomorphism, then $f^*:\DD'^E_L(U)\rightarrow \DD'^E_{f^*L}(O)$ is a topological isomorphism.
\end{theorem}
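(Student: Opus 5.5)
The first assertion, that $f^*$ extends uniquely and continuously to $f^*:\DD'(U)\rightarrow\DD'(O)$, was already observed before the statement via \cite[Theorem 3.21]{PP}. The remaining claims are that $f^*$ maps $\DD'^E_L(U)$ into $\DD'^E_{f^*L}(O)$ and is continuous there. For this it suffices to show the following: for every $\varphi\in\DD(O)$ and $\psi\in\XX_0(\RR^m)$ with $(\supp\varphi\times\supp\psi)\cap f^*L=\emptyset$, we have $\psi\mathcal{F}(\varphi f^*u)\in\mathcal{F}E$, together with a bound
\begin{equation*}
\mathfrak{p}_{E;\varphi,\psi}(f^*u)\leq C\mathfrak{p}(u)+C\sum_{i}\mathfrak{p}_{E;\varphi_i,\psi_i}(u),
\end{equation*}
where $\mathfrak{p}$ is a continuous seminorm on $\DD'(U)$ and the sum is finite over admissible pairs for $L$.

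Membership then follows from Lemma \ref{lem-for-semwelds}. The seminorms of $\DD'(O)$ are handled by the $\DD'$-continuity. The isomorphism statement for a diffeomorphism follows by applying the result to $f^{-1}$, since $(f^{-1})^*f^*L=L$.

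\textbf{Localization.} Using a partition of unity, I first reduce to $\supp\varphi$ lying in a small ball on which $f$ is a diffeomorphism onto its image. Then $\varphi f^*u=f^*(\phi u)\cdot\varphi$ for some $\phi\in\DD(f(O))$ with $\phi=1$ near $f(\supp\varphi)$. I will then try to mimic the classical proof \cite[Theorem 8.2.4]{hor} by microlocalizing $\phi u$. Set $V=\{0\}\cup\RR_+\supp\psi$. By compactness, pick a closed cone $W$ with $W\backslash\{0\}$ contained in a neighbourhood of ${}^tf'(x)^{-1}(V\backslash\{0\})$, for $x$ near $\supp\varphi$, such that $(f(\supp\widetilde\varphi)\times W)\cap L=\emptyset$, where $\widetilde\varphi$ is a slightly larger cutoff. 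Take $\chi\in\XX_0(\RR^m)$ with $\chi=1$ on $W$ away from the origin and support in a slightly larger cone still disjoint from $L$ over $\supp\phi$. Split
\begin{equation*}
\phi u=\chi(D)(\phi u)+(1-\chi)(D)(\phi u)=:u_1+u_2.
\end{equation*}
The function $\mathcal{F}u_1=\chi\mathcal{F}(\phi u)$ lies in $\mathcal{F}E$, with norm a seminorm of $\DD'^E_L(U)$. However, $u_1$ is not compactly supported, so I cut off again: write $u_1=\theta u_1+(1-\theta)u_1$ with $\theta\in\DD(f(O))$ equal to $1$ near $f(\supp\varphi)$. The term $\theta u_1=\theta\,\chi(D)(\phi u)$ lies in $E_K$ by \eqref{mult-module-ine}. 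By $(v)$, $\varphi f^*(\theta u_1)\in E$, so $\psi(D)$ of it lies in $E$ by $(iv)$. The term $\varphi f^*((1-\theta)u_1)$ should be smooth, with $\DD'$-continuous dependence: $\chi(D)$ is pseudolocal, since its kernel is smooth off the diagonal and $\phi u$ has support separated from $\supp(1-\theta)$ near $f(\supp\varphi)$. Actually one must choose $\phi$ supported where $\theta=1$, so the kernel is evaluated off the diagonal.

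\textbf{Main obstacle: the term $u_2$.} Here $\mathcal{F}(\phi u)$ is cut to the directions where $1-\chi\neq0$, and I must show that $\psi\mathcal{F}(\varphi f^*(\theta u_2))$ is rapidly decreasing, continuously in $u\in\DD'(U)$. Writing
\begin{equation*}
\mathcal{F}(\varphi f^*(\theta u_2))(\xi)=(2\pi)^{-m}\int e^{i(f(x)\eta-x\xi)}\varphi(x)\theta(f(x))(1-\chi(\eta))\mathcal{F}(\phi u)(\eta)\,d\eta\,dx,
\end{equation*}
the phase gradient in $x$ is ${}^tf'(x)\eta-\xi$. For $\xi\in\supp\psi$ and $\eta\in\supp(1-\chi)$ large, the choice of $W$ gives $|{}^tf'(x)\eta-\xi|\geq c(|\xi|+|\eta|)$. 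Repeated integration by parts in $x$ therefore yields decay $\langle\xi\rangle^{-N}\langle\eta\rangle^{-N}$, which beats the polynomial growth of $\mathcal{F}(\phi u)$ (bounded via a $\DD'$-seminorm). This gives a $\SSS$-bound, hence an $\mathcal{F}E$-bound. The delicate points are uniformity of the cone-separation constants over $x\in\supp\varphi$, and the region of small $|\eta|$ where $\chi$ is not homogeneous; the latter is harmless because there the integrand is compactly supported in $\eta$. The term $(1-\theta)u_2$ is again smooth by pseudolocality.

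Combining the pieces gives the desired seminorm estimate and completes the proof.
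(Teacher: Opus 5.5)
Your proof is correct. For the term that is not known to lie in $\mathcal{F}E$ (your $u_2$, the paper's $I_{1;j}$), you use the same non-stationary phase bound on $\widetilde I_{\psi_j\varphi}$ as the paper; your split is centred on ${}^tf'(x)^{-1}V$ rather than on $L$, which is a mirror image of the paper's choice.

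For the microlocally good term you take a genuinely shorter route. The paper keeps the oscillatory representation \eqref{ide-for-fourtransofpbfks} and proves $I_{2;j}\in\mathcal{F}E$ by duality. In the (DTMIB) case it tests against $\SSS\subseteq\mathcal{F}^{-1}E_0$; in the (TMIB) case it tests against $(\mathcal{F}E)'$. This requires transferring $(iv)$ and $(v)$ to the predual or dual (Lemmas \ref{lem-for-tmbsjk} and \ref{lem-for-diffinvorskl}). In the (TMIB) case it also requires restricting to $u\in\mathcal{C}^{\infty}(U)$, where $I_{2;j}\in\SSS$ is already known so the duality pairing gives the norm bound, and then invoking Proposition \ref{seq-den-comsmf}. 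You instead note that $\chi(D)(\phi u)\in E$, cut it off by $\theta$ to land in $E_K$, and apply $(v)$, \eqref{mult-module-ine} and $(iv)$ directly. This treats (TMIB) and (DTMIB) spaces uniformly and needs none of the auxiliary lemmas or the density result. What the paper's route buys is continuity of method with Theorem \ref{the-for-pulbknosklskhtrk}: there $f$ is not a diffeomorphism, no invariance of type $(v)$ is available, and testing against the dual is the only option.

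Three points to tidy up:
\begin{enumerate}
\item The non-stationary phase bound needs ${}^tf'(x)^{-1}(V\backslash\{0\})\subseteq\operatorname{int}W$ for all $x\in\supp\varphi$. As written, you only say $W$ is contained in a neighbourhood of this set, which is the wrong inclusion. Making this compatible with $(\supp\phi\times\supp\chi)\cap L=\emptyset$ is what forces the partition of unity to be fine, not only the requirement that $f$ be a diffeomorphism on each piece.
\item The terms with $1-\theta$ vanish identically, because $\varphi\cdot((1-\theta)\circ f)=0$. No pseudolocality argument is needed.
\item The polynomial growth order of $\mathcal{F}(\phi u)$ is not uniform on $\DD'(U)$. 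So your $u_2$ estimate gives boundedness on bounded sets, and continuity then follows because $\DD'(U)$ is bornological, as in the paper.
\end{enumerate}
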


\begin{proof}
We already pointed out that the pullback extends to a well-defined and continuous map $f^*:\DD'(U)\rightarrow \DD'(O)$. To prove the second part, for each $y\in U$, set $L_y:=\{\eta\in\RR^m\backslash\{0\}\,|\, (y,\eta)\in L\}$; $L_y$ is a (possibly empty) closed cone in $\RR^m\backslash\{0\}$. Notice that $\mathcal{N}_f=f(O)\times \{0\}$ and thus $\mathcal{N}_f\cap L=\emptyset$. Arguing as in the beginning of the proof of \cite[Theorem 3.21]{PP}, one shows that this implies that for every $x_0\in O$ and every open cone $G$ in $\RR^m\backslash\{0\}$ satisfying ${}^tf'(x_0)L_{f(x_0)}\subseteq G$ there are a relatively compact open set $O_0\ni x_0$ satisfying $\overline{O_0}\subseteq O$, closed cones $V'$ and $V$ in $\RR^m\backslash\{0\}$ satisfying $L_{f(x_0)}\subseteq \operatorname{int} V'\subseteq V' \subseteq \operatorname{int}V$ and ${}^tf'(x_0)V\subseteq G$ and an open neighbourhood $U_0\subseteq U$ of $f(x_0)$ which satisfy the following conditions: $f(\overline{O_0})\subseteq U_0$, $\bigcup_{y\in U_0} L_y \subseteq \operatorname{int}V'$, ${}^tf'(x)\eta\in G$ for all $x\in \overline{O_0}$, $\eta\in V$, and
\begin{equation}\label{ine-for-stati-phase-methd}
|{}^t f'(x)\eta-\xi|\geq \varepsilon(|\xi|+|\eta|),\quad x\in\overline{O_0},\, \xi\in \RR^m\backslash G,\, \eta\in V\cup\{0\},
\end{equation}
for some small enough $\varepsilon>0$. Since $f$ is a local diffeomorphism, we can take $O_0$ to be small enough so that $f$ is a diffeomorphism from an open neighbourhood of $\overline{O_0}$ onto an open subset of $U_0$.\\
\indent To show that $f^*$ restricts to a well-defined and continuous map $f^*:\DD'^E_L(U)\rightarrow \DD'^E_{f^*L}(O)$, it is enough to prove that for each $\varphi\in \DD(O)\backslash\{0\}$ and $\chi\in\XX_0(\RR^m)$ satisfying $(\supp\varphi\times\supp\chi)\cap f^*L=\emptyset$, $\chi\mathcal{F}(\varphi f^*u)\in\mathcal{F}E$ when $u\in\DD'^E_L(U)$ and to bound $\mathfrak{p}_{E;\varphi,\chi}(f^*u)=\|\chi\mathcal{F}(\varphi f^*u)\|_{\mathcal{F}E}$ (the first part verifies that for each continuous seminorm $\mathfrak{p}$ on $\DD'(O)$, $\mathfrak{p}(f^*u)$ is bounded by a continuous seminorm on $\DD'(U)$ of $u$). Let $\varphi\in \DD(O)\backslash\{0\}$ and $\chi\in\XX_0(\RR^m)$ satisfy $(\supp\varphi\times\supp\chi)\cap f^*L=\emptyset$. Then $G_1:=\{0\}\cup\RR_+\supp\chi$ is a closed cone in $\RR^m$ that satisfies $(\supp\varphi\times G_1)\cap f^*L=\emptyset$ (when $f^*L=O\times(\RR^m\backslash\{0\})$, we take $\chi=0$ and thus $G_1=\{0\}$). The cone $G:=\RR^m\backslash G_1$ is open in $\RR^m\backslash\{0\}$ and satisfies ${}^tf'(x)L_{f(x)}\subseteq G$, $x\in\supp\varphi$. We apply the above construction for this $G$ and each $x\in \supp\varphi$ to obtain the open neighbourhoods $O_x$ and $U_x$ of $x$ and $f(x)$ respectively having the above properties. As $\supp\varphi$ is compact, there are finitely many such $O_j$, $j=1,\ldots,l$, whose union covers $\supp\varphi$. We denote by $U_j$, $j=1,\ldots,l$, the corresponding subsets of $U$ and by $V'_j$ and $V_j$, $j=1,\ldots,l$, the corresponding closed cones in $\RR^m\backslash\{0\}$ in the above construction. Let $\psi_j\in\DD(O_j)$, $0\leq \psi_j\leq 1$, $j=1,\ldots, l$, be such that $\sum_{j=1}^l\psi_j=1$ on a neighbourhood of $\supp\varphi$. Pick $\phi_j\in\DD(U_j)$ such that $\phi_j=1$ on a neighbourhood of $f(\overline{O_j})$, $j=1,\ldots,l$. Choose $\widetilde{\chi}_j\in\XX_0(\RR^m)$ such that $\supp\widetilde{\chi}_j\subseteq \RR^m\backslash V'_j$ and $\widetilde{\chi}_j=1$ on $\{\eta\in\RR^m\backslash V_j\,|\, |\eta|>1\}$. For $u\in\mathcal{C}^{\infty}(U)$, as in the proof of \cite[Theorem 3.21]{PP}, write
\begin{equation}\label{ide-for-fourtransofpbfks}
\mathcal{F}(\varphi f^*u)(\xi)=\frac{1}{(2\pi)^m}\sum_{j=1}^l\int_{\RR^m}\mathcal{F}(\phi_j u)(\eta)\widetilde{I}_{\psi_j\varphi}(\xi,\eta)d\eta,\quad \xi\in\RR^m,
\end{equation}
where, $\widetilde{I}_{\psi_j\varphi}$ stands for the function
\begin{equation*}
\widetilde{I}_{\psi_j\varphi}:\RR^{2m}\rightarrow \CC,\quad \widetilde{I}_{\psi_j\varphi}(\xi,\eta):=\int_O e^{i (f(x)\eta-x\xi)} \psi_j(x)\varphi(x)dx;
\end{equation*}
clearly $\widetilde{I}_{\psi_j\varphi}\in\DD_{L^{\infty}}(\RR^{2m})$. We need the following facts for $\widetilde{I}_{\psi_j\varphi}$, $j=1,\ldots,l$.\\
\\
\noindent \textbf{Claim 1} For every $k\in\NN$ and $\alpha,\beta\in\NN^m$,
\begin{align}
&\sup_{x,\xi\in\RR^m}\langle\xi\rangle^{-2k}\langle\eta\rangle^{2k} |\partial^{\alpha}_{\xi}\partial^{\beta}_{\eta}\widetilde{I}_{\psi_j\varphi}(\xi,\eta)|<\infty,\quad j=1,\ldots,l;\label{est-cla-forskls1}\\
&\sup_{x,\xi\in\RR^m}\langle\eta\rangle^{-2k}\langle\xi\rangle^{2k} |\partial^{\alpha}_{\xi}\partial^{\beta}_{\eta}\widetilde{I}_{\psi_j\varphi}(\xi,\eta)|<\infty,\quad j=1,\ldots,l.\label{est-cla-forskls2}
\end{align}
\\
\indent We defer its proof for later and continue with the proof of the theorem. This fact implies that even when $u\in\DD'(U)$, the integrals on the right-hand side of \eqref{ide-for-fourtransofpbfks} are absolutely convergent uniformly for $\xi$ in a compact set and each of them gives a continuous function of $\xi$ with polynomial growth. Since $\mathcal{C}^{\infty}(U)$ is sequentially dense $\DD'(U)$, we deduce that \eqref{ide-for-fourtransofpbfks} is valid for all $u\in\DD'(U)$: indeed, for a sequence $\{u_k\}_{k\in\ZZ_+}\subseteq \mathcal{C}^{\infty}(U)$ converging to $u\in\DD'(U)$, $\mathcal{F}(\varphi f^*u_k)\rightarrow\mathcal{F}(\varphi f^*u)$ in view of the continuity of $f^*:\DD'(U)\rightarrow\DD'(O)$ while the integrals in \eqref{ide-for-fourtransofpbfks} with $u_k$ in place of $u$ converge because of the dominated convergence theorem in view of \eqref{est-cla-forskls1}. Let $u\in\DD'^E_L(U)$. Write
\begin{equation}\label{ine-for-bound-semin-osk111}
\mathcal{F}(\varphi f^*u)(\xi)=\frac{1}{(2\pi)^m}\sum_{j=1}^l(I_{1;j}(\xi)+I_{2;j}(\xi)),\quad \xi\in\RR^m,
\end{equation}
with
\begin{align*}
I_{1;j}(\xi)&:=\int_{\RR^m}(1-\widetilde{\chi}_j(\eta))\mathcal{F}(\phi_j u)(\eta)\widetilde I_{\psi_j\varphi}(\xi,\eta)d\eta,\\
I_{2;j}(\xi)&:=\int_{\RR^m}\widetilde{\chi}_j(\eta)\mathcal{F}(\phi_j u)(\eta)\widetilde{I}_{\psi_j\varphi}(\xi,\eta)d\eta;
\end{align*}
of course, $I_{1;j}$ and $I_{2;j}$ are smooth functions with polynomial growth. Our goal is to show $\chi\mathcal{F}(\varphi f^*u)\in\mathcal{F}E$ and to estimate $\mathfrak{p}_{E;\varphi,\chi}(f^*u)=\|\chi\mathcal{F}(\varphi f^*u)\|_{\mathcal{F}E}$. The stationary phase method \cite[Theorem 7.7.1, p. 216]{hor} together with \eqref{ine-for-stati-phase-methd} implies that for every $\alpha\in\NN^m$ and $N>0$ there is $C_{N,\alpha}>0$ such that $|\widetilde{I}_{\psi_j\varphi}(\xi,\eta)|\leq C_{N,\alpha}(1+|\xi|+|\eta|)^{-N}$, $\xi\in G_1$, $\eta\in V_j$. Consequently,
\begin{equation*}
\DD'(U)\rightarrow\SSS(\RR^m),\quad v\mapsto \chi\int_{\RR^m}(1-\widetilde{\chi}_j(\eta))\mathcal{F}(\phi_j v)(\eta)\widetilde I_{\psi_j\varphi}(\cdot,\eta)d\eta,
\end{equation*}
is well-defined and maps bounded sets into bounded sets (cf. \eqref{est-cla-forskls2}). Since $\DD'(U)$ is bornological, the map is continuous; whence $\|\chi I_{1;j}\|_{\mathcal{F}E}$ is a continuous seminorm on $\DD'(U)$ of $u$. In view of \eqref{ine-for-bound-semin-osk111}, it remains to show that $\chi I_{2;j}\in\mathcal{F}E$ and to bound $\|\chi I_{2;j}\|_{\mathcal{F}E}$. We first consider the case when $E$ is a (DTMIB)-space; i.e. $E=E_0'$ with $E_0$ a (TMIB)-space which satisfies $(iv)$ and $(v)$ in view of Lemma \ref{lem-for-tmbsjk} and Lemma \ref{lem-for-diffinvorskl}. For $\vartheta\in \SSS(\RR^m)$, we infer (cf. \eqref{est-cla-forskls1})
\begin{equation*}
\langle I_{2;j},\vartheta\rangle=\iint_{\RR^{2m}}\widetilde{\chi}_j(\eta)\mathcal{F}(\phi_j u)(\eta)\widetilde{I}_{\psi_j\varphi}(\xi,\eta)\vartheta(\xi)d\xi d\eta=\langle \widetilde{\chi}_j\mathcal{F}(\phi_j u),v\rangle
\end{equation*}
where $v(\eta):=\int_{\RR^m}\widetilde{I}_{\psi_j\varphi}(\xi,\eta)\vartheta(\xi)d\xi$ and $v\in\SSS(\RR^m)$ in view of \eqref{est-cla-forskls1}. Notice that
\begin{align*}
v(\eta)&=\langle \vartheta,\mathcal{F}(e^{if(\cdot)\eta}\psi_j\varphi)\rangle =\langle\psi_j\varphi\mathcal{F}\vartheta,e^{if(\cdot)\eta}\rangle=\left\langle f^{-1\,*}\left((|f^{-1\,'}|\circ f)\psi_j\varphi\mathcal{F}\vartheta\right),e^{i\,\cdot\,\eta}\right\rangle\\
&= (2\pi)^m\mathcal{F}^{-1}\left(f^{-1\,*}\left((|f^{-1\, '}|\circ f)\psi_j\varphi\mathcal{F}\vartheta\right)\right)(\eta).
\end{align*}
The property $(v)$ for $E_0$ implies $\|v\|_{\mathcal{F}^{-1}E_0}\leq C'\|\mathcal{F}\vartheta\|_{E_0}=C'\|\vartheta\|_{\mathcal{F}^{-1}E_0}$. Since $(\mathcal{F}^{-1}E_0)'=\mathcal{F}E$ isometrically and $U_j\times(\RR^m\backslash V'_j)\cap L=\emptyset$, we have
\begin{equation*}
|\langle I_{2;j},\vartheta\rangle|\leq \|\widetilde{\chi}_j\mathcal{F}(\phi_j u)\|_{\mathcal{F}E}\|v\|_{\mathcal{F}^{-1}E_0}\leq C'\|\widetilde{\chi}_j\mathcal{F}(\phi_j u)\|_{\mathcal{F}E}\|\vartheta\|_{\mathcal{F}^{-1}E_0}.
\end{equation*}
As $\SSS(\RR^m)$ is dense in $\mathcal{F}^{-1}E_0$, we deduce $I_{2;j}\in \mathcal{F}E$ and $\|I_{2;j}\|_{\mathcal{F}E}\leq C'\|\widetilde{\chi}_j\mathcal{F}(\phi_j u)\|_{\mathcal{F}E}$; consequently, $\chi I_{2;j}\in\mathcal{F}E$ and $\|\chi I_{2;j}\|_{\mathcal{F}E}\leq C'_1\|I_{2;j}\|_{\mathcal{F}E}\leq C'C'_1\mathfrak{p}_{E;\phi_j,\widetilde{\chi}_j}(u)$.\\
\indent Assume now $E$ is a (TMIB)-space. Since $\mathcal{C}^{\infty}(U)$ is dense in $\DD'^E_L(U)$ (in view of Proposition \ref{seq-den-comsmf}), it suffices to bound $\mathfrak{p}_{E;\varphi,\chi}(f^*u)=\|\chi\mathcal{F}(\varphi f^*u)\|_{\mathcal{F}E}$ when $u\in\mathcal{C}^{\infty}(U)$ (of course, $\chi\mathcal{F}(\varphi f^*u)\in\SSS(\RR^m)$ when $u\in\mathcal{C}^{\infty}(U)$). In view of the above, we only need to show that $\chi I_{2;j}\in\mathcal{F}E$ and to bound $\|\chi I_{2;j}\|_{\mathcal{F}E}$. For this purpose, we make the following\\
\\
\noindent \textbf{Claim 2} For each $w\in\SSS'(\RR^m)$, the function $\RR^m\rightarrow \CC$, $\eta\mapsto \langle w, \widetilde{I}_{\psi_j\varphi}(\cdot,\eta)\rangle$, is continuous and with polynomial growth. When $u\in\mathcal{C}^{\infty}(U)$, $I_{2;j}\in\SSS(\RR^m)$ and
\begin{equation*}
\langle w,I_{2;j}\rangle=\int_{\RR^m}\widetilde{\chi}_j(\eta)\mathcal{F}(\phi_j u)(\eta)\langle w,\widetilde{I}_{\psi_j\varphi}(\cdot,\eta)\rangle d\eta,\quad w\in\SSS'(\RR^m).
\end{equation*}
\\
\indent We defer its proof for later and continue with the proof of the theorem. For $e'\in (\mathcal{F}E)'=\mathcal{F}^{-1}E'$, the claim implies $\langle e', I_{2;j}\rangle=\langle v,\widetilde{\chi}_j\mathcal{F}(\phi_j u)\rangle$, where $v(\eta):=\langle e',\widetilde{I}_{\psi_j\varphi}(\cdot,\eta)\rangle$, $\eta\in\RR^m$, is a continuous function with polynomial growth. Analogously as above, one shows that
\begin{equation*}
v(\eta)= (2\pi)^m\mathcal{F}^{-1}\left(f^{-1\,*}\left((|f^{-1\, '}|\circ f)\psi_j\varphi\mathcal{F}e'\right)\right)(\eta),\quad \eta\in\RR^m.
\end{equation*}
Consequently, $v\in\mathcal{F}^{-1}E'=(\mathcal{F}E)'$ and $\|v\|_{(\mathcal{F}E)'}\leq C''\|\mathcal{F}e'\|_{E'}=C''\|e'\|_{(\mathcal{F}E)'}$. We infer $|\langle e', I_{2;j}\rangle|\leq C''\|e'\|_{(\mathcal{F}E)'}\|\widetilde{\chi}_j\mathcal{F}(\phi_j u)\|_{\mathcal{F}E}$. Whence, $\|\chi I_{2;j}\|_{\mathcal{F}E}\leq C''_1\|I_{2;j}\|_{\mathcal{F}E}\leq C''_1C''\mathfrak{p}_{E;\phi_j,\widetilde{\chi}_j}(u)$ which completes the proof since $U_j\times(\RR^m\backslash V'_j)\cap L=\emptyset$.\\
\\
\noindent \textbf{Proof of Claim 1.} We only show \eqref{est-cla-forskls1} as the proof of \eqref{est-cla-forskls2} is analogous. Since $\supp\psi_j\subseteq O_j$ and $f$ restricts to a diffeomorphism on $O_j$, we can change variables to obtain
\begin{equation*}
\widetilde{I}_{\psi_j\varphi}(\xi,\eta)=\int_{f(O_j)} e^{i y\eta}e^{-if^{-1}(y)\xi} \vartheta(y)dy,\quad \mbox{with}\quad \vartheta:=|f^{-1\, '}| (\psi_j\varphi)\circ f^{-1}\in\DD(f(O_j)).
\end{equation*}
Consequently, for $k\in\NN$ and $\alpha,\beta\in\NN^m$, we infer
\begin{equation*}
\langle\eta\rangle^{2k}|\partial^{\alpha}_{\xi}\partial^{\beta}_{\eta}\widetilde{I}_{\psi_j\varphi}(\xi,\eta)|\leq\int_{f(O_j)} \left|(\operatorname{Id}-\Delta_y)^k\left(e^{-if^{-1}(y)\xi} (f^{-1}(y))^{\alpha}y^{\beta}\vartheta(y)\right)\right|dy\leq C\langle\xi\rangle^{2k},
\end{equation*}
which competes the proof of the claim.\\
\\
\noindent \textbf{Proof of Claim 2.} It is straightforward to verify that the map $\RR^m\rightarrow\SSS(\RR^m)$, $\eta\mapsto e^{if(\cdot)\eta}\psi_j\varphi$, is well-defined and continuous. Consequently, the map $\RR^m\rightarrow\SSS(\RR^m)$, $\eta\mapsto \widetilde{I}_{\psi_j\varphi}(\cdot,\eta)=\mathcal{F}(e^{if(\cdot)\eta}\psi_j\varphi)$, is well-defined and continuous, which, in turn, shows that for each $w\in\SSS'(\RR^m)$, $\eta\mapsto \langle w, \widetilde{I}_{\psi_j\varphi}(\cdot,\eta)\rangle$ is a continuous function on $\RR^m$. The fact that the latter has polynomial growth follows from the following bounds
\begin{equation*}
|\langle w,\widetilde{I}_{\psi_j\varphi}(\cdot,\eta)\rangle|=|\langle \mathcal{F}w,e^{i f(\cdot)\eta}\psi_j\varphi\rangle|\leq C_1\sup_{|\alpha|\leq k}\|\langle\cdot\rangle^k\partial^{\alpha}(e^{if(\cdot)\eta}\psi_j\varphi)\|_{L^{\infty}(\RR^m)}\leq C_2\langle\eta\rangle^k.
\end{equation*}
The fact that $I_{2;j}\in\SSS(\RR^m)$ follows from \eqref{est-cla-forskls2}. To show the very last equality, notice first that it trivially holds when $w\in\SSS(\RR^m)$. When $w\in\SSS'(\RR^m)$, pick $w_l\in\SSS(\RR^m)$, $l\in\ZZ_+$, such that $w_l\rightarrow w$ in $\SSS'(\RR^m)$. The set $\{\mathcal{F}w_l\}_{l\in\ZZ_+}$ is an equicontinuous subset of $\SSS'(\RR^m)$ since it is bounded in $\SSS'(\RR^m)$ and $\SSS(\RR^m)$ is barrelled. Hence, there are $C'_0>0$ and $k_0\in\ZZ_+$ such that
\begin{equation*}
|\langle w_l,\widetilde{I}_{\psi_j\varphi}(\cdot,\eta)\rangle|=|\langle \mathcal{F}w_l,e^{i f(\cdot)\eta}\psi_j\varphi\rangle|\leq C'_0\sup_{|\alpha|\leq k_0}\|\langle\cdot\rangle^{k_0}\partial^{\alpha}(e^{if(\cdot)\eta}\psi_j\varphi)\|_{L^{\infty}(\RR^m)}\leq C'_0C''\langle\eta\rangle^{k_0}
\end{equation*}
for all $l\in\ZZ_+$. Hence, dominated convergence implies
\begin{equation*}
\lim_{l\rightarrow\infty}\int_{\RR^m}\widetilde{\chi}_j(\eta)\mathcal{F}(\phi_j u)(\eta)\langle w_l,\widetilde{I}_{\psi_j\varphi}(\cdot,\eta)\rangle d\eta=\int_{\RR^m}\widetilde{\chi}_j(\eta)\mathcal{F}(\phi_j u)(\eta)\langle w,\widetilde{I}_{\psi_j\varphi}(\cdot,\eta)\rangle d\eta.
\end{equation*}
As $\langle w_l,I_{2;j}\rangle\rightarrow\langle w,I_{2;j}\rangle$ as $l\rightarrow\infty$, this completes the proof of the claim.
\end{proof}

\begin{remark}\label{rem-inv-dkslts}
Let the space $E$ be as in the theorem. If $f:O\rightarrow U$ is a diffeomorphism and $u\in\DD'(U)$, we can apply the theorem with $L=WF^E(u)$ to verify that $WF^E(f^*u)=f^*WF^E(u)$.
\end{remark}

One can employ the diffeomorphism invariance from Remark \ref{rem-inv-dkslts} and Theorem \ref{pull-back-diff} to show that $WF^E(u)$ and the space $\DD'^E_L$ can be invariantly defined on smooth manifolds, however we will not pursue this line of enquiry here.\\
\indent As we pointed out above, the theorem is applicable when $E=W^{r,p}(\RR^m)$, $1<p<\infty$, $r\in\RR$. In this case, we simply write $\DD'^{r,p}_L(O)$ for $\DD'^E_L(O)$ and we will denote the seminorms $\mathfrak{p}_{E;\varphi,\psi}$ by $\mathfrak{p}_{r,p;\varphi,\psi}$; notice that $\mathfrak{p}_{r,p;\varphi,\psi}(u)=\|\langle\cdot\rangle^r\psi\mathcal{F}(\varphi u)\|_{\mathcal{F}L^p(\RR^m)}$, $u\in\DD'^{r,p}_L(O)$. In the spacial case $L=\emptyset$, $\DD'^{r,p}_{\emptyset}(O)=W^{r,p}_{\operatorname{loc}}(O)$ topologically.\\
\indent We are now ready to state and prove the main result of the article: it generalises the constant rank case in \cite[Theorem 3.21]{PP} from $L^2$-Sobolev wave fronts to $L^p$-Sobolev wave fronts.

\begin{theorem}\label{the-for-pulbknosklskhtrk}
Let $O$ and $U$ be open sets in $\RR^m$ and $\RR^n$ respectively, let $f:O\rightarrow U$ be a smooth map of constant rank $k\in\ZZ_+$ and let $L$ be a closed conic subset of $U\times (\RR^n\backslash\{0\})$ satisfying $L\cap\mathcal{N}_f=\emptyset$. For each $p\in(1,\infty)$, the pullback $f^*:\mathcal{C}^{\infty}(U)\rightarrow \mathcal{C}^{\infty}(O)$, $f^*(u)=u\circ f$, uniquely extends to a well-defined and continuous mapping $f^*:\DD'^{r_2,p}_L(U)\rightarrow \DD'^{r_1,p}_{f^*L}(O)$ whenever $r_2-r_1>(n-k)/p$ and $r_2>(n-k)/p$. When $f$ is a submersion, this is valid for all $r_2\geq r_1$.
\end{theorem}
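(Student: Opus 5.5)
The plan is to reduce, via the constant rank theorem and Theorem \ref{pull-back-diff}, to two linear model maps: a coordinate projection and a coordinate hyperplane embedding.

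\textbf{Reduction.} By the constant rank theorem, every $x_0\in O$ has neighbourhoods $O_0\ni x_0$ and $U_0\ni f(x_0)$ and diffeomorphisms $\kappa_1:O_0\rightarrow\kappa_1(O_0)\subseteq\RR^m$, $\kappa_2:\kappa_2^{-1}(U_0)\rightarrow U_0$ such that $f=\kappa_2\circ\iota\circ\pi\circ\kappa_1$ on $O_0$. Here $\pi(x',x'')=x'$ is the projection $\RR^m\rightarrow\RR^k$ and $\iota(y)=(y,0)$ is the embedding $\RR^k\rightarrow\RR^n$.

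Both $\DD'^{r,p}_L$ and the conditions $L\cap\mathcal{N}_f=\emptyset$, $f^*L$ transform naturally under diffeomorphisms, so Theorem \ref{pull-back-diff} (with $E=W^{r,p}$, which satisfies $(iv)$ and $(v)$) absorbs $\kappa_1$ and $\kappa_2$. Continuity of $f^*$ is checked seminorm by seminorm: a seminorm $\mathfrak{p}_{r_1,p;\varphi,\chi}$ is split with a finite partition of unity subordinate to such charts, and the localization error is handled as in Proposition \ref{pro-for-top-isk111} via Lemma \ref{lem-for-est-oftheremininestwavefrps}. Uniqueness follows from the density of $\DD(U)$ (Proposition \ref{seq-den-comsmf}).

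It therefore suffices to prove the two model statements:
\begin{itemize}
\item[(A)] $\pi^*:\DD'^{r_2,p}_{L}\rightarrow\DD'^{r_1,p}_{\pi^*L}$ for every $r_2\geq r_1$ (this is also the submersion case, since a submersion is locally $\pi$ after a diffeomorphism);
\item[(B)] $\iota^*:\DD'^{r_2,p}_{L}\rightarrow\DD'^{r_1,p}_{\iota^*L}$ for $r_2-r_1>(n-k)/p$ and $r_2>(n-k)/p$, under $L\cap\mathcal{N}_\iota=\emptyset$.
\end{itemize}
Composing (A) and (B) gives the theorem, because $\pi$ has $\mathcal{N}_\pi$ trivial and $(\iota\circ\pi)^*L=\pi^*\iota^*L$.

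\textbf{Case (A).} Here $\pi^*u=u\otimes1$. It is enough to test with $\varphi=\varphi_1\otimes\varphi_2$: a general $\varphi$ equals $\varphi\cdot(\varphi_1\otimes\varphi_2)$ with $\varphi_1\otimes\varphi_2=1$ near $\supp\varphi$, and the extra factor is absorbed through the module property \eqref{mult-module-ine}. Then $\mathcal{F}(\varphi\pi^*u)=\mathcal{F}(\varphi_1u)\otimes\mathcal{F}\varphi_2$.

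Take $\chi\in\XX_0(\RR^m)$ whose support avoids $\pi^*L$ over $\supp\varphi$. Split the frequency space into a conic region where $|\xi''|\geq c|\xi'|$ and its complement.
\begin{itemize}
\item On the region $|\xi''|\geq c|\xi'|$, the factor $\mathcal{F}\varphi_2$ gives rapid decay, and this part is controlled by a seminorm of $\DD'$ (as in Lemma \ref{lem-for-est-oftheremininestwavefrps}).
\item On the complement, $\chi(\xi)$ can be replaced by $\chi(\xi)\widetilde\psi(\xi')$, where $\widetilde\psi\in\XX_0(\RR^k)$ is supported in a cone avoiding $L$ over $\supp\varphi_1$.
\end{itemize}
Because $\mathcal{F}L^p$ is not solid, one cannot argue by pointwise bounds. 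Instead, the estimate $\langle\xi\rangle^{r_1}\lesssim\langle\xi'\rangle^{r_2}\langle\xi''\rangle^{|r_1|}$ is realized operator-theoretically. I would write $\langle\xi\rangle^{r_1}\chi(\xi)\langle\xi'\rangle^{-r_2}\langle\xi''\rangle^{-N}$ as an $S^0_{1,0}$ Fourier multiplier (after a harmless cutoff of the singular set near $\xi'=0$), which is bounded on $L^p$. The remaining factor is a tensor product, whose $\mathcal{F}L^p$ norm factorizes as $\|\langle\xi'\rangle^{r_2}\widetilde\psi\mathcal{F}(\varphi_1u)\|_{\mathcal{F}L^p}\cdot\|\langle\xi''\rangle^{N}\mathcal{F}\varphi_2\|_{\mathcal{F}L^p}$.

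\textbf{Case (B).} This is the main obstacle. Let $\varphi\in\DD(\RR^k)$ and $\psi\in\XX_0(\RR^k)$ satisfy $(\supp\varphi\times\supp\psi)\cap\iota^*L=\emptyset$, and pick $\widetilde\varphi$ with $\widetilde\varphi(y,0)=\varphi(y)$.

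The key geometric point is the choice of a closed cone. Let $V=\{0\}\cup\RR_+\supp\psi$ and consider
\[
\Gamma:=\{(\xi',\eta'')\,|\,\xi'\in V\}\cup(\{0\}\times\RR^{n-k}).
\]
Since $L\cap\mathcal{N}_\iota=\emptyset$ excludes the directions $(0,\eta'')$, and $V$ avoids $\iota^*L$, a compactness argument (shrinking $\supp\widetilde\varphi$) shows that a closed conic neighbourhood $\widetilde\Gamma$ of $\Gamma$ satisfies $(\supp\widetilde\varphi\times\widetilde\Gamma)\cap L=\emptyset$. Pick $\Psi\in\XX(\RR^n)$ with $\Psi=1$ on $\Gamma\setminus B(0,1)$ and support in $\widetilde\Gamma$.

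Set $w:=\Psi(D)(\widetilde\varphi u)$; then $w\in W^{r_2,p}(\RR^n)$ with norm bounded by $\mathfrak{p}_{r_2,p;\widetilde\varphi,\Psi}(u)$. For smooth $u$,
\[
\mathcal{F}(\varphi\iota^*u)(\xi')=(2\pi)^{-(n-k)}\int\mathcal{F}(\widetilde\varphi u)(\xi',\eta'')d\eta''.
\]
Split the integrand with $\Psi+(1-\Psi)$. After multiplying by $\psi(\xi')$, the $(1-\Psi)$-part is supported in a bounded set of $\xi'$, since $\Gamma$ contains every $(\xi',\eta'')$ with $\xi'\in V$, and it is controlled by $\DD'$-seminorms. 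The $\Psi$-part is $\mathcal{F}$ of the trace $w(\cdot,0)$.

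The classical $L^p$ trace theorem gives $W^{r_2,p}(\RR^n)\rightarrow B^{r_2-(n-k)/p}_{p,p}(\RR^k)\hookrightarrow W^{r_1,p}(\RR^k)$, which is valid exactly when $r_2>(n-k)/p$ and $r_1<r_2-(n-k)/p$. Multiplying by $\psi(D)$, which is bounded on $W^{r_1,p}$ by $(iv)$, yields the seminorm bound. The estimate then extends from $\mathcal{C}^\infty(U)$ to all of $\DD'^{r_2,p}_L(U)$ by Proposition \ref{seq-den-comsmf} and completeness.

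The hard part is making the remainder estimate uniform and continuous on $\DD'$, near $\xi'$ small with $\eta''$ large where $\Psi$ is not identically $1$, and making sure no pointwise (solidity) arguments in $\mathcal{F}L^p$ sneak in. The trace step is the only place where the loss $(n-k)/p$ enters.
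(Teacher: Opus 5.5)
Your reduction to the projection and flat-embedding models, and the final composition $\pi^*\iota^*$, match the paper. Both model cases, however, are proved by a genuinely different method.

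\textbf{Projection case.} The paper does not pass to product cutoffs. It argues by duality against $\theta$ with $\|\mathcal{F}\theta\|_{L^q}\le1$, integrates out $x''$ through the operators $\mathcal{J}_j$, applies H\"older with the weight $\langle x''\rangle^{m-n}$, and concludes with $S^0_{1,0}$-calculus. You instead localize to $\varphi_1\otimes\varphi_2$ and use that the $\mathcal{F}L^p$-norm of a tensor product factorizes, which is shorter. The localization is legitimate after a partition of unity, because $\pi^*L$ does not depend on $x''$, and the error is handled as in Proposition \ref{pro-for-top-isk111}.

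One small correction there. For $N>0$ the multiplier $\langle\xi\rangle^{r_1}\chi(\xi)\langle\xi'\rangle^{-r_2}\langle\xi''\rangle^{-N}$ is not in $S^0_{1,0}$, since its $\xi''$-derivatives gain only $\langle\xi''\rangle^{-1}$, not $\langle\xi\rangle^{-1}$. Either take $N=0$, which suffices on $\{|\xi''|<c|\xi'|\}$ where $\langle\xi'\rangle\asymp\langle\xi\rangle$, or note that $\langle D_{x''}\rangle^{-N}$ is separately bounded on $L^p(\RR^m)$.

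\textbf{Embedding case.} The paper proves by hand the trace-type estimate it needs. By duality, restriction becomes tensoring with $\delta$ in the normal variables. The paper then shows $\langle D\rangle^{-\lambda}\delta_d\in L^q(\RR^d)$ for $\lambda>d/p$ and applies Lizorkin--Marcinkiewicz to three frequency pieces $I'_{1,j},I'_{2,j},I'_{3,j}$, the last handled by symbolic calculus. Your single cone $\Gamma=V\times\RR^{n-k}$ is a real simplification. After multiplication by $\psi(\xi')$, the $(1-\Psi)$-part lives in the unit ball, so the difficulty you anticipate at the end does not arise for these seminorms. Quoting $W^{r_2,p}(\RR^n)\to B^{r_2-(n-k)/p}_{p,p}(\RR^k)$ makes the loss transparent. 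For $1<p\le2$, where $B^s_{p,p}\hookrightarrow W^{s,p}$, it even gives the endpoint $r_2-r_1=(n-k)/p$. The price is importing Besov/Triebel--Lizorkin trace theory, whereas the paper stays self-contained.

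\textbf{Missing step.} The topology of $\DD'^{r_1,p}_{\iota^*L}(O)$ also contains the continuous seminorms of $\DD'(O)$. Unlike $\pi^*$, restriction is not continuous on $\DD'$. So you must still bound $\sup_{\chi\in B}|\langle\iota^*u,\chi\rangle|$ for $B$ bounded in $\DD_K$. When $\iota^*L$ is the whole cotangent bundle, this is the entire content of the statement, and your density/completeness extension relies on it. The paper does this at the end of the case $k=m$.

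In your framework it is a short addition. Take $\varphi=1$ near $K$ and cut $\widetilde\varphi u$ with $\Psi_0\in\XX(\RR^n)$ equal to $1$ on $\{|\eta''|\ge2C|\eta'|\}\backslash B(0,1)$. Choose its support in a cone $\{|\eta''|>C|\eta'|\}$ avoiding $L$ over $\supp\widetilde\varphi$, which is possible since $L\cap\mathcal{N}_\iota=\emptyset$.
\begin{itemize}
\item The trace of $\Psi_0(D)(\widetilde\varphi u)$ is bounded in $L^p(\RR^k)$ by $\mathfrak{p}_{r_2,p;\widetilde\varphi,\Psi_0}(u)$, using only $r_2>(n-k)/p$.
\item For the other part, the Fourier transform of the trace is an $\eta''$-integral over $\{|\eta''|\le 2C|\xi'|\}\cup B(0,1)$. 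It therefore has polynomial growth controlled by a $\DD'$-seminorm of $u$, which is harmless against $\mathcal{F}\chi$.
\end{itemize}
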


\begin{remark}
When $L=\emptyset$, the theorem states that $f^*:W^{r_2,p}_{\operatorname{loc}}(U)\rightarrow W^{r_1,p}_{\operatorname{loc}}(O)$ is well-defined and continuous whenever $r_1$ and $r_2$ satisfy the above conditions. Hence, if $m<n$, $O=U\cap \RR^m$ and $f$ is the canonical imbedding, the theorem can be viewed as a local variant of the Sobolev imbedding theorem for restricting to lower dimensional hyperplanes \cite[Theorem 4.12, p. 85]{adams}.
\end{remark}

\begin{proof}
Throughout the proof, $q$ will stand for the H\"older conjugate index to $p$. For $x\in\RR^m$, we denote $x=(x',x'')$ with $x'\in\RR^k$ and $x''\in\RR^{m-k}$. Furthermore, we denote by $0_l$ the zero in $\RR^l$, $l\in\ZZ_+$. Employing Theorem \ref{pull-back-diff} together with the constant rank theorem \cite[Theorem 4.12, p. 81]{lee} and arguing as in the proof \cite[Theorem 3.21, CASE 3]{PP}, one shows that it suffices to prove the following special case of the theorem.
\begin{itemize}
\item[$(*)$]Let $\widetilde{O}_0$ and $\widetilde{U}_0$ be open neighbourhoods of the origins in $\RR^m$ and $\RR^n$ respectively and let $\hat{f}_0:\widetilde{O}_0\rightarrow \widetilde{U}_0$, $\hat{f}_0(x)=(x',0_{n-k})$, $x=(x',x'')\in \widetilde{O}_0$. If $\widetilde{L}$ is a closed conic subset of $\widetilde{U}_0\times(\RR^n\backslash\{0\})$ which satisfies $\widetilde{L}\cap \mathcal{N}_{\hat{f}_0}=\emptyset$, then the map $\hat{f}_0^*:\mathcal{C}^{\infty}(\widetilde{U}_0)\rightarrow\mathcal{C}^{\infty}(\widetilde{O}_0)$ is continuous when $\mathcal{C}^{\infty}(\widetilde{U}_0)$ and $\mathcal{C}^{\infty}(\widetilde{O}_0)$ are equipped with the topologies induced by $\DD'^{r_2,p}_{\widetilde{L}}(\widetilde{U}_0)$ and $\DD'^{r_1,p}_{\hat{f}^*_0\widetilde{L}}(\widetilde{O}_0)$ respectively.
\end{itemize}
We prove $(*)$ by considering three cases.\\
\indent \underline{The case when $k=n$.} We can assume that $m>n$ since the claim is trivial if $m=n$. Notice that $\mathcal{N}_{\hat{f}_0}=\hat{f}_0(\widetilde{O}_0)\times \{0_n\}$ and
\begin{equation*}
\hat{f}_0^*\widetilde{L}=\{((x',x''),(\eta,0_{m-n}))\in \widetilde{O}_0\times(\RR^m\backslash \{0\})\,|\, (x',\eta)\in\widetilde{L}\}.\label{pul-bac-conic}
\end{equation*}
Let $\chi\in\XX_0(\RR^m)$ and $\varphi\in\DD(\widetilde{O}_0)\backslash\{0\}$ are such that $(\supp\varphi\times\supp\chi)\cap \hat{f}_0^*\widetilde{L}=\emptyset$; since our goal is to estimate $\mathfrak{p}_{r_1,p;\varphi,\chi}(\hat{f}^*_0u)$, we can assume that $\supp\chi\neq\emptyset$. Of course, $G:=\{0\}\cup\RR_+\supp\chi$ is a closed cone in $\RR^m$ which satisfies $(\supp\varphi\times G)\cap \hat{f}_0^*\widetilde{L}=\emptyset$. As in the proof \cite[Theorem 3.21, CASE 3]{PP}, we can find an open set $\widetilde{O}_1\subseteq \widetilde{O}_0$ containing $\supp\varphi$ and having a compact closure in $\widetilde{O}_0$ and open cones $G'_j$, $\widetilde{G}'_j$, $j=1,\ldots,s$, in $\RR^n$ and $0<\varepsilon<1/2$ which satisfy the following:
\begin{itemize}
\item[$(i)$] $G\backslash\{0_m\}\subseteq \widetilde{G}_0\cup\bigcup_{j=1}^s(G'_j\times\RR^{m-n})$ with $\widetilde{G}_0:=\{(\xi',\xi'')\in\RR^m\,|\, |\xi''|> \varepsilon|\xi'|\}$;
\item[$(ii)$] either all $G'_j$ and $\widetilde{G}'_j$, $j=1,\ldots,s$, are nonempty and they satisfy
\begin{equation*}
\overline{G'_j}\subseteq \widetilde{G}'_j\cup\{0_n\}\quad \mbox{and}\quad (\hat{f}_0(\overline{\widetilde{O}_1})\times \overline{\widetilde{G}'_j})\cap \widetilde{L}=\emptyset,\qquad j=1,\ldots s,
\end{equation*}
or $G'_j=\widetilde{G}'_j=\emptyset$ for all $j\in\{1,\ldots,s\}$.
\end{itemize}
In view of $(ii)$, a standard compactness argument yields that there is an open set $U_1\subseteq \widetilde{U}_0$ such that
\begin{itemize}
\item[$(iii)$] $\hat{f}_0(\overline{\widetilde{O}_1})\subseteq U_1$ and $(U_1\times \overline{\widetilde{G}'_j})\cap \widetilde{L}=\emptyset$, $j=1,\ldots,s$.
\end{itemize}
For each $j\in\{1,\ldots,s\}$, pick $\chi_j\in\XX_0(\RR^n)$ such that $0\leq \chi_j\leq1$, $\supp\chi_j\subseteq \widetilde{G}'_j\backslash B(0_n,1/3)$ and $\chi_j=1$ on $\overline{G'_j}\backslash B(0_n,1/2)$; when $G'_j$ and $\widetilde{G}'_j$ are empty, we take $\chi_j=0$. Furthermore, choose $\chi_0\in\XX_0(\RR^m)$ such that $0\leq \chi_0\leq 1$, $\supp\chi_0\subseteq \widetilde{G}'_0:=\{\xi\in\RR^m\,|\, |\xi''|>(\varepsilon/2)|\xi'|\}$ and $\chi_0=1$ on $\widetilde{G}_0\backslash B(0_m,1/2)$. Pick $\chi'\in\XX_0(\RR^m)$ such that $0\leq \chi'\leq 1$, $\chi'=0$ on $B(0_m,3/4)$ and $\chi'(x)=1$ when $|x|\geq 1$. Set
\begin{equation*}
\widetilde{\chi}_j:=\frac{(1-\chi_0)\chi'\chi(\chi_j\otimes\mathbf{1}_{\RR^{m-n}})}{\sum_{l=1}^s\chi_l\otimes \mathbf{1}_{\RR^{m-n}}},\quad j=1,\ldots,s;
\end{equation*}
when all $G'_j$ and $\widetilde{G}'_j$ are empty, we set $\widetilde{\chi}_j:=0$. In addition, denote $\widetilde{\chi}_0:=\chi_0\chi'\chi$. Then, $\widetilde{\chi}_j\in\XX_0(\RR^m)$,\footnote{This fact is not trivial for $\widetilde{\chi}_j$, $j=1,\ldots,s$, and it holds because of the properties of $\chi_0$; notice that if $\psi'\in\XX_0(\RR^n)$ then $\psi'\otimes\mathbf{1}_{\RR^{m-n}}$ may not be in $\XX(\RR^m)$!} $j=0,\ldots,s$, and $\chi'\chi=\sum_{j=0}^s\widetilde{\chi}_j$. Choose $\phi\in\DD(U_1)$ such that $\phi=1$ on a neighbourhood of $\hat{f}_0(\supp\varphi)$. Let $u\in\mathcal{C}^{\infty}(\widetilde{U}_0)$. We infer
\begin{equation}\label{est-for-uin-seminormsofcos}
\mathfrak{p}_{r_1,p;\varphi,\chi}(\hat{f}^*_0u)=\sup_{\theta\in\SSS(\RR^m),\,\ \|\theta\|_{\mathcal{F}^{-1}L^q(\RR^m)}\leq 1}|\langle \theta,\langle\cdot\rangle^{r_1}\chi\mathcal{F}(\varphi \hat{f}_0^*u)\rangle|.
\end{equation}
For $\theta\in\SSS(\RR^m)$ satisfying $\|\theta\|_{\mathcal{F}^{-1}L^q(\RR^m)}\leq 1$, we have
\begin{equation}\label{est-for-mai-reskls}
|\langle \theta,\langle\cdot\rangle^{r_1}\chi\mathcal{F}(\varphi \hat{f}_0^*u)\rangle|\leq |\langle \theta,\langle\cdot\rangle^{r_1}(1-\chi')\chi\mathcal{F}(\varphi \hat{f}_0^*u)\rangle|+\sum_{j=0}^s|\langle \theta,\langle\cdot\rangle^{r_1}\widetilde{\chi}_j\mathcal{F}(\varphi \hat{f}_0^*u)\rangle|.
\end{equation}
We first estimate the terms in the sum. For each $j\in\{0,\ldots,s\}$, it will be convenient to introduce the operator
\begin{equation*}
\mathcal{J}_j:\SSS(\RR^m)\rightarrow\SSS(\RR^n),\quad \mathcal{J}_j\vartheta(x'):=\int_{\RR^{m-n}}\varphi(x)\check{\widetilde{\chi}}_j(D)\langle D\rangle^{r_1}\mathcal{F}\vartheta(x)dx'';
\end{equation*}
clearly, $\mathcal{J}_j$ is well-defined and continuous. Since
\begin{equation*}
\mathcal{F}(\varphi \hat{f}_0^*u)(\xi)=\mathcal{F}(\varphi \hat{f}_0^*(\phi u))(\xi)=\frac{1}{(2\pi)^n}\int_{\RR^n}\mathcal{F}(\phi u)(\eta)\int_{\widetilde{O}_0}e^{ix'\eta-ix\xi}\varphi(x)dxd\eta,
\end{equation*}
we infer (we also employ the identity $\mathcal{F}(a\theta)=\check{a}(D)\mathcal{F}\theta$)
\begin{align}
\langle \theta,\langle\cdot\rangle^{r_1}\widetilde{\chi}_j\mathcal{F}(\varphi \hat{f}_0^*u)\rangle&=\frac{1}{(2\pi)^n}\iiint_{\RR^m\times\RR^n\times\RR^m} \theta(\xi)\mathcal{F}(\phi u)(\eta)e^{ix'\eta-ix\xi}\langle \xi\rangle^{r_1}\widetilde{\chi}_j(\xi)\varphi(x)d\xi d\eta dx\nonumber\\
&=\frac{1}{(2\pi)^n}\iint_{\RR^n\times\RR^m}\mathcal{F}(\phi u)(\eta)e^{ix'\eta}\varphi(x)\check{\widetilde{\chi}}_j(D)\langle D\rangle^{r_1}\mathcal{F}\theta(x)d\eta dx\label{exp-for-actonsplintskl}\\
&=\int_{\RR^n}\mathcal{F}(\phi u)(\eta)\mathcal{F}^{-1}(\mathcal{J}_j\theta)(\eta)d\eta.\label{exp-for-pai-ofdualityformainskl}
\end{align}
We first address the case when $j\in\{1,\ldots,s\}$; we only need to consider the case when $G'_j$ and $\widetilde{G}'_j$ are nonempty for otherwise $\mathcal{J}_j\theta=0$. Fix such $j$ and choose $\psi_j\in\XX_0(\RR^n)$ such that $0\leq \psi_j\leq 1$, $\supp\psi_j\subseteq \widetilde{G}'_j\backslash B(0_n,1/4)$ and $\psi_j=1$ on a neighbourhood of $\supp\chi_j$. Write $|\langle \theta,\langle\cdot\rangle^{r_1}\widetilde{\chi}_j\mathcal{F}(\varphi \hat{f}_0^*u)\rangle|\leq I_{1,j}+I_{2,j}$ with
\begin{align*}
I_{1,j}&:=\left|\int_{\RR^n}\psi_j(\eta)\mathcal{F}(\phi u)(\eta)\mathcal{F}^{-1}(\mathcal{J}_j\theta)(\eta)d\eta \right|,\\
I_{2,j}&:=\left|\int_{\RR^n}(1-\psi_j(\eta))\mathcal{F}(\phi u)(\eta)\mathcal{F}^{-1}(\mathcal{J}_j\theta)(\eta)d\eta\right|.
\end{align*}
We estimate $I_{1,j}$ as follows
\begin{align*}
I_{1,j}&\leq \|\langle\cdot\rangle^{r_2}\psi_j\mathcal{F}(\phi u)\|_{\mathcal{F}L^p(\RR^n)} \|\langle\cdot\rangle^{-r_2}\mathcal{F}^{-1}(\mathcal{J}_j\theta)\|_{\mathcal{F}^{-1}L^q(\RR^n)}\\
&=\mathfrak{p}_{r_2,p;\phi,\psi_j}(u)\|\langle D\rangle^{-r_2}\mathcal{J}_j\theta\|_{L^q(\RR^n)};
\end{align*}
notice that $\mathfrak{p}_{r_2,p;\phi,\psi_j}$ is a well-defined continuous seminorm on $\DD'^{r_2,p}_{\widetilde{L}}(\widetilde{U}_0)$ in view of $(iii)$. Employing the H\"older inequality, we infer
\begin{align}
\|&\langle D\rangle^{-r_2}\mathcal{J}_j\theta\|_{L^q(\RR^n)}\nonumber\\
&= \left(\int_{\RR^n}\left|\int_{\RR^{m-n}}\langle D_{x'}\rangle^{-r_2}\varphi(x)\check{\widetilde{\chi}}_j(D_x)\langle D_x\rangle^{r_1}\mathcal{F}\theta(x)dx''\right|^qdx'\right)^{1/q}\nonumber\\
&\leq\|\langle \cdot\rangle^{-m+n}\|_{L^p(\RR^{m-n})}\left(\int_{\RR^m}\left|\langle D_{x'}\rangle^{-r_2}\langle x''\rangle^{m-n} \varphi(x)\check{\widetilde{\chi}}_j(D_x)\langle D_x\rangle^{r_1}\mathcal{F}\theta(x)\right|^qdx\right)^{1/q}.\label{est-for-operatoronthetwithders}
\end{align}
We claim that the operator that acts on $\mathcal{F}\theta$ is continuous on $L^q(\RR^m)$. To see this, pick $\widetilde{\chi}\in\XX_0(\RR^m)$ such that $0\leq\widetilde{\chi}\leq 1$, $\supp\widetilde{\chi}\subseteq \{\xi\in\RR^m\,|\, |\xi'|>|\xi''|/(4\varepsilon)\}\backslash B(0_m,1/4)$ and $\widetilde{\chi}=1$ on $\{\xi\in\RR^m\,|\, |\xi'|>|\xi''|/(2\varepsilon)\}\backslash B(0_m,1/2)$ and write it as
\begin{align*}
\langle& D_{x'}\rangle^{-r_2}\langle x''\rangle^{m-n}\varphi(x)\check{\widetilde{\chi}}_j(D_x)\langle D_x\rangle^{r_1}\\
&=\left(\langle D_{x'}\rangle^{-r_2}\widetilde{\chi}(D_x)\langle D_x\rangle^{r_1}\right)\left(\langle D_x\rangle^{-r_1}\langle x''\rangle^{m-n}\varphi(x)\check{\widetilde{\chi}}_j(D_x)\langle D_x\rangle^{r_1}\right)\\
&{}\quad+\langle D_{x'}\rangle^{-r_2}(\operatorname{Id}-\widetilde{\chi}(D_x))\langle x''\rangle^{m-n}\varphi(x)\check{\widetilde{\chi}}_j(D_x)\langle D_x\rangle^{r_1}.
\end{align*}
The operator $\langle D_x\rangle^{-r_1}\langle x''\rangle^{m-n}\varphi(x)\check{\widetilde{\chi}}_j(D_x)\langle D_x\rangle^{r_1}$ is a $\Psi$DO with symbol in $S^0_{1,0}(\RR^{2m})$ and hence continuous on $L^q(\RR^m)$. The Fourier multiplier $\langle D_{x'}\rangle^{-r_2}\widetilde{\chi}(D_x)\langle D_x\rangle^{r_1}$ has symbol $\langle \xi'\rangle^{-r_2}\widetilde{\chi}(\xi)\langle\xi\rangle^{r_1}$, which, when viewed as a $\Psi$DO, also has a symbol in $S^0_{1,0}(\RR^{2m})$ since $r_2\geq r_1$ and $|\xi'|>|\xi''|/(4\varepsilon)$ on the support of $\widetilde{\chi}$. The second operator is a $\Psi$DO with symbol in $S^{-\infty}(\RR^{2m})$. This follows from the fact that $(\operatorname{Id}-\widetilde{\chi}(D_x))\langle x''\rangle^{m-n}\varphi(x)\check{\widetilde{\chi}}_j(D_x)\langle D_x\rangle^{r_1}$ is a $\Psi$DO with symbol in $S^{-\infty}(\RR^{2m})$ which, in turn, follows by applying the asymptotic expansion for the composition and the fact that for any $N\in\ZZ_+$
\begin{equation*}
\sum_{|\alpha|=0}^{N-1}\alpha!^{-1}\partial^{\alpha}_{\xi}(1-\widetilde{\chi}(\xi)) D^{\alpha}_x\left(\langle x''\rangle^{m-n}\varphi(x)\check{\widetilde{\chi}}_j(\xi)\langle \xi\rangle^{r_1}\right)=0,\quad (x,\xi)\in\RR^{2m},
\end{equation*}
in view of the properties of $\widetilde{\chi}$ and $\widetilde{\chi}_j$. Consequently, we deduce that
\begin{equation*}
I_{1,j}\leq C'_1\mathfrak{p}_{r_2,p;\phi,\psi_j}(u)\|\mathcal{F}\theta\|_{L^q(\RR^m)}\leq C'_1\mathfrak{p}_{r_2,p;\phi,\psi_j}(u).
\end{equation*}
We claim that $I_{2,j}$ is bounded by a continuous seminorm on $\DD'(\widetilde{U}_0)$ of $u$. To prove this, it suffices to show that
\begin{equation*}
\DD'(\widetilde{U}_0)\rightarrow[0,\infty),\quad v\mapsto \sup_{\theta\in\SSS(\RR^m),\, \|\theta\|_{\mathcal{F}^{-1}L^q(\RR^m)}\leq 1} |\langle v, \phi(\operatorname{Id}-\check{\psi}_j(D))\mathcal{J}_j\theta\rangle|,
\end{equation*}
is a well-defined continuous seminorm on $\DD'(\widetilde{U}_0)$. To prove the latter, it is enough to show that $\{(\operatorname{Id}-\check{\psi}_j(D))\mathcal{J}_j\theta\,|\, \theta\in\SSS(\RR^m),\, \|\mathcal{F}\theta\|_{L^q(\RR^m)}\leq 1\}$ is a bounded subset of $\mathcal{C}^{\infty}(\RR^n)$. For $\alpha\in\NN^n$, similarly as in \eqref{est-for-operatoronthetwithders}, we have
\begin{multline*}
\|\partial^{\alpha}(\operatorname{Id}-\check{\psi}_j(D))\mathcal{J}_j\theta\|_{L^q(\RR^n)}\leq\|\langle \cdot\rangle^{-m+n}\|_{L^p(\RR^{m-n})}\\
\cdot\left(\int_{\RR^m}\left|\partial^{\alpha}_{x'}\left((\operatorname{Id}-\check{\psi}_j(D_{x'}))\langle x''\rangle^{m-n}\varphi(x)\check{\widetilde{\chi}}_j(D_x)\langle D_x\rangle^{r_1}\mathcal{F}\theta(x)\right)\right|^qdx\right)^{1/q}.
\end{multline*}
We claim that the operator that acts on $\mathcal{F}\theta$ has symbol in $S^{-\infty}(\RR^{2m})$ and thus it is continuous on $L^q(\RR^m)$. To see this, let $\widetilde{\chi}$ be as above and write
\begin{align*}
(\operatorname{Id}-\check{\psi}_j(D_{x'}))&\langle x''\rangle^{m-n}\varphi(x)\check{\widetilde{\chi}}_j(D_x)\langle D_x\rangle^{r_1}\\
&=(\operatorname{Id}-\check{\psi}_j(D_{x'}))\widetilde{\chi}(D_x)\langle x''\rangle^{m-n}\varphi(x)\check{\widetilde{\chi}}_j(D_x)\langle D_x\rangle^{r_1}\\
&{}\quad +(\operatorname{Id}-\check{\psi}_j(D_{x'}))(\operatorname{Id}-\widetilde{\chi}(D_x))\langle x''\rangle^{m-n}\varphi(x)\check{\widetilde{\chi}}_j(D_x)\langle D_x\rangle^{r_1}.
\end{align*}
Arguing as above, one can show that the second operator has symbol in $S^{-\infty}(\RR^{2m})$. To show this for the first, notice that the symbol of $(\operatorname{Id}-\check{\psi}_j(D_{x'}))\widetilde{\chi}(D_x)$ is in $S^0_{1,0}(\RR^{2m})$ in view of the support of $\widetilde{\chi}$. The asymptotic expansion of the composition and the fact that for any $N\in\ZZ_+$
\begin{equation*}
\sum_{|\beta|=0}^{N-1}\beta!^{-1}\partial^{\beta}_{\xi}\left((1-\check{\psi}_j(\xi'))\widetilde{\chi}(\xi)\right) D^{\beta}_x\left(\langle x''\rangle^{m-n}\varphi(x)\check{\widetilde{\chi}}_j(\xi)\langle \xi\rangle^{r_1}\right)=0,\quad (x,\xi)\in\RR^{2m},
\end{equation*}
which holds in view of the properties of $\psi_j$ and $\widetilde{\chi}_j$, imply that the symbol of the first operator is also in $S^{-\infty}(\RR^{2m})$. We deduce that $\{\partial^{\alpha}(\operatorname{Id}-\check{\psi}_j(D))\mathcal{J}_j\theta\,|\, \theta\in\SSS(\RR^m),\, \|\mathcal{F}\theta\|_{L^q(\RR^m)}\leq1\}$ is bounded in $L^q(\RR^n)$ for all $\alpha\in\NN^n$ and thus $\{(\operatorname{Id}-\check{\psi}_j(D))\mathcal{J}_j\theta\,|\, \theta\in\SSS(\RR^m),\, \|\mathcal{F}\theta\|_{L^q(\RR^m)}\leq1\}$ is a bounded subsets of $\DD_{L^q}(\RR^n)$. Since $\DD_{L^q}(\RR^n)\subseteq\DD_{L^{\infty}}(\RR^n)$ continuously, the set is bounded in $\DD_{L^{\infty}}(\RR^n)$ and consequently in $\mathcal{C}^{\infty}(\RR^n)$ as well. This completes the proof for the terms when $j\in\{1,\ldots,s\}$.\\
\indent We claim that the term when $j=0$ in the sum in \eqref{est-for-mai-reskls} is bounded by a continuous seminorm on $\DD'(\widetilde{U}_0)$ of $u$. In view of \eqref{exp-for-pai-ofdualityformainskl}, this immediately follows once we show that
\begin{equation*}
\DD'(\widetilde{U}_0)\rightarrow[0,\infty),\quad v\mapsto \sup_{\theta\in\SSS(\RR^m),\, \|\theta\|_{\mathcal{F}^{-1}L^q(\RR^m)}\leq 1} |\langle v, \phi\mathcal{J}_0\theta\rangle|,
\end{equation*}
is a well-defined continuous seminorm on $\DD'(\widetilde{U}_0)$. To prove the latter, it suffices to show that $\{\mathcal{J}_0\theta\,|\, \theta\in\SSS(\RR^m),\, \|\mathcal{F}\theta\|_{L^q(\RR^m)}\leq 1\}$ is a bounded subset of $\mathcal{C}^{\infty}(\RR^n)$. Fix $\alpha\in\NN^n$ and write
\begin{equation*}
\partial^{\alpha}\mathcal{J}_0\theta(x')=\int_{\RR^{m-n}}\partial^{\alpha}_{x'}\left(\left(\langle D_{x''}\rangle^{|\alpha|+r_1}\varphi(x)\right)\langle D_{x''}\rangle^{-|\alpha|-r_1} \langle D_x\rangle^{r_1}\check{\widetilde{\chi}}_0(D_x)\mathcal{F}\theta(x)\right)dx''.
\end{equation*}
The H\"older inequality gives
\begin{multline*}
\|\partial^{\alpha}\mathcal{J}_0\theta\|_{L^q(\RR^n)}\leq \|\langle\cdot\rangle^{-m+n}\|_{L^p(\RR^{m-n})}\\
\cdot\left(\int_{\RR^m}\left|\partial^{\alpha}_{x'}\left(\langle x''\rangle^{m-n}\left(\langle D_{x''}\rangle^{|\alpha|+r_1}\varphi\right)(x) \langle D_{x''}\rangle^{-|\alpha|-r_1} \langle D_x\rangle^{r_1}\check{\widetilde{\chi}}_0(D_x) \mathcal{F}\theta(x)\right)\right|^qdx\right)^{1/q}.
\end{multline*}
Notice that the $\Psi$DO $\langle x''\rangle^{m-n}\left(\langle D_{x''}\rangle^{|\alpha|+r_1}\varphi\right)(x)\langle D_{x''}\rangle^{-|\alpha|-r_1} \langle D_x\rangle^{r_1}\check{\widetilde{\chi}}_0(D_x)$ has a symbol in $S^{-|\alpha|}_{1,0}(\RR^{2m})$ in view of the support of $\widetilde{\chi}_0$. Consequently, its composition with $\partial^{\alpha}_{x'}$ is a $\Psi$DO with symbol in $S^0_{1,0}(\RR^{2m})$ and hence it is continuous on $L^q(\RR^m)$. We deduce that $\{\partial^{\alpha}\mathcal{J}_0\theta\,|\, \theta\in\SSS(\RR^m),\, \|\mathcal{F}\theta\|_{L^q(\RR^m)}\leq 1\}$ is bounded in $L^q(\RR^n)$ for all $\alpha\in\NN^n$. Arguing as above, this implies that $\{\mathcal{J}_0\theta\,|\, \theta\in\SSS(\RR^m),\, \|\mathcal{F}\theta\|_{L^q(\RR^m)}\leq 1\}$ is bounded in $\mathcal{C}^{\infty}(\RR^n)$ which completes the proof for the term when $j=0$. The first term on the right hand side in \eqref{est-for-mai-reskls} is also bounded by a continuous seminorm on $\DD'(\widetilde{U}_0)$ of $u$. This can be shown similarly as for the term with $j=0$ by rewriting it similarly as in \eqref{exp-for-pai-ofdualityformainskl} and employing the fact that $(1-\chi')\chi$ has compact support. We showed that $\mathfrak{p}_{r_1,p;\varphi,\chi}(\hat{f}^*_0u)$ is bounded by a continuous seminorm on $\DD'^{r_2,p}_{\widetilde{L}}(\widetilde{U}_0)$ of $u$.\\
\indent It remains to bound $\mathfrak{p}(\hat{f}^*_0u)$ with $\mathfrak{p}$ a continuous seminorm on $\DD'(\widetilde{O}_0)$. Without loss in generality, we can assume that $\mathfrak{p}(\hat{f}^*_0u)=\sup_{\chi\in B}|\langle \hat{f}^*_0u,\chi\rangle|$ where $B$ is a bounded subset of $\DD(\widetilde{O}_0)$. There is a compact set $K\subseteq\widetilde{O}_0$ such that $B$ is a bounded subset of $\DD_K(\RR^m)$. Pick $\phi\in\DD(\widetilde{U}_0)$ such that $\phi=1$ on a neighbourhood of $\hat{f}_0(K)$. Similarly as above, denote $\mathcal{J}:\SSS(\RR^m)\rightarrow\SSS(\RR^n)$, $\mathcal{J}\vartheta(x'):=\int_{\RR^{m-n}}\vartheta(x)dx''$; clearly $\mathcal{J}$ is well-defined and continuous. Notice that
\begin{equation*}
\sup_{\chi\in B}|\langle\hat{f}^*_0u,\chi\rangle|=\sup_{\chi\in B}|\langle\hat{f}^*_0(\phi u),\chi\rangle|=\sup_{\chi\in B}\left|\int_{\RR^m}\phi(x') u(x') \chi(x) dx\right|=\sup_{\chi\in B}|\langle u,\phi\mathcal{J}\chi\rangle|.
\end{equation*}
Since $\{\phi\mathcal{J}\chi\,|\, \chi \in B\}$ is a bounded subset of $\DD(\widetilde{U}_0)$, the right hand side is a continuous seminorm on $\DD'(\widetilde{U}_0)$ of $u$ and the proof is complete.\\
\indent \underline{The case when $k=m$.} We can assume $m<n$ since the case $m=n$ is trivial. For $y\in\RR^n$, we write $y=(y',y'')$ with $y'\in\RR^m$ and $y''\in\RR^{n-m}$. We have
\begin{align}
\mathcal{N}_{\hat{f}_0}&=\{((x,0_{n-m}),(0_m,\eta''))\in \widetilde{U}_0\times\RR^n\,|\, x\in\widetilde{O}_0\},\label{nor-for-map-injec111111}\\
\hat{f}_0^*\widetilde{L}&=\{(x,\eta')\in \widetilde{O}_0\times(\RR^m\backslash \{0\})\,|\, \exists\eta''\in\RR^{n-m},\, ((x,0_{n-m}),(\eta',\eta''))\in\widetilde{L}\}.\nonumber
\end{align}
Let $\chi\in\XX_0(\RR^m)$ and $\varphi\in\DD(\widetilde{O}_0)\backslash\{0\}$ be such that $(\supp\varphi\times\supp\chi)\cap \hat{f}_0^*\widetilde{L}=\emptyset$; since our goal is to estimate $\mathfrak{p}_{r_1,p;\varphi,\chi}(\hat{f}^*_0u)$, we can assume that $\supp\chi\neq\emptyset$. Then $G:=\{0\}\cup\RR_+\supp\chi$ is a closed cone in $\RR^m$ which satisfies $(\supp\varphi\times G)\cap \hat{f}_0^*\widetilde{L}=\emptyset$. As in the proof of \cite[Theorem 3.21, CASE 3]{PP}, we can find an open set $\widetilde{O}_1\subseteq \widetilde{O}_0$ containing $\supp\varphi$ and having a compact closure in $\widetilde{O}_0$, an open set $\widetilde{U}_1\subseteq \widetilde{U}_0$ containing $\hat{f}_0(\supp\varphi)$, nonempty open cones $G'_j$, $\widetilde{G}'_j$, $j=1,\ldots,s$, in $\RR^m$ and $C_0>1$ which satisfy the following:
\begin{itemize}
\item[$(i)'$] $G\backslash\{0_m\}\subseteq \bigcup_{j=1}^sG'_j$;
\item[$(ii)'$] $\overline{G'_j}\subseteq \widetilde{G}'_j\cup\{0_m\}$ and $(\hat{f}_0(\overline{\widetilde{O}_1})\times \overline{\widetilde{G}'_j}\times\RR^{n-m})\cap \widetilde{L}=\emptyset$, $j=1,\ldots s$;
\item[$(iii)'$] $(\widetilde{U}_1\times \overline{\widetilde{V}_0})\cap \widetilde{L}=\emptyset$ with $\widetilde{V}_0:=\{(\eta',\eta'')\in\RR^n\,|\, |\eta''|>C_0|\eta'|\}$.
\end{itemize}
The second part of $(ii)'$ together with a standard compactness argument implies that there is an open set $U_1\subseteq \widetilde{U}_0$ such that
\begin{itemize}
\item[$(iv)'$] $\hat{f}_0(\overline{\widetilde{O}_1})\subseteq U_1$ and $(U_1\times \overline{\widetilde{G}'_j}\times \RR^{n-m})\cap \widetilde{L}=\emptyset$, $j=1,\ldots,s$.
\end{itemize}
For each $j\in\{1,\ldots,s\}$, pick $\chi_j\in\XX_0(\RR^m)$ such that $0\leq \chi_j\leq 1$, $\supp\chi_j\subseteq \widetilde{G}'_j\backslash B(0_m,1/3)$ and $\chi_j=1$ on $\overline{G'_j}\backslash B(0_m,1/2)$. Choose $\chi'\in\XX_0(\RR^m)$ such that $0\leq \chi'\leq 1$, $\chi'=0$ on $B(0_m,3/4)$ and $\chi'(x)=1$ when $|x|\geq 1$. Set
\begin{equation*}
\widetilde{\chi}_j:=\frac{\chi'\chi\chi_j}{\sum_{l=1}^s\chi_l},\quad j=1,\ldots,s.
\end{equation*}
Then, $\widetilde{\chi}_j\in\XX_0(\RR^m)$, $j=1,\ldots,s$, and $\chi'\chi=\sum_{j=1}^s\widetilde{\chi}_j$. Pick $\phi\in\DD(U_1\cap\widetilde{U}_1)$ such that $\phi=1$ on a neighbourhood of $\hat{f}_0(\supp\varphi)$. Given $u\in\mathcal{C}^{\infty}(\widetilde{U}_0)$, we again write \eqref{est-for-uin-seminormsofcos}. For $\theta\in\SSS(\RR^m)$ satisfying $\|\theta\|_{\mathcal{F}^{-1}L^q(\RR^m)}\leq 1$, we have
\begin{equation}\label{est-for-mai-reskls11}
|\langle \theta,\langle\cdot\rangle^{r_1}\chi\mathcal{F}(\varphi \hat{f}_0^*u)\rangle|\leq |\langle \theta,\langle\cdot\rangle^{r_1}(1-\chi')\chi\mathcal{F}(\varphi \hat{f}_0^*u)\rangle|+\sum_{j=1}^s|\langle \theta,\langle\cdot\rangle^{r_1}\widetilde{\chi}_j\mathcal{F}(\varphi \hat{f}_0^*u)\rangle|.
\end{equation}
We first estimate the terms in the sum. For $j\in\{1,\ldots,s\}$, pick $\psi_j\in\XX_0(\RR^m)$ such that $0\leq \psi_j\leq 1$, $\supp\psi_j\subseteq \widetilde{G}'_j\backslash B(0_m,1/4)$ and $\psi_j=1$ on a neighbourhood of $\supp\chi_j$. Set $\widetilde{\psi}_j:=\psi_j\otimes\mathbf{1}_{\RR^{n-m}}$. In addition, choose $\widetilde{\psi}\in\XX_0(\RR^n)$ such that $0\leq \widetilde{\psi}\leq 1$, $\supp\widetilde{\psi}\subseteq \widetilde{V}_0\backslash B(0_n,1/2)$ and $\widetilde{\psi}=1$ on $\{(\eta',\eta'')\in\RR^n\,|\, |\eta''|>2C_0|\eta'|\}\backslash B(0_n,1)$. Similarly as in \eqref{exp-for-actonsplintskl}, we infer
\begin{align*}
\langle \theta,\langle\cdot\rangle^{r_1}\widetilde{\chi}_j\mathcal{F}(\varphi \hat{f}_0^*u)\rangle& =\frac{1}{(2\pi)^n}\iint_{\RR^n\times\RR^m}\mathcal{F}(\phi u)(\eta)e^{ix\eta'}\varphi(x)\check{\widetilde{\chi}}_j(D)\langle D\rangle^{r_1}\mathcal{F}\theta(x)d\eta dx\\
&=I'_{1,j}+I'_{2,j}+I'_{3,j},
\end{align*}
with
\begin{align*}
I'_{1,j}&:=\frac{1}{(2\pi)^{n-m}}\int_{\RR^n}\mathcal{F}(\phi u)(\eta) \widetilde{\psi}_j(\eta)(1-\widetilde{\psi}(\eta))\mathcal{F}^{-1}(\varphi\check{\widetilde{\chi}}_j(D)\langle D\rangle^{r_1}\mathcal{F}\theta)(\eta')d\eta,\\
I'_{2,j}&:=\frac{1}{(2\pi)^{n-m}}\int_{\RR^n}\mathcal{F}(\phi u)(\eta)\widetilde{\psi}(\eta)\mathcal{F}^{-1}(\varphi\check{\widetilde{\chi}}_j(D)\langle D\rangle^{r_1}\mathcal{F}\theta)(\eta')d\eta,\\
I'_{3,j}&:=\frac{1}{(2\pi)^{n-m}}\int_{\RR^n}\mathcal{F}(\phi u)(\eta) (1-\widetilde{\psi}_j(\eta))(1-\widetilde{\psi}(\eta))\mathcal{F}^{-1}(\varphi\check{\widetilde{\chi}}_j(D)\langle D\rangle^{r_1}\mathcal{F}\theta)(\eta')d\eta.
\end{align*}
We need the following fact.\\
\\
\noindent \textbf{Claim} Let $\delta_d$ be the delta distribution on $\RR^d$. Then $\langle D\rangle^{-\lambda}\delta_d\in L^q(\RR^d)$, $\lambda>d/p$.\\
\\
We defer its proof for later and continue with the proof of the theorem. We first estimate $I'_{1,j}$. Pick $\lambda>(n-m)/p$ such that $r_2-r_1\geq\lambda$ and $r_2\geq\lambda$. The Claim implies that $\langle D_{y''}\rangle^{-\lambda}\delta_{n-m}\in L^q(\RR^{n-m})$. Notice that
\begin{align}
|I'_{1,j}|&\leq (2\pi)^{m-n}\|\langle\cdot\rangle^{r_2}\widetilde{\psi}_j(1-\widetilde{\psi})\mathcal{F}(\phi u)\|_{\mathcal{F}L^p(\RR^n)}\nonumber\\
&\quad\cdot\|\langle\cdot\rangle^{-r_2}(\mathcal{F}^{-1}(\varphi\check{\widetilde{\chi}}_j(D)\langle D\rangle^{r_1}\mathcal{F}\theta)\otimes\mathbf{1}_{\RR^{n-m}})\|_{\mathcal{F}^{-1}L^q(\RR^n)}\nonumber\\
&= \|\langle\cdot\rangle^{r_2}\widetilde{\psi}_j(1-\widetilde{\psi})\mathcal{F}(\phi u)\|_{\mathcal{F}L^p(\RR^n)}\nonumber\\
&\quad\cdot\|\langle D_y\rangle^{-r_2}(\varphi\otimes\mathbf{1}_{\RR^{n-m}})\check{\widetilde{\chi}}_j(D_{y'})\langle D_{y'}\rangle^{r_1}\langle D_{y''}\rangle^{\lambda}(\mathcal{F}\theta\otimes\langle D_{y''}\rangle^{-\lambda}\delta_{n-m})\|_{L^q(\RR^n)}.\label{ine-for-iprjklskr}
\end{align}
We claim that the operator acting on $\mathcal{F}\theta\otimes\langle D_{y''}\rangle^{-\lambda}\delta_{n-m}$ is continuous on $L^q(\RR^n)$. To see this, write it as
\begin{multline*}
\langle D_y\rangle^{-r_2}(\varphi\otimes\mathbf{1}_{\RR^{n-m}})\check{\widetilde{\chi}}_j(D_{y'})\langle D_{y'}\rangle^{r_1}\langle D_{y''}\rangle^{\lambda}\\
=\left(\langle D_y\rangle^{-r_2}(\varphi\otimes\mathbf{1}_{\RR^{n-m}})\langle D_y\rangle^{r_2}\right)\left(\langle D_y\rangle^{-r_2}\check{\widetilde{\chi}}_j(D_{y'})\langle D_{y''}\rangle^{\lambda}\langle D_{y'}\rangle^{r_1}\right).
\end{multline*}
The $\Psi$DO $\langle D_y\rangle^{-r_2}(\varphi\otimes\mathbf{1}_{\RR^{n-m}})\langle D_y\rangle^{r_2}$ has symbol in $S^0_{1,0}(\RR^{2n})$ and hence it is continuous on $L^q(\RR^n)$. The second operator is a Fourier multiplier with symbol $\langle \eta\rangle^{-r_2}\check{\widetilde{\chi}}_j(\eta')\langle \eta''\rangle^{\lambda}\langle \eta'\rangle^{r_1}$. When $r_1\geq 0$, since $r_2\geq r_1+\lambda$, it is straightforward to verify that it satisfies the conditions of the Lizorkin-Marcinkiewicz multiplier theorem \cite[Corollary 6.2.5, p. 444]{Grafakos} (cf. \cite[Theorem 6.2.4, p. 441]{Grafakos}) and hence it is continuous on $L^q(\RR^n)$. When $r_1<0$ then one can check that both $\langle \eta'\rangle^{r_1}$ and $\langle \eta\rangle^{-r_2}\check{\widetilde{\chi}}_j(\eta')\langle \eta''\rangle^{\lambda}$ satisfy the conditions of the Lizorkin-Marcinkiewicz multiplier theorem (since $r_2\geq \lambda$) and thus both are continuous on $L^q(\RR^n)$. We deduce
\begin{equation*}
|I'_{1,j}|\leq C'_2\|\langle\cdot\rangle^{r_2}\widetilde{\psi}_j(1-\widetilde{\psi})\mathcal{F}(\phi u)\|_{\mathcal{F}L^p(\RR^n)}\|\mathcal{F}\theta\|_{L^q(\RR^m)}\|\langle D\rangle^{-\lambda}\delta_{n-m}\|_{L^q(\RR^{n-m})}\leq C'_3\mathfrak{p}_{r_2,p;\phi,\widetilde{\psi}_j(1-\widetilde{\psi})}(u)
\end{equation*}
and $\mathfrak{p}_{r_2,p;\phi,\widetilde{\psi}_j(1-\widetilde{\psi})}$ is indeed a well-defined continuous seminorm on $\DD'^{r_2,p}_{\widetilde{L}}(\widetilde{U}_0)$ in view of $(iv)'$ and the fact that $\widetilde{\psi}_j(1-\widetilde{\psi})\in\XX_0(\RR^n)$ (this holds in view of the properties of $\widetilde{\psi}$). With $\lambda$ as above, one analogously obtains
\begin{align*}
|I'_{2,j}|&\leq \|\langle\cdot\rangle^{r_2}\widetilde{\psi}\mathcal{F}(\phi u)\|_{\mathcal{F}L^p(\RR^n)}\\
&{}\quad\cdot\|\langle D_y\rangle^{-r_2}(\varphi\otimes\mathbf{1}_{\RR^{n-m}})\check{\widetilde{\chi}}_j(D_{y'})\langle D_{y'}\rangle^{r_1}\langle D_{y''}\rangle^{\lambda}(\mathcal{F}\theta\otimes\langle D_{y''}\rangle^{-\lambda}\delta_{n-m})\|_{L^q(\RR^n)}\\
&\leq C'_3\mathfrak{p}_{r_2,p;\phi,\widetilde{\psi}}(u)
\end{align*}
and $\mathfrak{p}_{r_2,p;\phi,\widetilde{\psi}}$ is well-defined continuous seminorm on $\DD'^{r_2,p}_{\widetilde{L}}(\widetilde{U}_0)$ in view of $(iii)'$. Finally, we show that $I'_{3,j}$ is bounded by a continuous seminorm on $\DD'(\widetilde{U}_0)$ of $u$. Fix $\lambda>(n-m)/p$. It suffices to prove that
\begin{multline*}
\DD'(\widetilde{U}_0)\rightarrow[0,\infty),\,\, v\mapsto \sup_{\theta\in\SSS(\RR^m),\|\theta\|_{\mathcal{F}^{-1}L^q(\RR^m)}\leq 1} |\langle v, \phi(\operatorname{Id}-\check{\widetilde{\psi}}_j(D_y))(\operatorname{Id}-\check{\widetilde{\psi}}(D_y))(\varphi\otimes\mathbf{1}_{\RR^{n-m}})\\
\check{\widetilde{\chi}}_j(D_{y'})\langle D_{y'}\rangle^{r_1}\langle D_{y''}\rangle^{\lambda}(\mathcal{F}\theta\otimes\langle D_{y''}\rangle^{-\lambda}\delta_{n-m})\rangle|,
\end{multline*}
is a well-defined continuous seminorm on $\DD'(\widetilde{U}_0)$. As before, it is enough to show that $\{(\operatorname{Id}-\check{\widetilde{\psi}}_j(D_y))(\operatorname{Id}-\check{\widetilde{\psi}}(D_y))(\varphi\otimes\mathbf{1}_{\RR^{n-m}})
\check{\widetilde{\chi}}_j(D_{y'})\langle D_{y'}\rangle^{r_1}\langle D_{y''}\rangle^{\lambda}(\mathcal{F}\theta\otimes\langle D_{y''}\rangle^{-\lambda}\delta_{n-m})\,|\, \theta\in\SSS(\RR^m),\, \|\mathcal{F}\theta\|_{L^q(\RR^m)}\leq 1\}$ is a bounded subset of $\DD_{L^q}(\RR^n)$. We are going to show that $(\operatorname{Id}-\check{\widetilde{\psi}}_j(D_y))(\operatorname{Id}-\check{\widetilde{\psi}}(D_y))(\varphi\otimes\mathbf{1}_{\RR^{n-m}})
\check{\widetilde{\chi}}_j(D_{y'})\langle D_{y'}\rangle^{r_1}\langle D_{y''}\rangle^{\lambda}$ is a $\Psi$DO with symbol in $S^{-\infty}(\RR^{2n})$ which would immediately imply this. The properties of $\widetilde{\psi}$ yield that $(\operatorname{Id}-\check{\widetilde{\psi}}_j(D_y))(\operatorname{Id}-\check{\widetilde{\psi}}(D_y))(\varphi\otimes\mathbf{1}_{\RR^{n-m}})$ is a $\Psi$DO with symbol in $S^0_{1,0}(\RR^{2n})$ (in fact, $(1-\widetilde{\psi}_j)(1-\widetilde{\psi})\in\XX(\RR^n)$). Hence, for $N\in\ZZ_+$, the asymptotic expansion of the composition gives
\begin{multline*}
(\operatorname{Id}-\check{\widetilde{\psi}}_j(D_y))(\operatorname{Id}-\check{\widetilde{\psi}}(D_y))(\varphi\otimes\mathbf{1}_{\RR^{n-m}})
\check{\widetilde{\chi}}_j(D_{y'})\langle D_{y'}\rangle^{r_1}\langle D_{y''}\rangle^{\lambda}\\
=\left(\sum_{|\beta|=0}^{N-1}\beta!^{-1}D^{\beta}_y\varphi(y') \partial^{\beta}_{\eta}\left((1-\check{\widetilde{\psi}}_j)(1-\check{\widetilde{\psi}})\right)(D_y)+b_N(y,D_y)\right) \check{\widetilde{\chi}}_j(D_{y'})\langle D_{y'}\rangle^{r_1}\langle D_{y''}\rangle^{\lambda},
\end{multline*}
with $b_N\in S^{-N}_{1,0}(\RR^{2n})$. Since $\supp(1-\check{\widetilde{\psi}}_j)\cap\supp(\check{\widetilde{\chi}}_j\otimes\mathbf{1}_{\RR^{n-m}})=\emptyset$, the above equals $b_N(y,D_y)\check{\widetilde{\chi}}_j(D_{y'})\langle D_{y'}\rangle^{r_1}\langle D_{y''}\rangle^{\lambda}$ which is a $\Psi$DO with symbol in $S^{-N+\max\{r_1,0\}+\lambda}_{0,0}(\RR^{2n})$. As $N$ was arbitrary, we deduce the desired claim.\\
\indent We now address the first term on the right in \eqref{est-for-mai-reskls11}. Notice that
\begin{multline}\label{equ-for-semofpulbkhsfs}
\sup_{\theta\in\SSS(\RR^m),\, \|\theta\|_{\mathcal{F}^{-1}L^q(\RR^m)}\leq1}|\langle \theta,\langle\cdot\rangle^{r_1}(1-\chi')\chi\mathcal{F}(\varphi \hat{f}_0^*u)\rangle|\\
=\sup_{\theta\in\SSS(\RR^m),\, \|\theta\|_{\mathcal{F}^{-1}L^q(\RR^m)}\leq1}|\langle \varphi\langle D\rangle^{r_1}((1-\check{\chi}')\check{\chi})(D)\mathcal{F}\theta,\hat{f}_0^*u\rangle|.
\end{multline}
Since $(1-\check{\chi}')\check{\chi}$ has compact support, the Fourier multiplier $\langle D\rangle^{r_1}((1-\check{\chi}')\check{\chi})(D)$ considered as a $\Psi$DO has symbol in $S^{-\infty}(\RR^{2m})$ and consequently $\{\langle D\rangle^{r_1}((1-\check{\chi}')\check{\chi})(D)\mathcal{F}\theta\,|\, \theta\in\SSS(\RR^m),\, \|\mathcal{F}\theta\|_{L^q(\RR^m)}\leq 1\}$ is a bounded subset of $\DD_{L^q}(\RR^m)$, which, in turn, implies that $\{\varphi\langle D\rangle^{r_1}((1-\check{\chi}')\check{\chi})(D)\mathcal{F}\theta\,|\, \theta\in\SSS(\RR^m),\, \|\mathcal{F}\theta\|_{L^q(\RR^m)}\leq 1\}$ is a bounded subset of $\DD(\widetilde{O}_0)$. We infer that \eqref{equ-for-semofpulbkhsfs} is a continuous seminorm on $\DD'(\widetilde{O}_0)$ of $\hat{f}^*_0u$. Hence, we need to bound $\mathfrak{p}(\hat{f}^*_0u)$ with $\mathfrak{p}$ a continuous seminorm on $\DD'(\widetilde{O}_0)$. Incidentally, besides completing the estimates for $\mathfrak{p}_{r_1,p;\varphi,\chi}(\hat{f}^*_0u)$, this will also complete the proof of $(*)$ when $k=m$ since the seminorms of the form $\mathfrak{p}(\hat{f}^*_0u)$, $\mathfrak{p}$ continuous on $\DD'(\widetilde{O}_0)$, are the only remaining seminorms we need to estimate. Without loss in generality, we can assume that $\mathfrak{p}(\hat{f}^*_0u)=\sup_{\chi\in B}|\langle \hat{f}^*_0u,\chi\rangle|$ where $B$ is a bounded subset of $\DD_K(\RR^m)$ for some compact set $K\subseteq \widetilde{O}_0$. Since $L\cap \mathcal{N}_{\hat{f}_0}=\emptyset$ and in view of \eqref{nor-for-map-injec111111}, we can argue as in the proof of \cite[Theorem 3.21, CASE 3]{PP} to find an open set $\widetilde{U}_K\subseteq \widetilde{U}_0$ satisfying $\hat{f}_0(K)\subseteq \widetilde{U}_K$ and $C_K>1$ such that $\widetilde{V}_K:=\{(\eta',\eta'')\in\RR^n\,|\, |\eta''|>C_K|\eta'|\}$ satisfies $(\widetilde{U}_K\times \overline{\widetilde{V}_K})\cap \widetilde{L}=\emptyset$. Pick $\phi\in\DD(\widetilde{U}_K)$ satisfying $\phi=1$ on a neighbourhood of $\hat{f}_0(K)$. Choose $\psi\in \XX_0(\RR^n)$ such that $0\leq \psi\leq1$, $\supp\psi\subseteq \widetilde{V}_K\backslash B(0_n,1/2)$ and $\psi=1$ on $\{(\eta',\eta'')\in\RR^n\,|\, |\eta''|>2C_K|\eta'|\}\backslash B(0_n,1)$. Write
\begin{align*}
\sup_{\chi\in B}|\langle \hat{f}^*_0u,\chi\rangle|&=\sup_{\chi\in B}|\langle \hat{f}^*_0(\phi u),\chi\rangle|=\sup_{\chi\in B} \frac{1}{(2\pi)^n}\left|\iint_{\RR^n\times\RR^m}\mathcal{F}(\phi u)(\eta)e^{ix\eta'}\chi(x)d\eta dx\right|\\
&\leq I''_1+I''_2,
\end{align*}
where
\begin{align*}
I''_1&:=\sup_{\chi\in B} \frac{1}{(2\pi)^{n-m}}\left|\int_{\RR^n}\mathcal{F}(\phi u)(\eta)\psi(\eta)\mathcal{F}^{-1}\chi(\eta')d\eta\right|,\\
I''_2&:=\sup_{\chi\in B} \frac{1}{(2\pi)^{n-m}}\left|\int_{\RR^n}\mathcal{F}(\phi u)(\eta)(1-\psi(\eta))\mathcal{F}^{-1}\chi(\eta')d\eta\right|.
\end{align*}
Pick $\lambda>(n-m)/p$ such that $r_2\geq \lambda$. As in \eqref{ine-for-iprjklskr}, for $I''_1$ we infer
\begin{equation*}
I''_1\leq \|\langle\cdot\rangle^{r_2}\psi\mathcal{F}(\phi u)\|_{\mathcal{F}L^p(\RR^n)}\sup_{\chi\in B}\|\langle D_y\rangle^{-r_2}\langle D_{y''}\rangle^{\lambda}(\chi\otimes \langle D_{y''}\rangle^{-\lambda}\delta_{n-m})\|_{L^q(\RR^n)}.
\end{equation*}
In view of the Claim, $\langle D_{y''}\rangle^{-\lambda}\delta_{n-m}\in L^q(\RR^{n-m})$. The symbol of the Fourier multiplier $\langle D_y\rangle^{-r_2}\langle D_{y''}\rangle^{\lambda}$ is $\langle \eta\rangle^{-r_2}\langle \eta''\rangle^{\lambda}$ and it is straightforward to check that it satisfies the assumptions of the Lizorkin-Marcinkiewicz multiplier theorem \cite[Corollary 6.2.5, p. 444]{Grafakos}. Consequently, it is continuous on $L^q(\RR^n)$ which implies $I''_1\leq C'_4\mathfrak{p}_{r_2,p;\phi,\psi}(u)$ and the latter is a well-defined continuous seminorm on $\DD'^{r_2,p}_{\widetilde{L}}(\widetilde{U}_0)$ in view of the properties of $\psi$. We claim that $I''_2$ is a continuous seminorm on $\DD'(\widetilde{U}_0)$ of $u$. For this, it suffices to prove that
\begin{align*}
\DD'(\widetilde{U}_0)\rightarrow[0,\infty),\quad v\mapsto\sup_{\chi\in B}|\langle v,\phi(\operatorname{Id}-\check{\psi}(D_y))\langle D_{y''}\rangle^{\lambda}(\chi\otimes\langle D_{y''}\rangle^{-\lambda}\delta_{n-m})\rangle|,
\end{align*}
is a well-defined continuous seminorm on $\DD'(\widetilde{U}_0)$. We are going to show that $\{(\operatorname{Id}-\check{\psi}(D_y))\langle D_{y''}\rangle^{\lambda}(\chi\otimes\langle D_{y''}\rangle^{-\lambda}\delta_{n-m})\,|\, \chi\in B\}$ is a bounded subset of $\DD_{L^q}(\RR^n)$ which, as before, proves the desired claim. For $\beta\in\NN^n$, we infer
\begin{multline*}
\partial^{\beta}_y(\operatorname{Id}-\check{\psi}(D_y))\langle D_{y''}\rangle^{\lambda}(\chi\otimes\langle D_{y''}\rangle^{-\lambda}\delta_{n-m})\\
=\partial^{\beta}_y(\operatorname{Id}-\check{\psi}(D_y))\langle D_{y'}\rangle^{-|\beta|-\lambda}\langle D_{y''}\rangle^{\lambda}(\langle D_{y'}\rangle^{|\beta|+\lambda}\chi\otimes\langle D_{y''}\rangle^{-\lambda}\delta_{n-m}).
\end{multline*}
The symbol of the Fourier multiplier is $i^{|\beta|}\eta^{\beta}(1-\check{\psi}(\eta))\langle \eta'\rangle^{-|\beta|-\lambda}\langle\eta''\rangle^{\lambda}$ and it straightforward to verify that it satisfies the assumptions of the Lizorkin-Marcinkiewicz multiplier theorem \cite[Corollary 6.2.5, p. 444]{Grafakos}. Consequently, it is continuous on $L^q(\RR^n)$, which immediately implies the desired result. This completes the proof of the case $k=m$.\\
\indent \underline{The case when $k\in\ZZ_+$.} Write $\hat{f}_0=\hat{f}_2\circ\hat{f}_1$ where $\hat{f}_1:\widetilde{O}_0\rightarrow\hat{f}_1(\widetilde{O}_0)\subseteq \RR^k$, $\hat{f}_1(x',x'')=x'$, is surjective and $\hat{f}_2:\hat{f}_1(\widetilde{O}_0)\rightarrow \widetilde{U}_0$, $\hat{f}_2(x')=(x',0_{n-k})$, is injective. Notice that $\hat{f}^*_0=\hat{f}^*_1\hat{f}^*_2$ both on the spaces of smooth functions as well as on the spaces of covectors. In view of the second case $\hat{f}^*_2:\DD'^{r_2,p}_{\widetilde{L}}(\widetilde{U}_0)\rightarrow\DD'^{r_1,p}_{\hat{f}^*_2\widetilde{L}}(\hat{f}_1(\widetilde{O}_0))$ is well-defined and continuous since $\mathcal{N}_{\hat{f}_2}\cap \widetilde{L}=\mathcal{N}_{\hat{f}_0}\cap \widetilde{L}=\emptyset$ and $r_2-r_1>(n-k)/p$, $r_2>(n-k)/p$. In view of the first case $\hat{f}^*_1:\DD'^{r_1,p}_{\hat{f}^*_2\widetilde{L}}(\hat{f}_1(\widetilde{O}_0))\rightarrow \DD'^{r_1,p}_{\hat{f}^*_0\widetilde{L}}(\widetilde{O}_0)$ is well-defined and continuous since $\mathcal{N}_{\hat{f}_1}\cap \hat{f}^*_2\widetilde{L}=(\hat{f}_1(\widetilde{O}_0)\times\{0_k\})\cap \hat{f}^*_2\widetilde{L}=\emptyset$. This completes the proof.\\
\\
\noindent \textbf{Proof of Claim.} In the proof of \cite[Proposition A.1]{PP} (see the identity \cite[(A.2)]{PP}) it is shown that for any $\lambda>0$, $\langle D\rangle^{-\lambda}\delta_d\in L^1(\RR^d)$ and it is given by
\begin{equation*}
(\langle D\rangle^{-\lambda}\delta_d)(x)=\frac{1}{\Gamma(\lambda/2)2^d\pi^{d/2}}\int_0^{\infty}t^{(\lambda-d-2)/2}e^{-t} e^{-|x|^2/(4t)} dt,\quad x\in\RR^d\backslash\{0\}
\end{equation*}
(cf. \cite[Proposition 2, p. 132]{stein}). When $\lambda>d/p$, the Minkowski integral inequality implies
\begin{align*}
\|\langle D\rangle^{-\lambda}\delta_d\|_{L^q(\RR^d)}&\leq \frac{1}{\Gamma(\lambda/2)2^d\pi^{d/2}}\int_0^{\infty}t^{(\lambda-d-2)/2}e^{-t} \left(\int_{\RR^d}e^{-q|x|^2/(4t)}dx\right)^{1/q} dt\\
&=\frac{2^{d/q}\|e^{-|\cdot|^2}\|_{L^1(\RR^d)}^{1/q}}{\Gamma(\lambda/2)2^d\pi^{d/2}q^{d/(2q)}}\int_0^{\infty}t^{(q\lambda-qd-2q+d)/(2q)}e^{-t} dt<\infty,
\end{align*}
since $(q\lambda-qd-2q+d)/(2q)>-1$ and the proof of the claim is complete.
\end{proof}

The typical application of the result is for extending elements in $\DD'^{r,p}_L(U)$ to open sets in higher dimension which corresponds to the pullback by the projection of the higher dimensional open set onto $U$. The other application is to restricting elements in $\DD'^{r,p}_L(U)$ to lower dimensional hyperplanes which corresponds to the pullback by the imbedding of the hyperplane in $U$; notice that in this case the theorem states that there is a loss in $L^p$-Sobolev regularity of $(n-\operatorname{dim}(\mbox{plane}))/p$ and can be done when $r>(n-\operatorname{dim}(\mbox{plane}))/p$.


\begin{thebibliography}{999}

\bibitem{adams} R. A. Adams, J. J. F. Fournier, \textit{Sobolev spaces}, Academic press, 2003.

\bibitem{D1} C. Brouder, N. V. Dang, F. H\'elein, \textit{Continuity of the fundamental operations on distributions having a specified wave front set (with a counterexample by Semyon Alesker)}, Stud. Math. 232(3) (2016), 201-226.

\bibitem{DB} Y. Dabrowski, C. Brouder, \textit{Functional properties of H\"omander’s space of distributions having a specified wavefront set}, Commun. Math. Phys. 332 (2014), 1345-1380.

\bibitem{dap-r-scl} C. Dappiaggi, P. Rinaldi, F. Sclavi, \textit{Besov wavefront set}, Anal. Math. Phys. 13(6) (2023), Paper No. 95.

\bibitem{DPV} P. Dimovski, S. Pilipovi\'c, J. Vindas, \textit{New distribution spaces associated to translation-invariant Banach spaces},
Monatsh. Math. 177(4) (2015), 495–515.

\bibitem{DPP} P. Dimovski, S. Pilipovi\'c, B. Prangoski, \textit{On a class of Mikhlin multipliers which do not preserve $L^1$-, $L^{\infty}$-regularity and continuity}, J. Math. Anal. Appl. 553(1) (2026), Article ID 129971.

\bibitem{DPPV-TMIB} P. Dimovski, S. Pilipovi\'c, B. Prangoski, J. Vindas, \textit{Translation-modulation invariant Banach spaces of ultradistributions}, J. Fourier Anal. Appl. 25(3) (2019), 819–841.

\bibitem{fefferman} C. Fefferman, \textit{$L^p$ bounds for pseudo-differential operators}, Isr. J. Math. 14 (1973), 413-417.

\bibitem{Grafakos} L. Grafakos, \emph{Classical Fourier analysis}, third edition, Springer, New York, 2014.

\bibitem{hormander} L. H\"ormander, \textit{Fourier integral operators. I}, Acta Math. 127 (1971), 79-183.

\bibitem{hor} L. H\"ormander, \textit{The analysis of linear partial differential operators I. Distribution theory and fourier analysis}, Springer, 2003.

\bibitem{hor2} L. H\"ormander, \textit{The analysis of linear partial differential operators III. Pseudo-differential operators}, Springer, 2007.

\bibitem{lee} J. M. Lee, \textit{Introduction to Smooth Manifolds}, Springer, New York, 2013.

\bibitem{PP} S. Pilipovi\'c, B. Prangoski, \textit{Spaces of distributions with Sobolev wave front in a fixed conic set: compactness, pullback by smooth maps and the compensated compactness theorem}, preprint, arXiv:2408.10741

\bibitem{triebel} H. Triebel, \textit{Theory of function spaces}, Springer, Basel, 2010

\bibitem{sawano} Y. Sawano, \textit{Theory of Besov spaces}, Springer, Singapore, 2018.

\bibitem{stein} E. M. Stein, \textit{Singular integrals and differentiability properties of functions}, Princeton University Press, Princeton, New Jersey, 1970.

\bibitem{Tutic} S. Tuti\'c, \textit{On the continuity of the product of distributions in local Sobolev spaces}, J. Pseudo-Differ. Oper. Appl. 17(2) (2026), Paper No. 30.
    
\end{thebibliography}
\end{document}